\numberwithin{figure}{section}
\numberwithin{table}{section}
\newtheorem{theorem}{Theorem}[section]
\newtheorem{lemma}[theorem]{Lemma}
\newtheorem{prop}[theorem]{Proposition}
\theoremstyle{definition}
\newtheorem{definition}[theorem]{Definition}
\newtheorem{example}[theorem]{Example}
\newtheorem{cor}[theorem]{Corollary}
\theoremstyle{remark}
\newtheorem{remark}[theorem]{Remark}
\numberwithin{equation}{section}
\def \h{{\mathfrak h}}
\def \Q{{\mathbb Q}}
\def \A{{\mathfrak A}}
\def \[{[ }
\def \]{] }
\def\t{\widetilde}
\def\mr{\mathrm}
\def\h{\widehat}
\def \l{\langle }
\def \r{\rangle }
\def \B{{\mathcal B}}
\begin{document}

\title[Cluster algebras of finite mutation type via unfoldings]{Cluster algebras of finite mutation type\\ via unfoldings}
\author{Anna Felikson}
\address{Independent University of Moscow, B. Vlassievskii 11, 119002 Moscow, Russia}
\curraddr{School of Engineering and Science, Jacobs University Bremen, Campus Ring 1, D-28759, Germany}
\email{felikson@mccme.ru}
\thanks{Research of Michael Shapiro was supported by grants DNS 0800671 and PHY 0555346}

\author{Michael Shapiro}
\address{Department of Mathematics, Michigan State University, East Lansing, MI 48824, USA}
\email{mshapiro@math.msu.edu}

\author{Pavel Tumarkin}
\address{School of Engineering and Science, Jacobs University Bremen, Campus Ring 1, D-28759, Germany}
\email{p.tumarkin@jacobs-university.de}

\begin{abstract}
We complete classification of mutation-finite cluster algebras by extending the technique derived by Fomin, Shapiro, and Thurston to skew-symmetrizable case. We show that for every mutation-finite skew-symmetrizable matrix a diagram characterizing the matrix admits an {\it unfolding} which embeds its mutation class to the mutation class of some mutation-finite skew-symmetric matrix. In particular, this establishes a correspondence between a large class of skew-symmetrizable mutation-finite cluster algebras and triangulated marked bordered surfaces.
\end{abstract}
\maketitle

\setcounter{tocdepth}{1}

\tableofcontents

\section{Introduction}
\label{intro}
In the present paper, we continue investigation of cluster algebras of finite mutation type started in~\cite{FST1}.

Cluster algebras were introduced by Fomin and Zelevinsky in the series of papers~\cite{FZ1},~\cite{FZ2},~\cite{BFZ3},
~\cite{FZ4}. Up to isomorphism, each cluster algebra is defined by a \emph{skew-symmetrizable} $n\times n$ integer matrix called {\it exchange matrix}, where integer matrix $B$ is skew-symmetrizable if there exists an integer diagonal $n\times n$ matrix $D$ such that $BD$ is skew-symmetric.
Exchange matrices admit {\it mutations} (see~\ref{eq:MatrixMutation}).
Collection of all exchange matrices of a cluster algebra form a {\it mutation class} of exchange matrices.

In~\cite{FST1}, we classified all the \emph{skew-symmetric} exchange matrices with finite mutation class. In this paper, we complete classification of finite mutation classes of exchange matrices by presenting an answer in full generality.

The method we use is based on the following two main tools. The first main tool is the technique of {\it block decompositions} introduced by Fomin, Shapiro, and Thurston in~\cite{FST}. The results of~\cite{FST1} are primary based on application of this technique. We combine this technique with studying of {\it diagrams} associated to skew-symmetrizable matrices defined by Fomin and Zelevinsky in~\cite{FZ2} by introducing {\it s-decomposable diagrams}. The second main tool is a counterpart of the {\it unfolding procedure} introduced by Lusztig in~\cite{L} for generalized Cartan matrices. Using the unfolding procedure, we assign to each diagram of a mutation-finite skew-symmetrizable matrix a mutation-finite quiver. Due to results of~\cite{FST} and~\cite{FST1}, this allows us to relate a large class of skew-symmetrizable mutation-finite matrices with $2$-dimensional bordered marked surfaces.

We prove the following theorem (the precise definitions will be given in Sections~\ref{cluster} and~\ref{blockdecomp}).
 
\setcounter{section}{5}
\setcounter{theorem}{12}
\begin{theorem}
\setcounter{section}{1}
\setcounter{theorem}{0}

A skew-symmetrizable $n\times n$ matrix, $n\ge 3$, that is not skew-symmetric, has finite mutation class if and only if its diagram is either s-decomposable or mutation-equivalent to one of the seven types $\t G_2$, $F_4$, $\t F_4$, $G_2^{(*,+)}$, $G_2^{(*,*)}$, $F_4^{(*,+)}$, $F_4^{(*,*)}$ shown on Fig.~\ref{allfign}.

\begin{figure}[!h]
\begin{center}
\psfrag{2}{\scriptsize $2$}
\psfrag{2-}{\scriptsize $2$}
\psfrag{3}{\scriptsize $3$}
\psfrag{4}{\scriptsize $4$}
\psfrag{G}{$\t G_2$}
\psfrag{F}{$F_4$}
\psfrag{Ft}{$\t F_4$}
\psfrag{W}{$G_2^{(*,*)}$}
\psfrag{V}{$G_2^{(*,+)}$}
\psfrag{Y6}{$F_4^{(*,+)}$}
\psfrag{Z}{$F_4^{(*,*)}$}
\epsfig{file=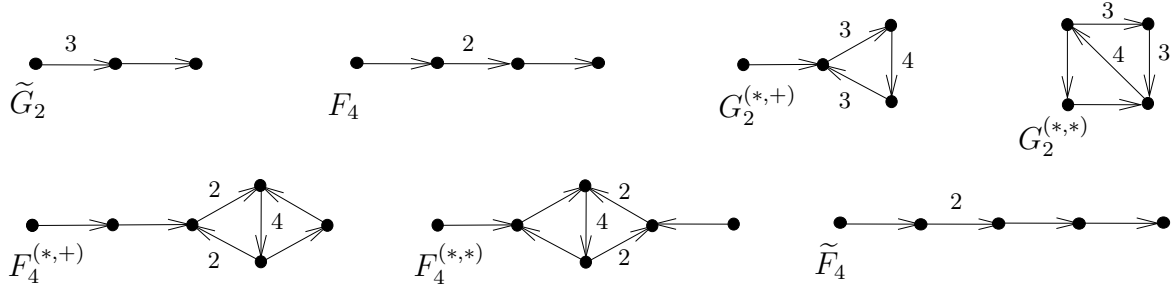,width=1.0\linewidth}
\caption{Non-decomposable mutation-finite non-skew-symmetric diagrams of order at least $3$}
\label{allfign}
\end{center}
\end{figure}

\end{theorem}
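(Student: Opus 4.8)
The strategy is to reduce the skew-symmetrizable classification to the already-known skew-symmetric one via the unfolding procedure. First I would establish the "if" direction: for each of the two families on the right-hand side I must exhibit that the mutation class is finite. For an s-decomposable diagram this follows directly from the block-decomposition machinery of \cite{FST}, \cite{FST1}, exactly as in the skew-symmetric case — the finitely many block types glue along outlets, and mutations performed inside the surface model stay within a finite set; one only needs to check that the s-blocks (the skew-symmetrizable analogues of the blocks of \cite{FST}) are closed under the relevant local mutations. For the seven sporadic diagrams $\t G_2, F_4, \t F_4, G_2^{(*,+)}, G_2^{(*,*)}, F_4^{(*,+)}, F_4^{(*,*)}$, finiteness is checked by a direct (finite) computation of the mutation class of each — this is a bounded search and I would simply cite the verification, possibly organised by producing an explicit unfolding of each into a mutation-finite quiver (e.g. an affine or sporadic one from the list of \cite{FST1}) and invoking the fact that a diagram whose unfolding is mutation-finite is itself mutation-finite.

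The substance is the "only if" direction. Suppose $B$ is skew-symmetrizable, not skew-symmetric, of size $n\ge 3$, with finite mutation class, and let $S$ be its diagram. Since $B$ is not skew-symmetric, $S$ contains at least one edge whose weight is not of the form $k^2$, i.e. a "non-symmetric" edge with label coming from a Cartan integer $>1$ on one side; the possible local weights are tightly constrained because mutation-finiteness already forbids weights $\ge 4$ on any edge save in the rank-$2$ case, and forbids certain oriented configurations. The first step is a local analysis: classify, up to mutation, the subdiagrams on $\le 4$ vertices that can occur in a mutation-finite non-skew-symmetric diagram. Here I would lean on the characterisation of mutation-finite diagrams of small rank and on the observation that any $3$-vertex subdiagram must itself be mutation-finite, which already cuts the list of admissible "chunks" down to a handful: the finite ones $B_3/C_3$, the affine $\t G_2$, $\t C_3 $-type pieces, and the decorated $G_2$/$F_4$ pieces. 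The second step is a propagation/extension argument: show that if $S$ is not one of the seven sporadic diagrams, then every minimal non-s-decomposable configuration leads to a mutation-finite contradiction unless $S$ is globally s-decomposable — that is, one proves that the only ways to attach further vertices to an admissible core either (a) force a forbidden subdiagram (weight $\ge 4$, or one of the known mutation-infinite rank-$3$ or rank-$4$ diagrams from \cite{FST1}), or (b) realise $S$ as built from s-blocks.

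The cleanest way to organise the propagation is by the unfolding: to each admissible local piece I associate its unfolding, a skew-symmetric diagram $\hat S$ with a group action whose quotient is $S$; mutation-finiteness of $S$ is then controlled by (though not equivalent to) that of $\hat S$, and I can import the full classification of mutation-finite quivers from \cite{FST1}. So the real engine is: (i) show an unfolding exists for any mutation-finite non-skew-symmetric diagram — this is essentially the content of the theorem's phrasing and is proved by exhibiting the unfolding on each s-block and each sporadic diagram and checking it is compatible with mutation (the "unfolding commutes with mutation" lemma); (ii) use the \cite{FST1} list to rule out everything else. I expect the main obstacle to be step (ii) combined with the bookkeeping of step one: the case analysis of how non-symmetric edges can sit inside a mutation-finite diagram is delicate because mutation can move a non-symmetric edge around and can change which subdiagram is "visible", so one must argue with mutation-invariant quantities (the skew-symmetrizing diagonal $D$, hence the multiset of edge weights up to the $\Z$-action) and be careful that a locally innocuous piece does not globally blow up the class. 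Concretely, the hardest single point is showing that a diagram containing a $\t G_2$ or $F_4$ piece that is \emph{not} one of the seven sporadic diagrams must contain a mutation-infinite rank-$\le 4$ subdiagram; this is where the explicit finite mutation-class computations and the affine/sporadic trichotomy of \cite{FST1} do the heavy lifting.
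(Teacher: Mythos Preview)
Your ``if'' direction is fine in spirit, though note that in the paper the finiteness of s-decomposable diagrams is proved \emph{directly} by block combinatorics (Lemma~3.2 and Theorem~3.5), not via an unfolding; unfoldings are used afterwards to relate s-decomposable diagrams to surfaces, not to establish mutation-finiteness.

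The ``only if'' direction has a genuine gap: your proposed engine is circular. You want to first build an unfolding $\hat S$ of an arbitrary mutation-finite non-skew-symmetric $S$ and then read off the answer from the list in \cite{FST1}. But in the paper the existence of an unfolding for a general mutation-finite $S$ (Theorem~6.1) is a \emph{consequence} of the classification, not an input to it: for the non-decomposable diagrams the unfoldings are produced one by one from Table~6.3 \emph{after} one already knows the seven sporadic classes. There is no independent construction of an unfolding for an unknown mutation-finite diagram, and even if you had one, you give no mechanism for deducing ``$S$ is s-decomposable'' from ``$\hat S$ is block-decomposable''; that implication is not automatic.

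The paper's actual route is entirely on the diagram side and does not use unfoldings at all. First it bounds the order of a minimal non-s-decomposable diagram by $7$ (Theorem~5.1), by redoing the block-by-block case analysis of \cite[Theorem~5.2]{FST1} with the new s-blocks (this is the Appendix). Then a finite computer search over diagrams of order $\le 7$ yields exactly four minimal non-decomposable mutation-finite classes: $E_6$, $X_6$, $\tilde G_2$, $F_4$ (Theorem~5.2). Finally, a step-by-step extension argument (orders $8$ through $11$, using that each vertex of $\tilde E_k$, $E_k^{(1,1)}$, $X_7$ lies in a copy of $E_6$, $E_7$, $\tilde E_6$, etc.) shows that any non-decomposable mutation-finite diagram of order $\ge 7$ is skew-symmetric, hence already on the \cite{FST1} list. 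The point you flag as ``hardest'' --- that a diagram properly containing $\tilde G_2$ or $F_4$ of order $\ge 7$ is mutation-infinite --- is handled there simply by the computer enumeration at order $7$ (Remark~5.5), not by an unfolding argument.

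A minor correction: mutation-finiteness for order $\ge 3$ forbids edge weights strictly greater than $4$, not $\ge 4$; weight $4$ does occur (Theorem~2.8).
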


\noindent
{\em Remark.\ }\  
The diagrams $G_2^{(*,+)}$, $G_2^{(*,*)}$, $F_4^{(*,+)}$, and $F_4^{(*,*)}$ are, actually, diagrams of extended affine root systems (see~\cite{Sa}). Each of them corresponds to two extended affine root systems: $G_2^{(*,+)}$ corresponds to root systems $G_2^{(1,3)}$ and $G_2^{(3,1)}$ (whose matrices are mutation-equivalent), $F_4^{(*,+)}$ corresponds to root systems $F_4^{(1,2)}$ and $F_4^{(2,1)}$ (whose matrices are also mutation-equivalent up to change of all signs), $G_2^{(*,*)}$ corresponds to root systems $G_2^{(1,1)}$ and $G_2^{(3,3)}$, and $F_4^{(*,*)}$ corresponds to root systems $F_4^{(1,1)}$ and $F_4^{(2,2)}$ (see Table~\ref{non} and~\cite[Table~1]{Sa}).

We recall that mutation class of any $2\times 2$ skew-symmetrizable matrix is finite.

Combined with results of~\cite{FST1}, Theorem~\ref{all-s} completes the classification of muta\-tion-finite skew-symmetrizable matrices.

Using Theorem~\ref{all-s}, we prove the following theorem.

\setcounter{section}{6}
\begin{theorem}
Any s-decomposable diagram admits an unfolding to a diagram arising from ideal tagged triangulation of a marked bordered surface.
Any mutation-finite matrix with non-decomposable diagram admits an unfolding to a mutation-finite skew-symmetric matrix. 
\end{theorem}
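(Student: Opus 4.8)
The statement has two clauses, addressed by separate arguments; combined with Theorem~\ref{all-s} they account for every mutation-finite non-skew-symmetric matrix of order at least $3$.

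\emph{The s-decomposable case.} The plan is to unfold an s-decomposition block by block and then glue the pieces. First I would fix, for each of the finitely many types of s-blocks occurring in the definition of s-decomposability, an explicit \emph{local unfolding}: an auxiliary quiver together with a partition of its vertices into fibers over the vertices of the block, the fiber over a vertex $v$ having size equal to the entry at $v$ of the minimal skew-symmetrizer. These local unfoldings are to be chosen (using the list of s-blocks) so that every outlet receives a singleton fiber — which is possible because an outlet always carries skew-symmetrizer entry $1$ — and so that each local quiver is again block-decomposable in the sense of~\cite{FST}. For skew-symmetric s-blocks this is trivial; for the non-skew-symmetric ones, of which there are few, it is a finite check. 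Given an s-decomposition of a diagram $S$ as a union of s-blocks identified along outlets, I would then glue the corresponding local quivers along the singleton fibers lying over the identified outlets. Because those fibers are singletons, the gluing is literally a gluing of ordinary blocks at vertices, so the resulting skew-symmetric matrix $\t B$ is block-decomposable; hence by~\cite{FST} it is the adjacency quiver of an ideal tagged triangulation of a marked bordered surface. Finally I would verify that $\t B$, with its induced fiber partition, is an unfolding of $S$: the sum condition relating the entries of $\t B$ to those of $S$ holds blockwise by construction, and the required commutation of the mutations inside each fiber is a condition local to a block, hence also checked blockwise.

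\emph{The non-decomposable case.} Here only the seven diagrams of Figure~\ref{allfign} (and matrices mutation-equivalent to them) must be treated. For each I would exhibit an explicit unfolding given by a $\Z/2$- or $\Z/3$-symmetry: for instance $F_4$ unfolds to $E_6$, $\t F_4$ to the affine quiver $\t E_6$, $G_2$ to $D_4$ and $\t G_2$ to $\t D_4$, while the four extended-affine diagrams $G_2^{(*,+)}$, $G_2^{(*,*)}$, $F_4^{(*,+)}$, $F_4^{(*,*)}$ unfold to quivers of elliptic type ($E_6^{(1,1)}$ and the like) or to the exceptional mutation-finite quivers $X_6$, $X_7$. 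In every case the target is a quiver that is mutation-finite by the classification of~\cite{FST1}. It then remains to confirm that these really are unfoldings, i.e. that the commutation-of-mutations condition holds not only for the seven diagrams themselves but throughout their mutation classes. Since each of these classes is finite, this is a direct verification: for every matrix in the class one exhibits a compatible group action and checks the condition on the finitely many resulting fiber mutations.

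\emph{The main obstacle.} The delicate part is the gluing step of the first case, together with the fact that, for the embedding of mutation classes envisaged in the introduction, being an unfolding must be a property stable under the entire mutation class rather than for a single diagram. Thus the real work is (i) designing the local unfoldings of the s-blocks so that they are simultaneously compatible at all the finitely many types of outlet-gluings and so that the fiber mutations commute, and (ii) proving that the commutation of fiber mutations for the glued quiver survives an arbitrary sequence of mutations of $S$. I expect (ii) to reduce to a local analysis around each mutated vertex, using that the class of s-decomposable diagrams is closed under mutation (established in the course of proving Theorem~\ref{all-s}), so that a mutation may be carried out inside a single s-block of the decomposition and its effect on the cover read off from the already-classified local unfoldings.
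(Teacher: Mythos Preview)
Your approach is essentially the paper's: for the s-decomposable case the paper builds exactly the local unfoldings you describe (Table~\ref{newblocks}, with singleton fibers over outlets), glues them along outlets to obtain the map $\tau$, and then proves the unfolding property by showing $\tau(\mu_x(S)_{\mr{dec}})=\h\mu_x(\tau(S_{\mr{dec}}))$ via a local case analysis around the mutated vertex (Lemmas~\ref{comm} and~\ref{unfolding-l}); for the non-decomposable case it tabulates explicit unfoldings (Table~\ref{non}) and verifies each by exhausting the finite mutation class.

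Two small cautions. First, your local check does not quite go through uniformly: the paper encounters two exceptional block decompositions (Fig.~\ref{exclusion2}) where the commutation $\tau\circ\mu_x=\h\mu_x\circ\tau$ fails for the given decomposition, and these mutation classes must be handled by hand (Table~\ref{excl2}). Second, none of the seven non-decomposable families unfold to $X_6$ or $X_7$; the targets are $E_6$, $\t E_6$, $\t E_7$, $E_6^{(1,1)}$, $E_7^{(1,1)}$, $E_8^{(1,1)}$, and certain block-decomposable diagrams, and since the second clause concerns \emph{matrices} rather than diagrams, each of the seven diagrams requires a separate unfolding for each of its (one or two) skew-symmetrizable representatives.
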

\setcounter{theorem}{0}
\setcounter{section}{1}

Tagged triangulations corresponding to unfoldings of skew-symmetrizable matrices with s-decomposable diagrams (constructed in Section~\ref{unfolding-st}) have special symmetry property: each of them contains a pair of edges representing the same isotopy class (one tagged plain and the other tagged notched, we call them \emph{conjugate pair} of edges). In particular, we obtain a correspondence between s-decomposable diagrams and marked tagged triangulations:

\setcounter{theorem}{1}
\setcounter{section}{7}
\begin{theorem}
There is a one-to-one correspondence between s-decomposable skew-symmetrizable diagrams with fixed block decomposition and ideal tagged triangulations of marked bordered surfaces with fixed tuple of conjugate pairs of edges.
\end{theorem}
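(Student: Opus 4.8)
The plan is to prove this the way the analogous statement for quivers is proved in~\cite{FST}: set up a dictionary between the elementary pieces on the two sides, verify that it intertwines the two notions of gluing, and then conclude formally. First I would pin down the two decorated objects. On the combinatorial side, an s-decomposable diagram \emph{with fixed block decomposition} is a tuple of s-blocks together with a gluing pattern --- an identification of some of their distinguished nodes --- producing the given diagram, two such being the same only when an isomorphism of diagrams carries one block structure to the other. On the geometric side, an ideal tagged triangulation of a marked bordered surface \emph{with fixed tuple of conjugate pairs} is the triangulation together with a chosen ordered collection of conjugate pairs of edges, each pair consisting of one plain and one notched edge in the same isotopy class.

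The dictionary is built at the level of pieces. For each of the finitely many s-blocks $S_i$, the unfolding construction of Section~\ref{unfolding-st} assigns a skew-symmetric block $\widetilde S_i$ (or a $\Z/2$-symmetric pair of blocks in the sense of~\cite{FST}) together with its defining $\Z/2$-action; by~\cite{FST} this skew-symmetric block is the adjacency diagram of a puzzle piece --- a small ideal-triangulated surface piece --- and I would record the $\Z/2$-symmetry as a symmetry of that piece. The piece, read together with the conjugate pair of edges it carries at the fixed locus of the symmetry, is the ``s-puzzle piece'' attached to $S_i$. This requires checking, block by block, that each unfolded diagram is block-decomposable in the sense of~\cite{FST}, that the resulting piece has the claimed symmetry, and that distinct s-blocks give non-isomorphic s-puzzle pieces; the genuinely skew-symmetrizable s-blocks are exactly the ones producing a conjugate pair, while s-blocks that are already ordinary blocks of~\cite{FST} contribute none.

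Next I would check compatibility of gluing in both directions. Since unfolding commutes with gluing of blocks, identifying two nodes of s-blocks corresponds to gluing the unfolded blocks along boundary segments respecting the $\Z/2$-action, hence to gluing the associated s-puzzle pieces; thus assembling an s-decomposable diagram from its block decomposition matches assembling an ideal tagged triangulation from s-puzzle pieces, the tuple of conjugate pairs being the union of those carried by the individual pieces. Conversely, given an ideal tagged triangulation $T$ with a fixed tuple of conjugate pairs, I would take a block decomposition of $T$ furnished by~\cite{FST} chosen compatibly with the conjugate pairs, group the resulting puzzle pieces into unfolded s-blocks according to the conjugate-pair data, fold each to an s-block, and read off the gluing pattern; by construction this is inverse to the forward assignment, which yields injectivity of the correspondence.

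The main obstacle is surjectivity: showing that every ideal tagged triangulation equipped with a prescribed tuple of conjugate pairs actually arises in this way, which in particular forces the FST block decomposition of $T$ to be arrangeable compatibly with all the conjugate pairs at once. I expect this to reduce, via Theorem~\ref{all-s} and the skew-symmetric classification of~\cite{FST1}, to a local analysis at each conjugate pair --- the isotopy class of the underlying curve, the puncture or punctures at which it is tagged, and the adjacent triangles --- showing that this local configuration is always the unfolding of a neighbourhood of a node inside some s-block. The familiar low-complexity exceptions (self-folded triangles, once-punctured components, surfaces with very few marked points) will need separate treatment, checking directly that folding yields a legitimate s-block and that the conjugate-pair decoration is faithfully recorded. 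Finally I would stress that the correspondence is between \emph{decorated} objects: one s-decomposable diagram may carry several inequivalent block decompositions, and these are matched one-to-one with the several inequivalent tuples of conjugate pairs on the associated surface, not merely with the surface itself.
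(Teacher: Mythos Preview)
Your overall architecture---build a block-by-block dictionary via local unfolding, check that it respects gluing on both sides, then invert---is exactly what the paper does. Where you diverge is in your treatment of surjectivity, and there the divergence matters.

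You flag surjectivity as the main obstacle and propose to attack it through Theorem~\ref{all-s} and the classification of~\cite{FST1}. This is a misdirection: those results classify mutation-finite \emph{diagrams}, not triangulations, and it is not clear how they would even bear on the question of whether a given triangulation with marked conjugate pairs admits a compatible puzzle-piece decomposition. The paper's argument bypasses classification entirely with a single topological observation: a conjugate pair consists of two tagged arcs in the same isotopy class with opposite tags at a puncture, and in any tagged triangulation such a pair is forced to sit either inside a punctured digon or inside a once-punctured monogon. Those are precisely the puzzle pieces whose adjacency diagrams are skew-symmetric blocks of type $\mathrm{III}$, $\mathrm{IV}$, or $\mathrm{V}$, and folding them yields the corresponding s-blocks $\widetilde{\mathrm{III}}$, $\widetilde{\mathrm{IV}}$, $\widetilde{\mathrm{V}}_1$, $\widetilde{\mathrm{V}}_2$, $\widetilde{\mathrm{V}}_{12}$. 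So every triangulation with a marked tuple of conjugate pairs is already visibly assembled from foldable pieces, and surjectivity is immediate. Your phrase ``local analysis at each conjugate pair'' is in fact this argument; the scaffolding around it (classification, $\Z/2$-actions, anticipated low-complexity exceptions) is unnecessary. The one genuine exception, block $\widetilde{\mathrm{VI}}$, is dispatched in a line, as you guessed. Injectivity in the paper is also lighter than you suggest: once the triangulation is given, the distribution of triangles, digons and monogons among puzzle pieces is determined, so the folded block decomposition is unique.
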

\setcounter{theorem}{0}
\setcounter{section}{1}
In the correspondence above, one direction is provided by \emph{local unfoldings} (see Section~\ref{unfolding-st}). The other direction is provided by \emph{folding} (see Section~\ref{triangle}) of some of conjugate pairs of edges: due to the existence of unfolding, this operation occurs to be well-defined. Under this correspondence, block-decomposable diagrams correspond to triangulations with no conjugate pairs chosen. 

Note also that the correspondence above is invariant under mutations (resp., composite flips): if the triangulation $T(S)$ corresponds to a diagram $S$, then the triangulation $T(\mu_x(S))$ for a mutation $\mu_x(S)$ of a diagram $S$ in the vertex $x$ can be obtained by performing flips in all the edges of $T(S)$  corresponding to images of $x$ under local unfolding.

\smallskip

As in the skew-symmetric case (cf.~\cite[Theorem~7.5]{FST1}), consideration of minimal mutation-infinite diagrams gives rise to a polynomial-time algorithm to determine whether a large skew-symmetrizable matrix is mutation-finite:

\setcounter{theorem}{4}
\setcounter{section}{8}
\begin{theorem}
A skew-symmetrizable $n\times n$ matrix $B$, $n\ge 10$, has finite mutation class if and only if a mutation class of every principal $10\times 10$ submatrix of $B$ is finite.

\end{theorem}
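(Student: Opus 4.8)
The plan is to reduce the statement to a bound on the number of vertices of \emph{minimal mutation-infinite} diagrams and then invoke the classification of mutation-finite diagrams, following the strategy of~\cite[Theorem~7.5]{FST1}. The underlying tool is the monotonicity of mutation-infiniteness under taking supergraphs: if $S'$ is the induced subdiagram of a diagram $S$ on a vertex subset $V'$, then any sequence of mutations of $S$ performed at vertices of $V'$, followed by restriction to $V'$, coincides with the corresponding sequence of mutations of $S'$. This is immediate from the mutation rule~\ref{eq:MatrixMutation}, since for $i,j\in V'$ the entry $b'_{ij}$ produced by a mutation at $k\in V'$ depends only on entries indexed within $V'$. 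Hence a diagram having a mutation-infinite induced subdiagram is itself mutation-infinite; equivalently, a principal submatrix of a mutation-finite matrix is mutation-finite (and, restricting the skew-symmetrizing matrix $D$, is still skew-symmetrizable). This already settles the ``only if'' direction.

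Call a connected skew-symmetrizable diagram \emph{minimal mutation-infinite} if it is mutation-infinite while every proper induced subdiagram of it is mutation-finite. The heart of the argument is the \emph{Proposition}: every minimal mutation-infinite skew-symmetrizable diagram has at most $10$ vertices. Granting it, the ``if'' direction follows by contraposition: if $B$ is mutation-infinite, choose among the mutation-infinite induced subdiagrams of its diagram one, $S_0$, with the fewest vertices; then $S_0$ is minimal mutation-infinite (a smaller mutation-infinite subdiagram would contradict minimality) and connected (a disconnected diagram is mutation-finite precisely when all of its components are), so $S_0$ has $k\le 10$ vertices by the Proposition. Since $n\ge 10$, enlarge $S_0$ to an induced subdiagram on exactly $10$ vertices; it contains $S_0$, hence is mutation-infinite by the monotonicity above, so the corresponding principal $10\times 10$ submatrix of $B$ has infinite mutation class.

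To prove the Proposition I would combine Theorem~\ref{all-s} with the skew-symmetric classification of~\cite{FST1}: taken together they show that every mutation-finite skew-symmetrizable diagram is s-decomposable or mutation-equivalent to one of finitely many exceptional diagrams, all of order at most $10$ (the largest being $E_8^{(1,1)}$); in particular, every mutation-finite diagram of order at least $11$ is s-decomposable. Let $S$ be a minimal mutation-infinite diagram on $N$ vertices. If $N\ge 12$, every vertex-deleted subdiagram of $S$ has order at least $11$ and is mutation-finite, hence s-decomposable; the key lemma is that a connected diagram of large enough order all of whose vertex-deleted subdiagrams are s-decomposable is itself s-decomposable, which one proves through the analysis of block decompositions developed in~\cite{FST} and~\cite{FST1}, by inspecting the finitely many combinatorial types of blocks meeting the few edges incident to the deleted vertex and extending a block decomposition across that vertex. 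This contradicts the mutation-infiniteness of $S$, so $N\le 11$. The remaining case $N=11$, in which each vertex-deleted subdiagram is s-decomposable or mutation-equivalent to $E_8^{(1,1)}$, is closed by a finite, largely computer-assisted enumeration of all ways a single vertex, carrying the edge weights admissible in a mutation-finite diagram, can be attached so that every vertex-deletion remains mutation-finite: none of the resulting diagrams turns out to be mutation-infinite. Hence $N\le 10$, and since this value is attained the bound is sharp.

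I expect the main obstacle to be the key lemma above, namely ruling out that a single extra vertex together with its incident (possibly weighted) edges destroys every block decomposition of a large diagram while leaving all its proper subdiagrams s-decomposable. The residual finite verification in the case $N=11$, in particular for the diagrams built over $E_8^{(1,1)}$, is tedious but mechanical: a mutation-finite diagram has bounded vertex degrees and bounded edge weights, so only finitely many candidate attachments need to be checked.
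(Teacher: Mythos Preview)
Your proposal is correct and follows essentially the same route as the paper: reduce to the bound on minimal mutation-infinite diagrams (Lemma~\ref{le10}), which rests on Theorem~\ref{g8} (your ``key lemma'' that a diagram of order $\ge 8$ whose vertex-deletions are all s-decomposable is itself s-decomposable) together with the classification Theorem~\ref{all}, and then dispose of the residual case $N=11$ by a computer enumeration over the $5739$ diagrams in the mutation class of $E_8^{(1,1)}$. The only small slip is your phrasing of that enumeration's outcome: the paper finds that \emph{every} such attachment already has a mutation-infinite order-$10$ subdiagram, i.e.\ the set of candidates with all vertex-deletions mutation-finite is empty, rather than nonempty but consisting of mutation-finite diagrams.
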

\setcounter{theorem}{0}
\setcounter{section}{1}

\medskip

The paper is organized as follows. In Section~\ref{cluster}, we recall necessary definitions and basic facts on cluster algebras, exchange matrices, and their diagrams.


Section~\ref{blockdecomp} is devoted to the technique of s-decomposable diagrams. We recall the basic facts from~\cite{FST}, and reformulate the results of~\cite{FST} in the language of diagrams. Further, we introduce new blocks and prove several properties of block decompositions of diagrams. In particular, we show that s-decomposable diagrams are mutation-finite.

In Section~\ref{unfolding-s} we give a definition of unfolding of skew-symmetrizable matrices introduced by A.~Zelevinsky (personal communication), and extend it to a notion of unfolding of a diagram. This is the core construction of the paper. In general, an unfolding may not be unique. We construct a uniquely defined {\it local unfolding} for any s-decomposable diagram. Making use of this construction, we show that s-decomposable diagrams carry the same properties as block-decomposable quivers do.

Section~\ref{minimal} contains the proof of Theorem~\ref{all-s}. In Section~\ref{unfoldings}, we present a construction of unfolding for non-decomposable mutation-finite skew-symmetrizable matrices.

Section~\ref{triangle} is devoted to applications of the results of Section~\ref{unfoldings} to construction of relations between s-decomposable diagrams and triangulations of bordered surfaces.  

Finally, in Section~\ref{inf} we provide a polynomial-time algorithm which determines whether a skew-symmetrizable matrix has finite mutation class.

\medskip

We would like to thank B.~Keller who attracted our attention to foldings, and V.~Fock, A.~Goncharov, and S.~Fomin for fruitful discussions and advices. We are especially grateful to A.~Zelevinsky for introduction to unfoldings and numerous stimulating discussions leading to appearing of the present paper.
The first author thanks the Max Planck Institute for Mathematics in Bonn for hospitality.

\section{Cluster algebras, mutations, and diagrams}
\label{cluster}

\noindent
We briefly remind the definition of coefficient-free cluster algebra.

An integer $n\times n$ matrix $B$ is called \emph{skew-symmetrizable} if there exists an
integer diagonal $n\times n$ matrix $D=diag(d_1,\dots,d_n)$,
such that the product $BD$ is a skew-symmetric matrix, i.e.,
                                       $b_{ij}d_j=-b_{ji}d_i$.

\emph{A seed} is a pair $(f,B)$, where $f=\{f_1,\dots,f_n\}$ form a collection of algebraically independent rational functions of $n$ variables
$x_1,\dots,x_n$, and $B$ is a skew-symmetrizable matrix.

The part $f$ of seed $(f,B)$ is called \emph{cluster}, elements $f_i$ are called \emph{cluster variables},
and $B$ is called \emph{exchange matrix}.

\begin{definition}
For any $k$, $1\le k\le n$ we define \emph{the mutation} of seed $(f,B)$ in direction $k$
as a new seed $(f',B')$ in the following way:
\begin{equation}\label{eq:MatrixMutation}
b'_{ij}=\left\{
           \begin{array}{ll}
             -b_{ij}, & \hbox{ if } i=k \hbox{ or } j=k; \\
             b_{ij}+\frac{|b_{ik}|b_{kj}+b_{ik}|b_{kj}|}{2}, & \hbox{ otherwise.}
           \end{array}
         \right.
\end{equation}

\begin{equation}\label{eq:ClusterMutation}
f'_i=\left\{
           \begin{array}{ll}
             f_i, & \hbox{ if } i\ne k; \\
             \frac{\prod_{b_{ji}>0} f_j^{b_{ji}}+\prod_{b_{ji}<0} f_j^{-b_{ji}}}{f_i}, & \hbox{ otherwise.}
           \end{array}
         \right.
\end{equation}
\end{definition}

\noindent
We write $(f',B')=\mu_k\left((f,B)\right)$.
Notice that $\mu_k(\mu_k((f,B)))=(f,B)$.
We say that two seeds are \emph{mutation-equivalent}
if one is obtained from the other by a sequence of seed mutations.
Similarly we say that two clusters or two exchange matrices are \emph{mutation-equivalent}.

Notice that exchange matrix mutation~(\ref{eq:MatrixMutation}) depends only on the exchange matrix itself.
The collection of all matrices mutation-equivalent to a given matrix $B$ is called the \emph{mutation class} of $B$.

For any skew-symmetrizable matrix $B$ we define \emph{initial seed} $(x,\!B)$ as
$(\!\{x_1,\dots,x_n\}\!,\!B)$, $B$ is the \emph{initial exchange matrix}, $x=\{x_1,\dots,x_n\}$ is the \emph{initial cluster}.

{\it Cluster algebra} $\A(B)$ associated with the skew-sym\-met\-ri\-zab\-le $n\times n$ matrix $B$ is a subalgebra of $\Q(x_1,\dots,x_n)$ generated by all cluster variables of the clusters mutation-equivalent
to the initial seed $(x,B)$.

Cluster algebra $\A(B)$ is called \emph{of finite type} if it contains only finitely many
cluster variables. In other words, all clusters mutation-equivalent to initial cluster contain
totally only finitely many distinct cluster variables.

In~\cite{FZ2}, Fomin and Zelevinsky proved a remarkable theorem that cluster algebras of finite type
can be completely classified. More excitingly, this classification is parallel to the famous Cartan-Killing classification
of simple Lie algebras.


Let $B$ be an integer $n\times n$ matrix. Its \emph{Cartan companion} $C(B)$ is the integer $n\times n$ matrix defined as follows:

\begin{equation*}
  C(B)_{ij}=\left\{
              \begin{array}{ll}
                2, & \hbox{ if } i=j; \\
                 -|b_{ij}|, & \hbox{ otherwise.}
              \end{array}
            \right.
\end{equation*}

\begin{theorem}[\cite{FZ2}]
\label{thm:FinTypeClass}
There is a canonical bijection between the Cartan matrices of
finite type and cluster algebras of finite
type. Under this bijection, a Cartan matrix $A$ of finite type corresponds to the cluster algebra
$\A(B)$, where $B$ is an arbitrary skew-symmetrizable matrix with $C(B) = A$.
\end{theorem}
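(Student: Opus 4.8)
This is the Fomin--Zelevinsky finite type classification, and the plan is to reproduce the structure of the proof in \cite{FZ2}: a ``necessity'' half showing that a finite type cluster algebra has some matrix with finite-type Cartan companion in its mutation class, and a ``sufficiency'' half showing that a finite-type Cartan companion forces finite type, together with an explicit combinatorial model for the cluster variables. For necessity I would first record the rank-$2$ computation: for $B'$ in the mutation class and $i\neq j$, restricting mutations to the pair $\{i,j\}$ makes the two cluster variables $f_i,f_j$ evolve by the classical rank-$2$ recursion with parameters $|b'_{ij}|,|b'_{ji}|$, and if $|b'_{ij}b'_{ji}|\ge 4$ a direct degree estimate (via \eqref{eq:ClusterMutation}) shows these are infinitely many distinct cluster variables of $\A(B)=\A(B')$; hence if $\A(B)$ is of finite type then $|b'_{ij}b'_{ji}|\le 3$ throughout the mutation class --- call the diagram of such a $B$ \emph{$2$-finite}. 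The heart of this half is then the purely combinatorial classification: a connected diagram is $2$-finite if and only if it is mutation-equivalent to an orientation of a Dynkin diagram. I would prove it by listing, up to mutation, all $2$-finite diagrams on at most three vertices, and then bootstrapping --- every induced subdiagram on $\le 3$ vertices of a $2$-finite diagram is again $2$-finite, which tightly restricts the edges and weights incident to a vertex --- and, using mutations to reach a normal form while inducting on the number of vertices, ruling out everything except the Dynkin shapes.

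For sufficiency and the bijection I would pass to the root system $\Phi$ of $A=C(B)$ with simple roots $\Pi=\{\alpha_1,\dots,\alpha_n\}$, and to the almost positive roots $\Phi_{\ge-1}=\Phi_{>0}\cup(-\Pi)$. In four steps: (i) introduce the \emph{compatibility degree} $(\alpha\,\|\,\beta)$ on $\Phi_{\ge-1}$, characterized as the unique nonnegative-integer-valued function invariant under the piecewise-linear rotation $\tau$ (assembled from the two partial products of simple reflections adapted to the bipartition of the Dynkin diagram) with $(-\alpha_i\,\|\,\beta)$ equal to the coefficient of $\alpha_i$ in $\beta$ truncated below at $0$, and check it is symmetric; (ii) call a subset of $\Phi_{\ge-1}$ compatible when it is pairwise of degree $0$, define root clusters as maximal compatible subsets, and prove every root cluster has exactly $n$ elements, $\{-\alpha_1,\dots,-\alpha_n\}$ is one, and the resulting simplicial ``cluster complex'' is connected and thin (every codimension-one face lies in exactly two maximal faces), the two completions of $C\setminus\{\beta\}$ related by an exchange relation patterned on \eqref{eq:ClusterMutation}; (iii) transport cluster variables onto $\Phi_{\ge-1}$ by sending $x_i$ to $-\alpha_i$ and following mutations, the crux being \emph{well-definedness} --- around any cycle of mutations one recovers the same cluster variables --- which through the local structure of the exchange graph reduces to finitely many rank-$2$ and rank-$3$ periodicity checks; (iv) show the correspondence is onto (every cluster variable is reached from the initial seed) and injective (distinct almost positive roots have distinct denominator vectors, using the Laurent phenomenon and an induction identifying the $\mathbf d$-vector of the variable attached to $\beta$ with $\beta$ itself). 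It follows that $\A(B)$ has exactly $|\Phi_{\ge-1}|<\infty$ cluster variables, hence is of finite type; since the construction used only $A$, the isomorphism type of $\A(B)$ depends only on $A$ and, conversely, the Cartan--Killing type is recovered from the cluster complex (up to the usual identification of types $B_n$ and $C_n$), which is the asserted canonical bijection.

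The main obstacle --- occupying most of \cite{FZ2} --- is the combinatorial classification of $2$-finite diagrams: each step is elementary, but organizing the case analysis so that it is genuinely exhaustive rather than an unmotivated list is delicate. This is precisely the ingredient the present paper must re-examine, since in the mutation-\emph{finite} regime the analogous finiteness condition no longer forces Dynkin type and the resulting classification is far richer. A secondary difficulty is the well-definedness of the root-to-variable correspondence around cycles of the exchange graph --- equivalently, that the combinatorially defined cluster complex really is the dual complex of the exchange graph of $\A(B)$ --- which relies on the periodicity phenomena in ranks $2$ and $3$ and on the uniqueness characterizing the compatibility degree.
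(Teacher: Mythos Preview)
The paper does not prove this theorem at all: it is quoted from \cite{FZ2} purely as background, with no argument given here. So there is no ``paper's own proof'' to compare your proposal against; the authors simply invoke the result and move on to their actual subject, mutation-finite (rather than finite-type) cluster algebras.

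That said, your outline is a faithful summary of the strategy in \cite{FZ2}: the $2$-finiteness reduction via the rank-$2$ recursion, the combinatorial classification of $2$-finite diagrams as mutation-equivalent to Dynkin orientations, and the sufficiency half via almost positive roots, the compatibility degree, and the generalized associahedron. One small caution: your parenthetical ``up to the usual identification of types $B_n$ and $C_n$'' is not quite right for the statement as phrased. The theorem speaks of Cartan \emph{matrices}, not Dynkin diagrams, and $B_n$ and $C_n$ have distinct (transpose) Cartan matrices; the corresponding cluster algebras are genuinely distinguished in the classification (they share the same diagram but correspond to different skew-symmetrizable $B$'s and different symmetrizers). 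Otherwise the sketch is sound, though of course each of the steps you list---especially the exhaustive $2$-finite classification and the well-definedness of the $\Phi_{\ge -1}$ labeling---occupies substantial space in \cite{FZ2} and cannot be reduced to a paragraph.
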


The results by Fomin and Zelevinsky were further developed in~\cite{S} and~\cite{BGZ}, where the effective criteria for cluster algebras of finite type were given.

A cluster algebra of finite type has only finitely many distinct seeds.
Therefore, any cluster algebra that has only finitely many cluster variables contains only finitely many
distinct exchange matrices. Quite the contrary, the cluster algebra with finitely many exchange matrices
is not necessarily of finite type.

\begin{definition}\label{def:FinMutType} A cluster algebra with only finitely many exchange matrices is called \emph{of finite mutation type}.
\end{definition}

\begin{example} The easiest example of infinite cluster algebra of finite mutation type is the algebra whose
exchange matrix is
\begin{center}
$\begin{pmatrix}
   0 & 2  \\
   -2 & 0
 \end{pmatrix}$
 \end{center}
 This cluster algebra is not of finite type, however, mutation in any direction leads simply to sign change of exchange matrix.
 Therefore, the algebra is clearly of finite mutation type.
\end{example}


\begin{remark} Since the orbit of an exchange matrix depends on the exchange matrix only, we may speak about skew-symmetrizable matrices of finite mutation type.
\end{remark}

Therefore, Theorem~\ref{all-s} describes  \emph{all skew-symmetrizable integer matrices whose mutation class is finite}.



Following~\cite{FZ2}, we encode an $n\times n$ skew-symmetrizable integer matrix $B$ by a finite simplicial $1$-complex $S$ with oriented weighted edges called {\it diagram}. The weights of a diagram are positive integers.

Vertices of $S$ are labeled by $[1,\dots,n]$. If $b_{ij}>0$, we join vertices $i$ and $j$ by an  edge directed from $i$ to $j$ and assign to this edge weight $-b_{ij}b_{ji}$. Not every diagram corresponds to a skew-symmetrizable integer matrix: given a diagram $S$ of a skew-symmetrizable integer matrix $B$, a product of weights along any chordless cycle of $S$ is a perfect square (cf.~\cite[Exercise~2.1]{Kac}).

Distinct matrices may have the same diagram. At the same time, it is easy to see that only finitely many matrices may correspond to the same diagram.
All weights of a diagram of a skew-symmetric matrix are perfect squares. Conversely, if all weights of a diagram $S$ are perfect squares, then there exists a skew-symmetric matrix $B$ with diagram $S$.

As it is shown in~\cite{FZ2}, mutations of exchange matrices induce {\it mutations of diagrams}. If $S$ is the diagram corresponding to matrix $B$, and $B'$ is a mutation of $B$ in direction $k$, then we call the diagram $S'$ associated to $B'$ a {\it mutation of $S$ in direction $k$} and denote it by $\mu_k(S)$. A mutation in direction $k$ changes weights of diagram in the way described in Figure~\ref{quivermut} (see~\cite{FZ2}).

\begin{figure}[!h]
\begin{center}
\psfrag{a}{\small $a$}
\psfrag{b}{\small $b$}
\psfrag{c}{\small $c$}
\psfrag{d}{\small $d$}
\psfrag{k}{\small $k$}
\psfrag{mu}{\small $\mu_k$}
\epsfig{file=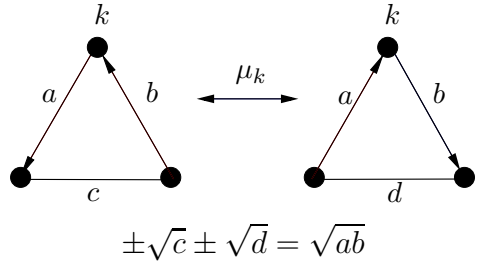,width=0.4\linewidth}\\
\medskip
$\pm\sqrt{c}\pm\sqrt{d}=\sqrt{ab}$
\caption{Mutations of diagrams. The sign before $\sqrt{c}$ (resp., $\sqrt{d}$) is positive if the three vertices form an oriented cycle, and negative otherwise. Either $c$ or $d$ may vanish. If $ab$ is equal to zero then neither value of $c$ nor orientation of the corresponding edge does change.}
\label{quivermut}

\end{center}
\end{figure}

For given diagram, the notion of {\it mutation class} is well-defined. We call a diagram (resp., matrix) {\it mutation-finite} if its mutation class is finite.
\begin{remark}
\label{d-m}
Note that the order of mutation class of a matrix may differ from the order of mutation class of corresponding diagram (see Example~\ref{diff} below). However, mutation class of a matrix is finite if and only if a mutation class of the corresponding diagram is finite.
\end{remark}

\begin{example}
\label{diff}
The mutation class of the following matrix
\begin{center}
$\begin{pmatrix}
   0 & 2 & -4 \\
   -1 & 0 & 2 \\
   1 & -1 & 0 \\
 \end{pmatrix}$
  \end{center}
consists of $6$ matrices (up to simultaneous permutations of rows and columns). At the same time, the mutation class of the corresponding diagram  contains $4$ diagrams only.

\end{example}

Due to Remark~\ref{d-m}, we can reduce the problem of classification of exchange matrices of finite mutation type to the following: \emph{find all mutation-finite diagrams}.

The following criterion for a diagram to be mutation-finite is well-known. We present a short proof for the convenience of the reader.

\begin{theorem}
\label{less3}
A connected diagram $S$ of order at least $3$ is mutation-finite if and only if any diagram in the mutation class of $S$ contains no edges of weight greater than $4$.
\end{theorem}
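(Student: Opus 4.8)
The plan is to prove both directions of the equivalence, with the ``only if'' direction being essentially immediate and the ``if'' direction requiring a local analysis of mutations. For the easy direction, suppose $S$ is mutation-finite; if some diagram $S'$ in its mutation class contained an edge of weight $w>4$, I would argue that mutating repeatedly at one endpoint of this edge forces the weight to grow without bound. Concretely, for a two-vertex subdiagram with edge weight $w$, the matrix entries $b_{12},b_{21}$ satisfy $-b_{12}b_{21}=w$; mutation at vertex $1$ or $2$ flips signs but, once we also involve a third vertex adjacent to the edge, the relation $\pm\sqrt c\pm\sqrt d=\sqrt{ab}$ from Figure~\ref{quivermut} with $a=w>4$ produces a new edge of strictly larger weight. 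I would need to check this growth carefully: the point is that with $w\ge 5$ one can always find a mutation (possibly after first creating a suitable triangle) under which the maximal weight in the diagram strictly increases, contradicting finiteness of the mutation class.

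For the harder direction, assume every diagram in the mutation class of $S$ has all weights $\le 4$; I want to conclude the mutation class is finite. The key observation is that there are only finitely many diagrams on $n$ labeled vertices with all weights in $\{1,2,3,4\}$, since each pair of vertices carries at most one edge with one of four possible weights and one of two orientations, giving at most $9^{\binom n2}$ possibilities. Hence boundedness of weights along the whole mutation class immediately bounds the number of diagrams in it, and the mutation class is finite. This direction is therefore almost a tautology once the weight bound is assumed; the content is entirely in the converse.

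The main obstacle will be making the ``only if'' argument rigorous: I must show that the mere \emph{presence} of an edge of weight $>4$ somewhere in the mutation class already forces infinitely many diagrams, and this is not purely local to that edge. The cleanest route I see is: first reduce to the case where the heavy edge $\{i,j\}$ of weight $w\ge 5$ sits in a connected diagram of order $\ge 3$, so there is a vertex $k$ adjacent to $i$ or $j$; then track the pair of weights $(c,d)$ on the two edges of the triangle through $k$ under mutation at $k$, using $\pm\sqrt c\pm\sqrt d=\sqrt{ab}$ with one of $a,b$ equal to $w$. I expect that iterating mutations at alternating vertices yields a sequence of weights growing like a linear recursion with ratio governed by $\sqrt w>2$, hence unbounded. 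A subtlety to handle is the case where the third vertex $k$ is not adjacent to the heavy edge at all: then I would first perform one mutation to connect things up, or invoke connectedness to find a path from $k$ into the heavy edge and push the weight along it. I would also remark that the order-$\ge 3$ and connectedness hypotheses are genuinely needed (for a single weight-$w$ edge in isolation, i.e.\ a $2\times 2$ matrix, the mutation class is finite regardless of $w$), which is why the theorem is stated for connected diagrams of order at least $3$.
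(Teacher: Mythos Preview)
Your overall strategy matches the paper's: the direction ``all weights $\le 4$ throughout the mutation class $\Rightarrow$ mutation-finite'' is the trivial counting argument, and the content lies in showing that an edge of weight $\ge 5$ forces infinitely many diagrams. (Your opening paragraph mislabels which direction is which, though you sort this out by the end.)

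Where the paper is sharper is in the execution of the growth argument. First, rather than working inside the full diagram and worrying about how a far-away vertex $k$ interacts with the heavy edge, the paper immediately reduces to a connected subdiagram of order exactly~$3$ containing the heavy edge; this is legitimate because mutation at a vertex of a subdiagram restricts to mutation of the subdiagram, so a mutation-infinite subdiagram forces the ambient diagram to be mutation-infinite. This makes your concern about ``$k$ not adjacent to the heavy edge'' disappear: in a connected order-$3$ diagram containing the heavy edge, the third vertex is automatically adjacent to at least one endpoint. Second, instead of chasing a linear recurrence on individual weights, the paper tracks the \emph{total weight} $a+b+c$ of the triangle (with $a\ge b\ge c$, $a\ge 5$) and shows that in at most two mutations one can strictly increase it while keeping $a$ fixed: if the triangle is an oriented cycle, mutating at the vertex opposite $c$ replaces $c$ by $(\sqrt{ab}-\sqrt c)^2>c$; if not, one mutation arranges the orientations and the next replaces $c$ by $(\sqrt{ab}+\sqrt c)^2$. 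This yields an infinite sequence of pairwise distinct triangles with no further bookkeeping. Your recursion idea would also work, but the total-weight monovariant is cleaner and avoids the case analysis you anticipate.
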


\begin{proof}
The sufficiency is evident. To prove the necessity, it is sufficient to show that any connected diagram of order $3$ containing an edge of weight at least $5$ is mutation-infinite. For that we show that, in the assumptions above, there always exists a sequence of at most two mutations increasing the sum of the three weights (we call this sum {\it total weight}) and preserving the maximal weight.

Let $S$ be a diagram of order $3$ with weights $(a,b,c)$, $a\ge b\ge c$, $a\ge 5$. If $S$ is cyclically oriented (i.e., $S$ is an oriented cycle), then mutating in the common vertex of edges with weights $a$ and $b$ we get a triple $(a,b,(\sqrt{ab}-\sqrt{c})^2)$, which has larger total weight since $a\ge b\ge c$ and $a\ge 5$ imply $(\sqrt{ab}-\sqrt{c})^2>c$.

Now let $S$ be not cyclically oriented. Applying one mutation (without changing weights) if needed, we may assume that the edges with weights $a$ and $b$ are oriented in the same way. Mutating in their common vertex, we get a triple $(a,b,(\sqrt{ab}+\sqrt{c})^2)$ which clearly has larger total weight than the initial triple did.

\end{proof}

\begin{remark}
The case of {\it mutation-acyclic} diagrams was treated by Seven in~\cite{S1}: it is proved there that mutation class of a mutation-finite diagram $S$ contains a diagram without oriented cycles if and only if $S$ is mutation equivalent to orientation of Dynkin (or extended Dynkin) diagram.    

\end{remark}

\medskip

From now on, we use language of diagrams. The following notation will be used throughout the paper.

Let $S$ be a diagram. A {\it subdiagram} $S_1\subset S$ is a subcomplex of $S$. The {\it order}  $|S|$ is the number of vertices of diagram $S$.
If $S_1$ and $S_2$ are subdiagrams of diagram $S$, we denote by $\l S_1,S_2\r$ the subdiagram of $S$ spanned by all the vertices of $S_1$ and $S_2$.

An edge is called {\it simple} if its weight is equal to one, and {\it multiple} otherwise.

\section{Block decompositions of diagrams}
\label{blockdecomp}

First, we rephrase the definition~3.1 from~\cite{FST} in terms of diagrams.

In~\cite{FST}, a {\it block} is a diagram isomorphic to one of the diagrams with black/white colored vertices shown on Fig.~\ref{bloki}, or to a single vertex. Vertices marked in white are called {\it outlets}, we call the remaining ones {\it dead ends}. A connected diagram $S$ is called {\it block-decomposable} if it can be obtained from a collection of blocks by identifying outlets of different blocks along some partial matching (matching of outlets of the same block is not allowed), where two simple edges with same endpoints and opposite directions cancel out, and two simple edges with same endpoints and same directions form an edge of weight $4$. A non-connected diagram $S$ is called  block-decomposable either if $S$ satisfies the definition above, or if $S$ is a disjoint union of several mutually orthogonal diagrams satisfying the definition above.
If $S$ is not block-decomposable then we call $S$ {\it non-decomposable}. Depending on a block, we call it {\it a block of type} $\rm{I}$, $\rm{II}$, $\rm{III}$, $\rm{IV}$, $\rm{V}$, or simply {\it a block of $n$-th type}.

\begin{figure}[!h]
\begin{center}
\psfrag{1}{${\rm{I}}$}
\psfrag{2}{${\rm{II}}$}
\psfrag{3a}{${\rm{IIIa}}$}
\psfrag{3b}{${\rm{IIIb}}$}
\psfrag{4}{${\rm{IV}}$}
\psfrag{5}{${\rm{V}}$}
\epsfig{file=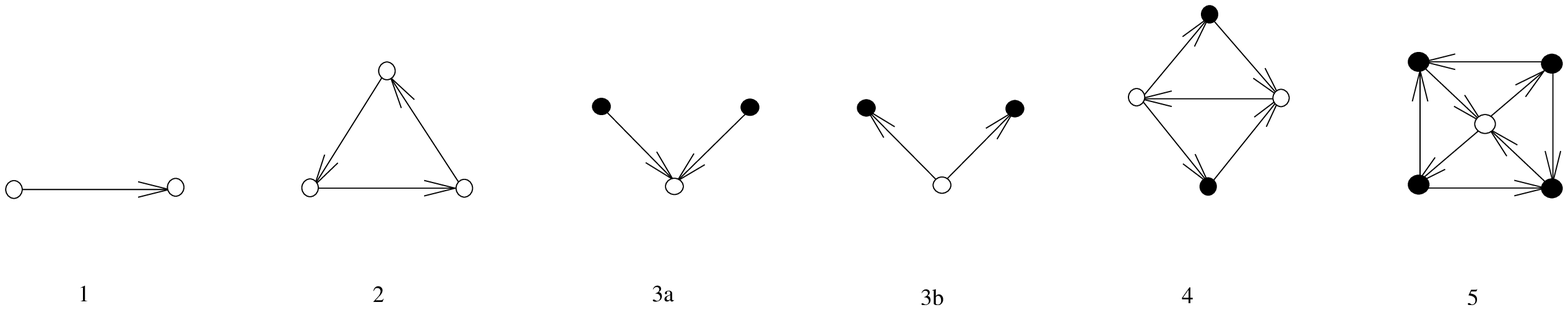,width=0.999\linewidth}
\caption{Blocks. Outlets are colored in white, dead ends are black.}
\label{bloki}
\end{center}
\end{figure}

Block-decomposable diagrams are in one-to-one correspondence with adjacency matrices of arcs of ideal (tagged) triangulations of bordered two-dimensional surfaces with marked points (see~\cite[Section~13]{FST} for the detailed explanations). Mutations of block-decomposable diagrams correspond to flips of triangulations. In particular, this implies that mutation class of any block-decomposable diagram is finite, and any subdiagram of a block-decomposable one is block-decomposable too.

\medskip

Clearly, adjacency matrices of arcs of ideal triangulations are skew-symmetric. To adopt the technique of blocks to general (skew-symmetrizable) case, we introduce new blocks of types $\mr{\t{III}a}$, $\mr{\t{III}b}$, $\t{\mr{IV}}$, $\t{\mr V}_1$, $\t{\mr V}_2$, $\t{\mr V}_{12}$, and $\t{\mr{VI}}$ shown in Table~\ref{newblocks}.

\begin{table}[!h]
\caption{New blocks and their local unfoldings (see Sections~\ref{unfolding-s},~\ref{unfolding-st}). Vertex $x_i$ and the set $E_i$ are marked in the same way.}
\label{newblocks}
\begin{tabular}{|l|c|c|c|c|c|c|c|}
\hline
&&&&&&&\\
\begin{tabular}{c}
\raisebox{-0.4cm}{New blocks}
\end{tabular}
&
\psfrag{2-}{\tiny $2$}
\psfrag{3at}{\scriptsize ${\mr{\t{III}a}}$}
\parbox[c]{1.4cm}{\epsfig{file=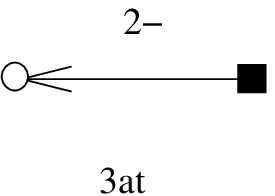,width=0.99\linewidth}}
&
\psfrag{2-}{\tiny $2$}
\psfrag{3bt}{\scriptsize ${\mr{\t{III}b}}$}
\parbox[c]{1.4cm}{\epsfig{file=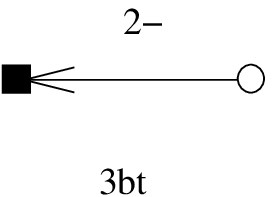,width=0.99\linewidth}}
&
\psfrag{2-}{\tiny $2$}
\psfrag{4t}{\scriptsize $\t{\mr{IV}}$}
\parbox[c]{1.4cm}{\epsfig{file=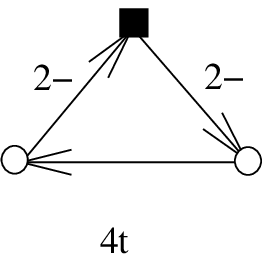,width=0.99\linewidth}}
&
\psfrag{2-}{\tiny $2$}
\psfrag{51t}{\scriptsize $\t{\mr{V}}_1$}
\parbox[c]{1.4cm}{\epsfig{file=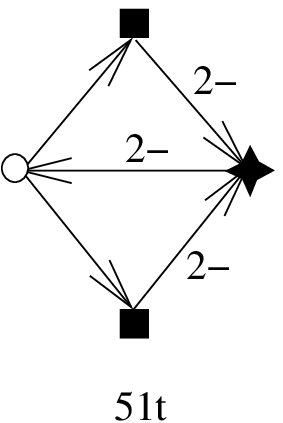,width=0.99\linewidth}}
&
\psfrag{2-}{\tiny $2$}
\psfrag{52t}{\scriptsize $\t{\mr{V}}_2$}
\parbox[c]{1.4cm}{\epsfig{file=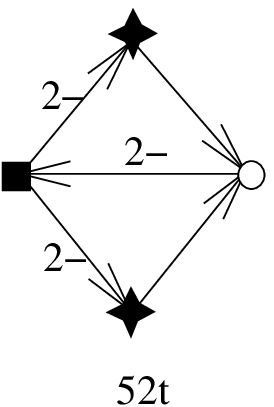,width=0.99\linewidth}}
&
\psfrag{2-}{\tiny $2$}
\psfrag{4}{\tiny $4$}
\psfrag{512t}{\scriptsize $\t{\mr{V}}_{12}$}
\parbox[c]{1.4cm}{\epsfig{file=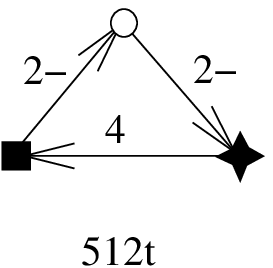,width=0.99\linewidth}}
&
\psfrag{2-}{\tiny $2$}
\psfrag{6t}{\scriptsize $\t{\mr{VI}}$}
\parbox[c]{1.4cm}{\epsfig{file=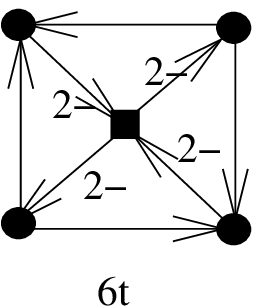,width=0.99\linewidth}}
\\
&&&&&&&
\\
\hline
&&&&\multicolumn{3}{c|}{}&\multicolumn{1}{c|}{}\\
\begin{tabular}{c}
Unfoldings
\end{tabular}
&
\parbox[c]{1.4cm}{\epsfig{file=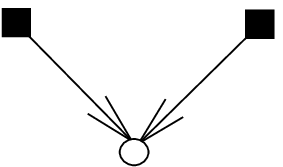,width=0.99\linewidth}}
&
\parbox[c]{1.4cm}{\epsfig{file=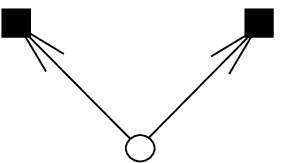,width=0.99\linewidth}}
&
\parbox[c]{1.4cm}{\epsfig{file=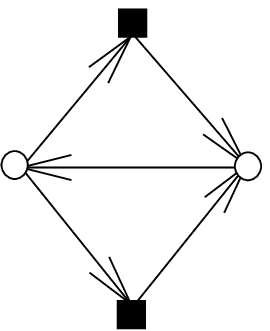,width=0.99\linewidth}}
&
\multicolumn{3}{c|}{
\parbox[c]{1.4cm}{\epsfig{file=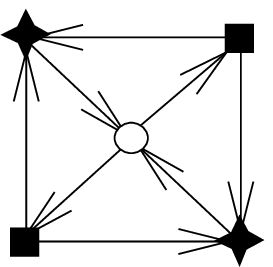,width=0.99\linewidth}}
}
&
\parbox[c]{1.4cm}{\epsfig{file=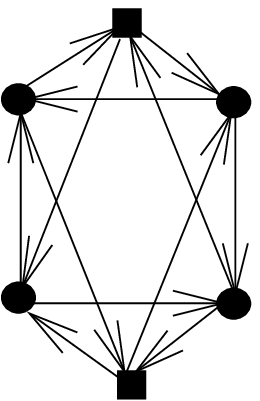,width=0.99\linewidth}}
\\
&&&&\multicolumn{3}{c|}{}&\multicolumn{1}{c|}{}\\
\hline

\end{tabular}
\end{table}

Again, outlets are marked white. We keep the way of gluing (this remains well-defined since any edge with two outlets as ends is simple). More precisely, gluing of two edges of weight one will result in either empty edge (in case of distinct orientations) or an edge with weight $4$. 
\begin{definition}
A diagram is {\it s-decomposable} if it can be glued from blocks (both old and new). 
\end{definition}
We keep the term ``block-decomposable'' for s-decomposable diagrams corresponding to skew-symmetric matrices.

Our aim is to prove that s-decomposable diagrams satisfy the same properties as block-decomposable ones do. In particular, in Theorem~\ref{invariant} we show that the set of s-decomposable diagrams is invariant under mutations (which implies that they are mutation-finite). In the next section we prove that any subdiagram of s-decomposable diagrams is s-decomposable (see Corollary~\ref{sub}).

%
%
%



Let $S$ be an s-decomposable diagram with fixed decomposition (we denote this by $S_{\mr{dec}}$). We say that $x\in S_{\mr{dec}}$ is an {\it outlet} if $x$ is contained in exactly one block, and $x$ is an outlet in that block. Further, suppose that for some $y\in S_{\mr{dec}}$ the diagram  $\mu_y(S)$ is s-decomposable. Then a block decomposition $\mu_y(S)_{\mr{dec}}$ of
$\mu_y(S)$ is {\it $y$-good} if all outlets of $S_{\mr{dec}}$ (probably, except $y$ itself) are outlets of $\mu_y(S)_{\mr{dec}}$.

If $S$ is s-decomposable and a decomposition is fixed, we define $N_x(S_{\mr{dec}})$ to be the union of all blocks containing $x$.  Note that $N_x(S_{\mr{dec}})$ may not be a subdiagram of $S$.

\begin{lemma}
\label{good}

Let $S_{\mr{dec}}$ coincide with $N_x(S_{\mr{dec}})$ (i.e. $S_{\mr{dec}}$ is composed of blocks ${\mathsf{B}}_1$ and ${\mathsf{B}}_2$, ${\mathsf{B}}_2$ may be empty), $x\in S$, where $x\in {\mathsf{B}}_1\cap {\mathsf{B}}_2$ if ${\mathsf{B}}_2\ne\emptyset$.
Then there exists an $x$-good block decomposition of $\mu_x(S)$.
\end{lemma}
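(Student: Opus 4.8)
The plan is to prove Lemma~\ref{good} by a direct finite case analysis. Since a partial matching glues each outlet to at most one other outlet, the hypothesis $S_{\mr{dec}}=N_x(S_{\mr{dec}})$ forces $S_{\mr{dec}}$ to be one of finitely many diagrams: either a single block ${\mathsf{B}}_1$ (the case ${\mathsf{B}}_2=\emptyset$, in which $x$ is an arbitrary vertex of ${\mathsf{B}}_1$), or two blocks ${\mathsf{B}}_1,{\mathsf{B}}_2$ glued along the outlet $x$, possibly together with a further pair of matched outlets. Since each old block of Fig.~\ref{bloki} and each new block of Table~\ref{newblocks} has at most five vertices, the diagram $S$ has bounded order, and there are only finitely many such $S$ together with the choice of $x$. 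For each of them I would compute $\mu_x(S)$ from the mutation rule of Fig.~\ref{quivermut} and exhibit a block decomposition of $\mu_x(S)$ all of whose outlets are among the outlets of $S_{\mr{dec}}$ other than $x$; this is exactly an $x$-good decomposition.

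Two reductions shorten the list. First, locality: by Fig.~\ref{quivermut} a mutation at $x$ changes no edge that is both non-incident to $x$ and does not join two neighbours of $x$, so only the vertices sharing a block with $x$, their mutual edges, and their edges to $x$ are affected; this is what makes the case list finite and what will later allow the resulting local decomposition to be re-attached to the remainder of a global s-decomposable diagram. Second, orientation normalization: a mutation at a vertex which changes no weights (the last clause of Fig.~\ref{quivermut}) lets me prescribe the orientations of the two edges at $x$ inside each block before mutating, so only a few orientation patterns must be examined. In the single-block case, for each of the five old blocks the mutation at a vertex is a flip of a triangulated piece in the sense of~\cite{FST}, so the result is block-decomposable, and its effect on the outlets is read off directly; for each of the seven new blocks one computes $\mu_x(S)$ by hand and checks that the result is again a union of (old and new) blocks with the prescribed white vertices still white. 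These computations are precisely what motivates the particular choice of the new blocks $\mr{\t{III}a}$, $\mr{\t{III}b}$, $\t{\mr{IV}}$, $\t{\mr V}_1$, $\t{\mr V}_2$, $\t{\mr V}_{12}$, $\t{\mr{VI}}$.

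In the two-block case, inside each of ${\mathsf{B}}_1$ and ${\mathsf{B}}_2$ the mutation at $x$ acts exactly as in the single-block analysis, while between a vertex $u$ of ${\mathsf{B}}_1$ and a distinct vertex $v$ of ${\mathsf{B}}_2$ an edge is created or modified only when both $u$ and $v$ are neighbours of $x$; since $S$ contains no such edge to begin with and an edge joining two outlets is simple, the relation $\pm\sqrt c\pm\sqrt d=\sqrt{ab}$ of Fig.~\ref{quivermut} with $a=b=1$ produces a new simple edge $uv$. Thus $\mu_x(S)$ is assembled from the re-decomposed mutation of ${\mathsf{B}}_1$, the re-decomposed mutation of ${\mathsf{B}}_2$, and a collection of new simple edges, and one verifies, case by case according to the unordered pair of types of ${\mathsf{B}}_1,{\mathsf{B}}_2$ and the gluing pattern, that the result is s-decomposable with every outlet of $S_{\mr{dec}}$ except $x$ preserved. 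Throughout one must respect the two gluing conventions --- simple edges of opposite direction cancel, and simple edges of the same direction merge into an edge of weight $4$ --- which, together with keeping track of orientations, is where arithmetic slips are easiest.

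The main obstacle is not conceptual but is the whole substance of the lemma: one has to verify, in every one of these configurations, that $\mu_x(S)$ admits a block decomposition keeping the designated vertices as outlets. A priori some vertex could be forced to be a dead end in every decomposition of $\mu_x(S)$, and the point --- and the reason the new blocks of Table~\ref{newblocks} are introduced at all --- is that this never happens, i.e.\ that the class of s-decomposable diagrams is closed under these local mutations. There is no shortcut around running through the list; proving Lemma~\ref{good} amounts to the finite but lengthy verification that the new blocks suffice, most conveniently recorded in a table of the form ``block(s) / mutation vertex / resulting decomposition'', after which one further checks, as a consistency test, that no weight exceeding $4$ is ever created (automatic once an s-decomposable decomposition is exhibited).
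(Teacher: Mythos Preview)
Your approach is correct and is exactly what the paper does: its proof reads, in full, ``Proof is straightforward: we need to examine $49$ diagrams of gluings of two blocks,'' followed by a single illustrative example. One small slip in your heuristic description of the two-block case: the edges $xu$ and $xv$ need not have weight $1$, since $u$ or $v$ may be a dead end of its block (e.g.\ in $\mr{\t{III}a}$ or $\t{\mr{IV}}$ the outlet is joined to a dead end by an edge of weight $2$), so the new cross-edge created by $\mu_x$ can have weight $2$ or even $4$; this does not affect the method, only the bookkeeping in the case list.
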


Proof is straightforward: we need to examine $49$ diagrams of gluings of two blocks.

\begin{example}
\label{ex-good}
We illustrate the proof of lemma~\ref{good} on one example shown on Fig.~\ref{fig-good}, left. Here ${\mathsf{B}}_1$ is of type ${\mr{II}}$, and ${\mathsf{B}}_2$ is of type $\t{\mr{IV}}$. Outlets of $S_{\mr{dec}}$ are $y_1$, $y_2$, and $y_3$.
\begin{figure}[!h]
\begin{center}
\psfrag{2-}{\tiny $2$}
\psfrag{x}{\scriptsize $x$}
\psfrag{1}{\scriptsize $y_1$}
\psfrag{2}{\scriptsize $y_2$}
\psfrag{3}{\scriptsize $y_3$}
$S_{\mr{dec}}\quad$\raisebox{5mm}{\parbox[c]{4.0cm}{\epsfig{file=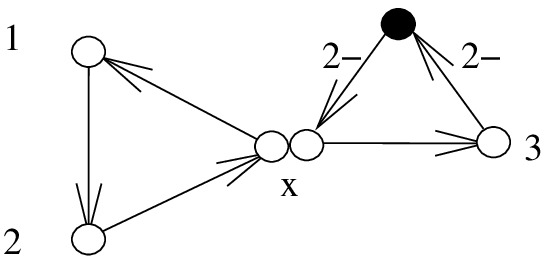,width=1.0\linewidth}}}\qquad\qquad\qquad
${\mu_x(S)}_{\mr{dec}}\quad$\raisebox{5mm}{\parbox[c]{3.6cm}{\epsfig{file=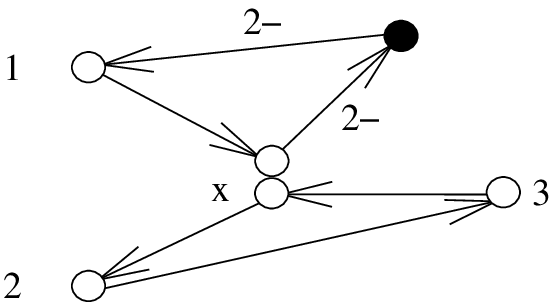,width=1.0\linewidth}}}
\caption{}
\label{fig-good}
\end{center}
\end{figure}

Then $\mu_x(S)$ has a block decomposition $\mu_x(S)_{\mr{dec}}$ shown on Fig.~\ref{fig-good}, right. Clearly, the vertices $y_1$, $y_2$, and $y_3$ are outlets of $\mu_x(S)_{\mr{dec}}$, so the decomposition is $x$-good.

\end{example}


\begin{lemma}
\label{glue_in}
Suppose $N_x(S_{\mr{dec}})=\l {\mathsf{B}}_1,{\mathsf{B}}_2\r$, ${\mathsf{B}}_2$ may be empty. Let $x_1,x_2$ be outlets of $N_x(S_{\mr{dec}})$ ($x_1,x_2\ne x$). Suppose also that $S_{\mr{dec}}$ consists of $N_x(S_{\mr{dec}})$ and a block ${\mathsf{B}}$, where $x_1$ and $x_2$ are outlets of ${\mathsf{B}}$. Then $\mu_x(S)$ is s-decomposable with block ${\mathsf{B}}$, i.e.  $$\l \mu_x(N_x(S_{\mr{dec}})), {\mathsf{B}}\r=\mu_x(\l N_x(S_{\mr{dec}})), {\mathsf{B}}\r)$$

\end{lemma}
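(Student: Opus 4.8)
The plan is to deduce the identity $\langle \mu_x(N_x(S_{\mr{dec}})),\mathsf{B}\rangle=\mu_x(\langle N_x(S_{\mr{dec}}),\mathsf{B}\rangle)$ from the locality of the mutation $\mu_x$, comparing the two diagrams edge by edge; write $N:=N_x(S_{\mr{dec}})$ for short. The first point is structural: since $N$ is, by definition, the union of \emph{all} blocks containing $x$, every edge of $S=\langle N,\mathsf{B}\rangle$ incident to $x$ already occurs in $N$, and in particular $x$ is adjacent to no vertex of $\mathsf{B}$ other than $x_1,x_2$. Let $V_{\mathsf{B}}$ be the set of vertices of $\mathsf{B}$ not shared with $N$. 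No vertex of $V_{\mathsf{B}}$ is a neighbour of $x$, and every edge of $S$ incident to $V_{\mathsf{B}}$ is an edge of the block $\mathsf{B}$ (that being the only block containing such a vertex), so no such edge joins two neighbours of $x$; by the mutation rule of Figure~\ref{quivermut} these edges are untouched by $\mu_x$, and they coincide with the corresponding edges of $\langle\mu_x(N),\mathsf{B}\rangle$. Likewise the orientation reversals caused by $\mu_x$ affect only edges incident to $x$, all of which lie in $N$.

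Next I would compare the two diagrams on the edges lying in $N$. Adjoining the block $\mathsf{B}$ along the outlets $x_1,x_2$ changes, among the edges of $N$, only the edge $x_1x_2$ (it may add the $\mathsf{B}$-contribution to it); every other edge $ij$ of $N$, together with the edges $xi$ and $xj$, has the same weight and orientation in $S$ as in $N$. Hence for every edge of $N$ other than $x_1x_2$ the mutation rule yields the same result in $\mu_x(S)$ as in $\mu_x(N)$, and combined with the reorientations at $x$ (which, as noted, happen inside $N$) this shows that $\mu_x(S)$ and $\langle\mu_x(N),\mathsf{B}\rangle$ agree on all edges of $N$ except possibly $x_1x_2$, and on all edges incident to $V_{\mathsf{B}}$.

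The edge $x_1x_2$ is the only place where something must be checked, and I expect it to be the main obstacle. It is relevant only when $x_1$ and $x_2$ are both neighbours of $x$ and $\mathsf{B}$ carries an edge between them, since otherwise the $\mathsf{B}$-contribution to $x_1x_2$ commutes trivially with $\mu_x$ (the edge is then either unchanged by $\mu_x$ or the $\mathsf{B}$-contribution is absent). In the remaining case one must verify that ``first combine the contribution of $\mathsf{B}$ to $x_1x_2$ and then mutate at $x$'' gives the same edge as ``first mutate at $x$ and then combine the contribution of $\mathsf{B}$''. This reduces to a finite check: $x_1x_2$ joins two outlets both in $N$ and in $\mathsf{B}$, hence is simple (or absent) in each, so only its presence/absence and its orientation relative to $xx_1,xx_2$ vary; Lemma~\ref{good} supplies the $x$-good block decomposition of $\mu_x(N)$ on which $\mathsf{B}$ is to be re-glued (so that the re-gluing is legitimate, $x_1,x_2$ being outlets there); and one runs through the few resulting configurations, using the mutation rule $\pm\sqrt{c}\pm\sqrt{d}=\sqrt{ab}$, to see that the two outcomes coincide. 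Assembling the three steps gives the asserted equality, and in particular exhibits $\mu_x(S)$ as s-decomposable with block $\mathsf{B}$.
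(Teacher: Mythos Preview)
Your argument is correct and follows the same route as the paper's proof, which reads in its entirety: ``The s-decomposability immediately follows from Lemma~\ref{good}. The equality follows from the definition of mutation, see Fig.~\ref{quivermut}.'' You have unpacked the second sentence edge by edge, correctly isolating $x_1x_2$ as the only place where anything needs to be checked and invoking Lemma~\ref{good} exactly where it is needed.

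One remark: the finite case check you propose for the edge $x_1x_2$ can be replaced by a single observation. At the level of exchange matrices, gluing a block along outlets is literally addition of matrix entries (this is why two simple edges in the same direction give weight~$4$ while opposite ones cancel), and the mutation formula~(\ref{eq:MatrixMutation}) shows that $b'_{ij}$ for $i,j\ne x$ depends \emph{affinely} on $b_{ij}$, with the additive correction determined solely by $b_{ix},b_{xj}$. Since $\mathsf{B}$ does not touch $x$, adding its contribution to $b_{x_1x_2}$ commutes with $\mu_x$ immediately; translated back to diagrams this is precisely the identity $\pm\sqrt{c}\pm\sqrt{d}=\sqrt{ab}$ being affine in the signed root $\pm\sqrt{c}$. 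This is presumably what the paper means by ``follows from the definition of mutation,'' and it saves you the configuration-by-configuration verification.
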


The s-decomposability immediately follows from Lemma~\ref{good}. The equality follows from the definition of mutation, see Fig.~\ref{quivermut}.

As a corollary, we get the following theorem.

\begin{theorem}
\label{invariant}
Let $S$ be s-decomposable.
Then any mutation of $S$ is s-decomposable.
\end{theorem}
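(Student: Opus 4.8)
The plan is to deduce Theorem~\ref{invariant} from Lemmas~\ref{good} and~\ref{glue_in} by a localization argument: a mutation $\mu_x$ only affects edges incident to $x$, so it suffices to analyze how the blocks containing $x$ interact, while all blocks not touching $x$ are carried along unchanged. First I would set up notation: given an s-decomposable diagram $S$ with a fixed decomposition $S_{\mr{dec}}$ and a vertex $x$, write $N_x(S_{\mr{dec}})$ for the union of blocks containing $x$, and let $S'_{\mr{dec}}$ be the (possibly disconnected) union of all remaining blocks. Since blocks are glued only along outlets, the only vertices of $N_x(S_{\mr{dec}})$ shared with $S'_{\mr{dec}}$ are outlets, and by construction $x$ itself is not an outlet of $S_{\mr{dec}}$ (an outlet lies in exactly one block, so if $x$ lay in several blocks it is a dead end in each; if $x$ lies in a single block we are in the base case). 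Thus mutation at $x$ changes no edge with both endpoints outside $N_x(S_{\mr{dec}})$, and in fact changes no edge incident to the gluing outlets except possibly within $N_x(S_{\mr{dec}})$ — one must check that the edges emanating from an outlet $x_i$ of $N_x(S_{\mr{dec}})$ into the rest of $S$ are simple (true, since an edge with two outlet endpoints is simple) and that $\mu_x$ does not create a new edge from such an $x_i$ to $x$'s other neighbours outside, which follows from the mutation rule in Figure~\ref{quivermut} because $b_{x x_i}=0$.

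Next I would invoke the two lemmas. By Lemma~\ref{good}, restricting attention to $N_x(S_{\mr{dec}})$ (which is a gluing of at most two blocks through $x$), there is an $x$-good block decomposition of $\mu_x(N_x(S_{\mr{dec}}))$; in particular all outlets of $N_x(S_{\mr{dec}})$ other than $x$ remain outlets after mutation. This is exactly the hypothesis needed to reglue: the blocks of $S'_{\mr{dec}}$ were attached to $N_x(S_{\mr{dec}})$ along outlets $x_1, x_2, \dots$, and these persist as outlets of $\mu_x(N_x(S_{\mr{dec}}))_{\mr{dec}}$, so the same partial matching still makes sense. Lemma~\ref{glue_in} then guarantees not only that the regluing is legitimate (block-by-block, for each external block $\mathsf B$ attached along outlets $x_1,x_2$) but that the resulting diagram equals $\mu_x(S)$ — that is, $\mu_x$ commutes with the gluing operation, because away from $N_x(S_{\mr{dec}})$ the mutation is the identity and along the gluing outlets the weights combine in exactly the way prescribed by Figure~\ref{quivermut}. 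Assembling: $\mu_x(S)$ is glued from the $x$-good decomposition of $\mu_x(N_x(S_{\mr{dec}}))$ together with all the untouched blocks of $S'_{\mr{dec}}$, hence s-decomposable.

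Finally I would handle the base case $x$ lying in a single block: then $N_x(S_{\mr{dec}})$ is one block, possibly glued to neighbours along outlets, and the same argument applies with $\mathsf B_2 = \emptyset$ in Lemmas~\ref{good} and~\ref{glue_in}; alternatively one checks directly that mutating a single-vertex or small block at a vertex keeps it s-decomposable. I would also note the degenerate possibility that $S$ is disconnected — a disjoint union of mutually orthogonal s-decomposable pieces — in which case $\mu_x$ acts only on the component containing $x$ and the statement for that component gives the result.

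The main obstacle I anticipate is not in the structural argument, which is essentially bookkeeping, but in making precise and correct the claim that mutation at $x$ is "local" in the required sense — specifically, verifying that $\mu_x$ does not interact with the external blocks through the gluing outlets in any way not already captured by Lemma~\ref{glue_in}. The delicate point is that an outlet $x_i$ of $N_x(S_{\mr{dec}})$ may be adjacent to $x$ inside $N_x(S_{\mr{dec}})$, so mutation at $x$ does change the edges at $x_i$; one must confirm that these changes are consistent with $x_i$ still being an outlet of whatever external block it belongs to, i.e. that the combined edge-weight/orientation at the reglued outlet after mutation is what the gluing rule (cancellation of opposite simple edges, or formation of a weight-$4$ edge) produces. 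This is precisely what Lemma~\ref{glue_in}'s displayed equality asserts, so the proof reduces to citing it — but I would want to state clearly that the hypothesis of Lemma~\ref{glue_in} (that $x_1, x_2$ are outlets of the external block $\mathsf B$) is met for every external block, which is immediate from the fixed decomposition of $S$.
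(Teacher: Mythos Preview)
Your proposal is correct and follows essentially the same approach as the paper: localize to $N_x(S_{\mr{dec}})$, apply Lemma~\ref{good} to obtain an $x$-good decomposition of $\mu_x(N_x(S_{\mr{dec}}))$, then use Lemma~\ref{glue_in} to reattach the remaining blocks unchanged. The paper's proof is considerably more terse (three sentences), while you spell out the locality argument and the role of the outlets in more detail; your remark that the delicate point is the interaction at outlets adjacent to $x$, and that this is exactly what the displayed equality in Lemma~\ref{glue_in} handles, is precisely the content the paper leaves implicit.
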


Proof follows from Lemma~\ref{glue_in}. Indeed, given decomposition of $S$ and $x\in S$, $\mu_x$ affects only $N_x(S_{\mr{dec}})$ and blocks with at least two points in common with $N_x(S_{\mr{dec}})$. According to Lemma~\ref{good}, $\mu_x(N_x(S_{\mr{dec}}))$ admits $x$-good decomposition. By Lemma~\ref{glue_in}, we can construct a decomposition of $\mu_x(S)$ by attaching to $x$-good decomposition of $\mu_x(N_x(S_{\mr{dec}}))$ the same blocks as in $S_{\mr{dec}}$ in the same way.

\begin{cor}
\label{mut-fin}
All s-decomposable diagrams are mutation-finite.
\end{cor}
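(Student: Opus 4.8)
The plan is to deduce this immediately from Theorem~\ref{invariant} together with a crude finiteness count on the set of all s-decomposable diagrams with a fixed number of vertices. First I would observe that matrix mutation, hence diagram mutation, preserves the number of vertices $n$; so by Theorem~\ref{invariant} the entire mutation class of an s-decomposable diagram $S$ is contained in the set $\mathcal{D}_n$ of s-decomposable diagrams on $n$ vertices. It therefore suffices to show that $\mathcal{D}_n$ is finite.

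To see that $\mathcal{D}_n$ is finite, I would argue as follows. Every diagram in $\mathcal{D}_n$ arises (by definition) from some block decomposition: a collection of blocks glued along a partial matching of their outlets. There are only finitely many isomorphism types of blocks (types $\rm{I}$--$\rm{V}$ of Figure~\ref{bloki} together with the new blocks of Table~\ref{newblocks}), and each has a bounded number of vertices, say at most $M$. In a gluing along a partial matching, each outlet is identified with at most one other outlet, so every vertex of the resulting diagram lies in at most two blocks; counting vertex--block incidences, a diagram on $n$ vertices is glued from at most $2n$ blocks (each block contributes at least one vertex). Hence an s-decomposable diagram on $n$ vertices is obtained from one of finitely many multisets of at most $2n$ blocks, glued according to one of finitely many partial matchings of their (finitely many) outlets. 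Since the diagram is determined up to isomorphism by this data, $\mathcal{D}_n$ is finite.

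Combining the two observations, the mutation class of $S$ is a subset of the finite set $\mathcal{D}_{|S|}$, hence finite; that is, $S$ is mutation-finite. I do not anticipate a real obstacle here: the only point requiring a word of care is that a diagram may admit several block decompositions, but this is harmless for the counting, since we only need an upper bound on the number of diagrams, not a bijection with decompositions. (For the disconnected case one applies the same argument componentwise, using that an s-decomposable disconnected diagram is a disjoint union of mutually orthogonal s-decomposable diagrams, and mutations act within a single component.)
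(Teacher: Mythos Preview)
Your argument is correct: Theorem~\ref{invariant} confines the mutation class to s-decomposable diagrams on the same vertex set, and your block-counting bound shows there are only finitely many of those. The paper states the corollary without proof, so any valid filling-in is acceptable.

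That said, the paper almost certainly has in mind the shorter route through Theorem~\ref{less3}. Every block (old or new) has edge weights in $\{1,2,4\}$, and the gluing rule only creates edges of weight $4$ or cancels edges; hence every s-decomposable diagram has all edge weights at most~$4$. By Theorem~\ref{invariant} the same holds throughout the mutation class, and Theorem~\ref{less3} then gives mutation-finiteness directly (for order~$\ge 3$; orders $1$ and $2$ are trivially mutation-finite, and disconnected diagrams reduce componentwise). Your counting argument avoids Theorem~\ref{less3} entirely and is more self-contained, at the cost of a slightly longer combinatorial bound; the paper's intended route is a one-line appeal to a criterion it has already established.
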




\begin{remark}
\label{block6}
As one can notice, the block~${\t{\mr{VI}}}$ has no outlets. However, it is essential: its mutation class consists of $4$ diagrams, $3$ of them are s-decomposable (without making use of block~${\t{\mr{VI}}}$), and the fourth one is block~${\t{\mr{VI}}}$ itself (which cannot be decomposed in any other way).
\end{remark}

\section{Unfoldings of matrices and diagrams}
\label{unfolding-s}

Let $B$ be an indecomposable $n\times n$ skew-symmetrizable integer matrix, and let $BD$ be a skew-symmetric matrix, where $D=(d_{i})$ is diagonal integer matrix with positive diagonal entries. Notice that for any matrix $\mu_i(B)$ the matrix $\mu_i(B)D$ will be skew-symmetric.

We use the following definition of unfolding of a skew-symmetrizable matrix (communicated to us by A.~Zelevinsky).

Suppose that we have chosen disjoint index sets $E_1,\dots, E_n$ with $|E_i| =d_i$. Denote $m=\sum\limits_{i=1}^n d_i$.
Suppose also that we choose a skew-symmetric integer matrix $C$ of size $m\times m$ with rows and columns indexed by the union of all $E_i$, such that

(1) the sum of entries in each column of each $E_i \times E_j$ block of $C$ equals $b_{ij}$;

(2) if $b_{ij} \geq 0$ then the $E_i \times E_j$ block of $C$ has all entries non-negative.

Define a {\it composite mutation} $\h\mu_i = \prod_{\hat\imath \in E_i} \mu_{\hat\imath}$ on $C$. This mutation is well-defined, since all the mutations  $\mu_{\hat\imath}$, $\hat\imath\in E_i$, for given $i$ commute.

We say that $C$ is an {\it unfolding} for $B$ if $C$ satisfies assertions $(1)$ and $(2)$ above, and for any sequence of iterated mutations $\mu_{k_1}\dots\mu_{k_m}(B)$ the matrix $C'=\h\mu_{k_1}\dots\h\mu_{k_m}(C)$ satisfies assertions $(1)$ and $(2)$ with respect to $B'=\mu_{k_1}\dots\mu_{k_m}(B)$.

\begin{example}
The matrix $C$ below is an unfolding for the matrix $B$. Here $d_1=1$, $d_2=2$, $E_1=\{1\}$, $E_2=\{2,3\}$.
$$
B=\begin{pmatrix}
0&-1\\
2&0
\end{pmatrix}
\qquad\qquad
C=\begin{pmatrix}
0&-1&-1\\
1&0&0\\
1&0&0
\end{pmatrix}
$$
\end{example}

\begin{example}
\label{good-unf}
The matrices $B$ and $C$ below satisfy the assertions (1) and (2) of the definition of the unfolding. Here $d_1=2$, $d_2=1$, $d_3=2$, $E_1=\{1,2\}$, $E_2=\{3\}$, $E_3=\{4,5\}$.
$$
B=\begin{pmatrix}
0&2&-2\\
-1&0&1\\
2&-2&0
\end{pmatrix}
\qquad\qquad
C=\begin{pmatrix}
0&0&1&-2&0\\
0&0&1&0&-2\\
-1&-1&0&1&1\\
2&0&-1&0&0\\
0&2&-1&0&0
\end{pmatrix}
$$
However, the matrix $C$ is not an unfolding for the matrix $B$. Indeed, after mutation $\mu_2$ of $B$ (resp, $\mu_3$ of $C$), the assertion (2) does not hold for block $E_1\times E_3$ of $\mu_3(C)$.
\end{example}

\smallskip

If $C$ is an unfolding of a skew-symmetrizable integer matrix $B$, it is natural to define an {\it unfolding of a diagram} of $B$ as a diagram of $C$. In general, we say that a diagram $\h S$ is an unfolding of a diagram $S$ if there exist matrices $B$ and $C$ with diagrams $S$ and $\h S$ respectively, and $C$ is an unfolding of $B$. This definition is equivalent to the following one.

\begin{definition}
Let $S$ be a diagram with vertices $x_1,\dots,x_n$, and let $d_1,\dots,d_n$ be positive integers. Let $\h S$ be a connected skew-symmetric diagram with vertices $x_{\hat\imath}$ indexed by sets $E_i$ of order $d_i$, such that for each $i,j\in\[1\dots n\]$ the following holds:

(A) there are no edges joining vertices inside $E_i$ and $E_j$;

(B) for all $\hat\imath\in E_i$ the sum of weights of all edges joining $x_{\hat\imath}$ with $E_j$ is the same, and all the arrows are oriented simultaneously either from $E_i$ to $E_j$ or from $E_j$ to $E_i$;

(C) the product of total weight of edges joining $x_{\hat\imath}$ with $E_j$ and total weight of edges joining $x_{\hat\jmath}$ with $E_i$ equals the weight of $x_ix_j$.

Define a {\it composite mutation} $\h\mu_i = \prod_{\hat\imath \in E_i} \mu_{\hat\imath}$ on $\h S$. As in the case of matrices, the mutation is well-defined. We say that $\h S$ is an {\it unfolding} of $S$ if for any sequence of iterated mutations $\mu_{i_1}\dots\mu_{i_k}$ a pair of diagrams $(\mu_{i_1}\dots\mu_{i_k}\! S,\,\h\mu_{i_1}\dots\h\mu_{i_k}\!\h S\,)$ satisfies the same conditions as the pair $(S,\h S)$ does, i.e. for each $i,j\le n$ the assumptions (A), (B) and (C) hold.

\end{definition}

The following example shows that an unfolding of a diagram may not be unique.

\begin{example}

Diagram \quad
\psfrag{2-}{\small $2$}
\psfrag{1}{\tiny $1$}
\psfrag{2}{\tiny $2$}
\psfrag{3}{\tiny $3$}
\raisebox{-0.37cm}[5mm][5mm]{\epsfig{file=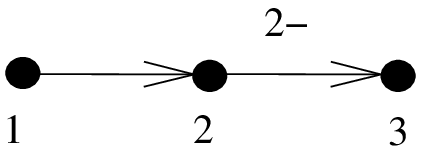,width=0.17\linewidth}}
\quad corresponds to two matrices
$$
\begin{pmatrix}
0&1&0\\
-1&0&1\\
0&-2&0
\end{pmatrix}
\qquad{\mr{and}}\qquad
\begin{pmatrix}
0&1&0\\
-1&0&2\\
0&-1&0
\end{pmatrix}
$$
with unfoldings, respectively,
$$
\begin{pmatrix}
0&1&0&0\\
-1&0&1&1\\
0&-1&0&0\\
0&-1&0&0
\end{pmatrix}
\qquad{{\mr{and}}}\qquad
\begin{pmatrix}
0&0&1&0&0\\
0&0&0&1&0\\
-1&0&0&0&1\\
0&-1&0&0&1\\
0&0&-1&-1&0
\end{pmatrix}
$$

It is easy to see that these two unfoldings correspond, respectively, to diagrams
$$\psfrag{2-}{\small $2$}
\psfrag{1}{\tiny $1$}
\psfrag{2}{\tiny $2$}
\psfrag{3}{\tiny $3$}
\psfrag{4}{\tiny $4$}
\psfrag{5}{\tiny $5$}
{\epsfig{file=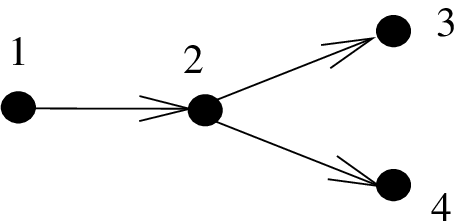,width=0.2\linewidth}}\qquad\quad\raisebox{0.52cm}{{\rm{and}}}\quad\qquad \raisebox{0.52cm}{\epsfig{file=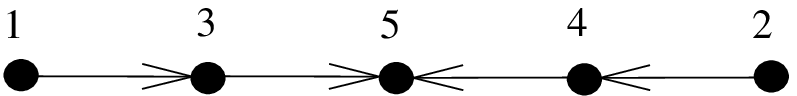,width=0.3\linewidth}}$$

\end{example}

\begin{lemma}
\label{blocks-un}
The diagrams in the second row of Table~\ref{newblocks} are unfoldings of the corresponding blocks shown in the first row of the table.

\end{lemma}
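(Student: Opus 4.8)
The plan is to verify, for each of the seven new blocks $\mr{\t{III}a}$, $\mr{\t{III}b}$, $\t{\mr{IV}}$, $\t{\mr V}_1$, $\t{\mr V}_2$, $\t{\mr V}_{12}$, $\t{\mr{VI}}$, that the skew-symmetric diagram $\h S$ drawn beneath it in Table~\ref{newblocks} satisfies Definition~4.5 with respect to the block $S$. This breaks into two tasks: (i) checking the static conditions (A), (B), (C) for the pair $(S,\h S)$ together with the prescribed partition of the vertices of $\h S$ into the sets $E_i$ (the vertex $x_i$ and the set $E_i$ being marked identically in the table), and (ii) checking that these conditions are preserved under every composite mutation, so that $\h S$ is genuinely an unfolding and not merely a candidate as in Example~\ref{good-unf}.

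For task (i) I would simply read off, block by block, the weights and orientations. In each new block there is exactly one ``doubled'' vertex $x_i$ with $d_i=2$ (for $\t{\mr V}_{12}$ there are two such vertices), and all other vertices have $d_i=1$; condition (A) holds because the corresponding $E_i$'s in $\h S$ carry no internal edges by construction. Condition (B) is the statement that the two vertices of a doubled $E_i$ see any other $E_j$ symmetrically, with consistently oriented arrows — this is visible in each picture. Condition (C) then recovers the weight of the edge $x_ix_j$ of the block as the product of the two total weights across the unfolding; since a weight-$2$ edge of the block lifts to a single simple arrow on each of the two preimages (product $1\cdot 2=2$), and the weight-$4$ edge in $\t{\mr V}_{12}$ lifts to a pair of simple arrows on each side (product $2\cdot 2=4$), this matches. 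This part is a finite, routine inspection.

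For task (ii), the efficient route is \emph{not} to chase arbitrary mutation sequences directly but to observe that each new block has very few distinct diagrams in its mutation class, and to exploit the symmetry. Concretely: the permutation of the vertices of $\h S$ that swaps the two elements of each doubled $E_i$ (and fixes the singletons) is an automorphism of $\h S$; since composite mutations $\h\mu_i=\prod_{\hat\imath\in E_i}\mu_{\hat\imath}$ commute with this automorphism, every diagram in the $\h\mu$-mutation class of $\h S$ retains that symmetry, which is exactly what conditions (A), (B) are about, and then (C) is forced by the arithmetic in Fig.~\ref{quivermut}. So it suffices to run through the (short) list of diagrams obtained from each block by iterated mutation and check in each that the corresponding $\h S$-diagram, obtained by the matching composite mutations, still decomposes into the $E_i$'s compatibly. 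Because the mutation classes of these blocks are small (for $\t{\mr{VI}}$, as noted in Remark~\ref{block6}, it is $4$ diagrams; the others are comparably small), this is a bounded check. One must also confirm that performing $\h\mu_i$ on $\h S$ induces exactly $\mu_i$ on $S$ at the level of weights — again immediate from condition (1)/(2) of the matrix definition translated through Fig.~\ref{quivermut}, since the column sums of the $E_i\times E_j$ blocks are preserved by composite mutation.

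The main obstacle is bookkeeping rather than conceptual: one has to be careful that a composite mutation $\h\mu_i$ applied to $\h S$ does not create edges inside some $E_k$ (breaking (A)) or unbalance the weights across a doubled vertex (breaking (B)) — precisely the failure mode of Example~\ref{good-unf}. The symmetry argument above rules out the second failure automatically; the first must be checked, but since within a single block the only doubled set adjacent to anything is $E_i$ itself and its neighbours are singletons, no internal edge can appear. Hence the verification reduces to the finite case analysis described, and the lemma follows.
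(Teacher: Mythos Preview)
Your proposal is correct and aligns with the paper's own proof, which consists of a single sentence: ``The proof consists of an elementary straightforward verification.'' You have simply fleshed out what that verification looks like, and your symmetry observation (that the involution swapping the two elements of each doubled $E_i$ is an automorphism of $\h S$ commuting with every composite mutation, hence forcing (A) and (B) to persist, after which (C) follows from the column-sum computation) is a clean way to organize the check.

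One small inaccuracy: your claim that for each block ``its neighbours are singletons'' fails for $\t{\mr V}_{12}$, where the two doubled sets $E_1$ and $E_3$ are adjacent to one another in the unfolding. This does not damage your argument, however, since your symmetry argument already disposes of condition (A) in full generality: an edge internal to a doubled $E_i$ would have to be fixed by the swap and hence vanish by skew-symmetry. So the extra remark about singleton neighbours is superfluous rather than wrong in a way that matters.
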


The proof consists of an elementary straightforward verification. We call the unfoldings of blocks shown in the second row of Table~\ref{newblocks} {\it local unfoldings}. They can be characterized as follows:
\begin{definition}
\label{def-simpl}
An unfolding is {\it local} if for any outlet $x_i$ of the initial skew-sym\-met\-rizable diagram, the corresponding integer $d_i$ is equal to one.
\end{definition}
This allows us to define for each s-decomposable diagram $S$ with fixed decomposition $S_{\mr{dec}}$ a skew-symmetric diagram (denote it by $\tau(S_{\mr{dec}})$) by gluing of unfoldings of corresponding blocks. Since all the local unfoldings of blocks are skew-symmetric blocks, $\tau(S_{\mr{dec}})$ is block-decomposable diagram. In other words, we may understand $\tau$ as a map from block decompositions of s-decomposable diagrams to block decompositions of block-decomposable ones. Our current goal is to prove Theorem~\ref{unfolding} which states that $\tau(S_{\mr{dec}})$ is an unfolding for $S$.

\begin{lemma}
\label{comm}
Let $S_{\mr{dec}}$ coincide with $N_x(S_{\mr{dec}})$ (i.e. $S_{\mr{dec}}$ is composed of blocks ${\mathsf{B}}_1$ and ${\mathsf{B}}_2$, ${\mathsf{B}}_2$ may be empty), $x\in S$, where $x\in {\mathsf{B}}_1\cap {\mathsf{B}}_2$ if ${\mathsf{B}}_2\ne\emptyset$.
Suppose also that $S_{\mr{dec}}$ is different from ones shown on Fig.~\ref{exclusion2}. Then there exists an $x$-good decomposition of $\mu_x(S)$, such that $$\tau({\mu_x(S)}_{\mr{dec}})=\h\mu_{x}(\tau(S_{\mr{dec}}))$$
\end{lemma}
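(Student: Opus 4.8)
The statement to prove is Lemma~\ref{comm}: for an s-decomposable diagram $S_{\mr{dec}}=N_x(S_{\mr{dec}})$ built from at most two blocks sharing the vertex $x$ (and not one of the exceptional configurations of Fig.~\ref{exclusion2}), there is an $x$-good block decomposition of $\mu_x(S)$ for which $\tau$ commutes with mutation, i.e. $\tau(\mu_x(S)_{\mr{dec}})=\h\mu_x(\tau(S_{\mr{dec}}))$. The natural approach is a finite case check, exactly parallel to the proof of Lemma~\ref{good} (which examines the $49$ gluings of two blocks): we already know from Lemma~\ref{good} that an $x$-good decomposition $\mu_x(S)_{\mr{dec}}$ exists, so the only new content is the commutation identity, and this must be verified block-configuration by block-configuration.

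First I would organize the case analysis. Up to symmetry the data consists of an unordered pair of blocks $\{{\mathsf B}_1,{\mathsf B}_2\}$ from the old blocks ($\mr{I}$--$\mr{V}$) together with the new ones ($\mr{\t{III}a}$, $\mr{\t{III}b}$, $\t{\mr{IV}}$, $\t{\mr V}_1$, $\t{\mr V}_2$, $\t{\mr V}_{12}$, $\t{\mr{VI}}$), glued along one of their common outlets, together with a choice of the mutating vertex $x$ lying in ${\mathsf B}_1\cap{\mathsf B}_2$ (or in the single block if ${\mathsf B}_2=\emptyset$). For the single-block cases the identity is immediate from Lemma~\ref{blocks-un}, which says $\tau$ of each block is its tabulated local unfolding, together with the fact that $\h\mu_x$ of that unfolding is again a block unfolding realizing $\mu_x$ of the block — this is the elementary verification behind Lemma~\ref{blocks-un} applied after one composite mutation. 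The substantive cases are genuine gluings of two blocks along an outlet; here I would note that $\tau(S_{\mr{dec}})$ is, by construction, the union of the local unfoldings of ${\mathsf B}_1$ and ${\mathsf B}_2$ glued along the (single, since outlet--outlet) vertex $x$ or along the fiber $E_{x_1},E_{x_2}$ over the shared outlets, and $\h\mu_x=\prod_{\hat x\in E_x}\mu_{\hat x}$ acts only on that glued subdiagram. One then computes both sides: the left side is obtained by first mutating the small diagram $S$ at $x$, reading off the $x$-good decomposition supplied by Lemma~\ref{good}, and taking $\tau$; the right side is obtained by performing the composite mutation on the glued unfolding directly. Since mutations of diagrams are governed by the local rule of Fig.~\ref{quivermut} (and $\h\mu_x$ is a product of commuting ordinary mutations by condition (A)/the commutativity remark), both computations are finite and mechanical, and one checks they agree.

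**The exceptional configurations.** The role of the hypothesis ``$S_{\mr{dec}}$ different from ones shown on Fig.~\ref{exclusion2}'' is precisely that for those gluings the commutation fails for the decomposition produced by Lemma~\ref{good} — typically because $\mu_x$ of the glued diagram forces a re-decomposition in which a block boundary migrates, so $\tau$ of the new decomposition differs from $\h\mu_x$ of the old unfolding (e.g.\ an edge of weight $4$ appears or disappears, analogous to the phenomenon in Remark~\ref{block6}, or the situation of Example~\ref{good-unf} where (2) breaks). So the write-up should explicitly exclude exactly those, confirm that for every other pair-of-blocks configuration the identity holds, and — if needed later — handle the excluded ones separately (the paper presumably does this in the subsequent argument for Theorem~\ref{unfolding}).

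**Main obstacle.** The difficulty is not conceptual but combinatorial bookkeeping: enumerating the relevant gluings without omission, keeping track of orientations and the sign conventions in Fig.~\ref{quivermut} when computing $\mu_x(S)$, and matching the resulting diagram against the correct $x$-good decomposition from Lemma~\ref{good} so that one is comparing the right two block-decompositions. A secondary subtlety is the gluing rule for two weight-one edges (cancel if opposite, merge to weight $4$ if parallel), which interacts with the unfolding fibers and is exactly what goes wrong in the Fig.~\ref{exclusion2} cases; one must be careful that in all non-excluded cases this rule is applied consistently on both sides of the claimed equality. Thus the proof reduces to: (i) reduce to finitely many two-block gluings with a marked shared vertex $x$; (ii) dispose of single-block cases via Lemma~\ref{blocks-un}; (iii) for each remaining non-excluded configuration, compute $\mu_x(S)$, identify its $x$-good decomposition, apply $\tau$, compute $\h\mu_x(\tau(S_{\mr{dec}}))$, and observe the two coincide.
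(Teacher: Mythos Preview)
Your approach is correct and matches the paper's: the proof is a finite case-by-case verification running through the same $49$ gluings considered in Lemma~\ref{good}, checking in each (non-excluded) case that the $x$-good decomposition satisfies the commutation identity. The paper adds only that this verification in fact subsumes the proof of Lemma~\ref{good} rather than merely building on it, but the content is the same direct check you outline.
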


\begin{figure}[!h]
\begin{center}
\psfrag{2-}{\tiny $2$}
\parbox[c]{1.8cm}{\epsfig{file=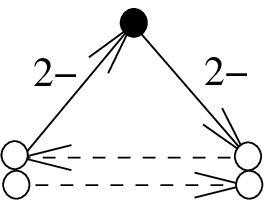,width=1.0\linewidth}}
\qquad\qquad
\psfrag{2-}{\tiny $2$}
\parbox[c]{1.8cm}{\epsfig{file=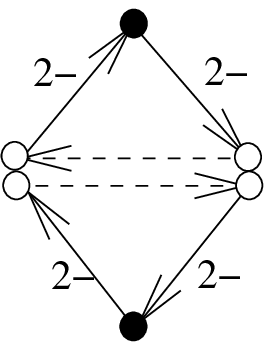,width=1.0\linewidth}}
\caption{Exceptional s-decomposable diagrams. Dotted edges are the edges of blocks disappearing in the diagram.}
\label{exclusion2}
\end{center}
\end{figure}

The proof considers the same cases as in the proof of Lemma~\ref{good} (in fact, this consideration includes proof of Lemma~\ref{good} as a partial case).

Combining Lemmas~\ref{glue_in} and~\ref{comm}, we get the following lemma.

\begin{lemma}
\label{unfolding-l}
Let $S$ be s-decomposable, and $x\in S_{\mr{dec}}$. If $S_{\mr{dec}}$ is different from ones shown on Fig.~\ref{exclusion2}, then there exists a decomposition of $\mu_x(S)$, such that $$\tau({\mu_x(S)}_{\mr{dec}})=\h\mu_{x}(\tau(S_{\mr{dec}}))$$

\end{lemma}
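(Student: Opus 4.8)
The plan is to deduce Lemma~\ref{unfolding-l} from Lemmas~\ref{glue_in} and~\ref{comm} by a ``stability'' argument, exactly parallel to the way Theorem~\ref{invariant} was deduced from Lemmas~\ref{good} and~\ref{glue_in}. First I would observe that for a fixed decomposition $S_{\mr{dec}}$ and a fixed vertex $x$, the mutation $\mu_x$ affects only the subdiagram $N_x(S_{\mr{dec}})$ together with those blocks ${\mathsf B}$ sharing at least two vertices with $N_x(S_{\mr{dec}})$; by the block axioms such shared vertices are outlets of ${\mathsf B}$, and the edges of ${\mathsf B}$ among these outlets are simple, so the local unfolding $\tau$ is ``transparent'' to these outlets (the corresponding $d_i$ equal $1$ by Definition~\ref{def-simpl}). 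Concretely, $\tau(S_{\mr{dec}})$ is glued from $\tau(N_x(S_{\mr{dec}}))$ and the unfoldings of the remaining blocks along the same matching of outlets; the same holds after mutation once we know what happens to $N_x(S_{\mr{dec}})$ alone.

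The core step is then: apply Lemma~\ref{comm} to $N_x(S_{\mr{dec}})$ (which, by hypothesis, equals $\l {\mathsf B}_1,{\mathsf B}_2\r$ and is not one of the two exceptional diagrams of Fig.~\ref{exclusion2}) to get an $x$-good decomposition of $\mu_x(N_x(S_{\mr{dec}}))$ with $\tau(\mu_x(N_x(S_{\mr{dec}}))_{\mr{dec}}) = \h\mu_x(\tau(N_x(S_{\mr{dec}})))$. Because the decomposition is $x$-good, all outlets $x_1,x_2,\dots$ of $N_x(S_{\mr{dec}})$ distinct from $x$ remain outlets after mutation, so we may reglue the blocks ${\mathsf B}$ of $S_{\mr{dec}}$ not meeting $N_x(S_{\mr{dec}})$ in more than the shared outlets, exactly as in the proof of Theorem~\ref{invariant}; Lemma~\ref{glue_in} guarantees that the resulting diagram is indeed $\mu_x(S)$ with a valid decomposition, i.e.\ $\l \mu_x(N_x(S_{\mr{dec}})),{\mathsf B}\r = \mu_x(\l N_x(S_{\mr{dec}}),{\mathsf B}\r)$. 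Now I take $\tau$ of both sides: gluing of unfoldings commutes with gluing of blocks along outlets (since those outlets unfold trivially), so $\tau(\mu_x(S)_{\mr{dec}})$ is obtained from $\tau(\mu_x(N_x(S_{\mr{dec}}))_{\mr{dec}}) = \h\mu_x(\tau(N_x(S_{\mr{dec}})))$ by attaching the unfoldings of the untouched blocks. On the other side, $\h\mu_x$ on $\tau(S_{\mr{dec}})$ is a composite mutation $\prod_{\hat\imath\in E_x}\mu_{\hat\imath}$ supported on the vertices over $x$, all of which lie inside $\tau(N_x(S_{\mr{dec}}))$ (again $d_x$ could be $>1$ only if $x$ is a dead end, hence internal), so $\h\mu_x$ leaves the glued-on unfoldings untouched and acts on $\tau(N_x(S_{\mr{dec}}))$ as claimed. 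Comparing the two expressions gives $\tau(\mu_x(S)_{\mr{dec}}) = \h\mu_x(\tau(S_{\mr{dec}}))$.

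The main obstacle I anticipate is the bookkeeping at the interface: I must check that the composite mutation $\h\mu_x$ genuinely decomposes as ``(composite mutation on $\tau(N_x(S_{\mr{dec}}))$) $\otimes$ (identity on everything glued on)'', which requires that no vertex of $E_x$ is adjacent in $\tau(S_{\mr{dec}})$ to a vertex lying strictly outside $\tau(N_x(S_{\mr{dec}}))$ — equivalently that $x$ is not itself a shared outlet, which is why Lemma~\ref{comm} is stated with $x\in {\mathsf B}_1\cap {\mathsf B}_2$ and the whole diagram equal to $N_x(S_{\mr{dec}})$. I also need to confirm that ``$x$-good'' is precisely the condition making the regluing of Lemma~\ref{glue_in} legal after mutation, and that the excluded diagrams of Fig.~\ref{exclusion2} are the only obstructions — but both of these are already packaged into Lemmas~\ref{comm} and~\ref{glue_in}, so once the interface decomposition is justified the proof is a short diagram chase. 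In the write-up I would simply say: ``Combining Lemmas~\ref{glue_in} and~\ref{comm}: apply Lemma~\ref{comm} to $N_x(S_{\mr{dec}})$ and then attach the remaining blocks via Lemma~\ref{glue_in}; since $\tau$ acts trivially on the gluing outlets and $\h\mu_x$ is supported on $E_x\subset\tau(N_x(S_{\mr{dec}}))$, the identity $\tau(\mu_x(S)_{\mr{dec}})=\h\mu_x(\tau(S_{\mr{dec}}))$ follows.''
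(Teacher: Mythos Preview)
Your proposal is correct and follows exactly the paper's approach: the paper's entire proof is the single sentence ``Combining Lemmas~\ref{glue_in} and~\ref{comm}, we get the following lemma,'' and your write-up simply unpacks that combination with the same bookkeeping (apply Lemma~\ref{comm} to $N_x(S_{\mr{dec}})$, then reattach the remaining blocks via Lemma~\ref{glue_in}, using that $\tau$ is trivial on outlets and $\h\mu_x$ is supported over $x$).
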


As a corollary, we obtain the unfolding theorem for diagrams.

\begin{theorem}
\label{unfolding}
Every s-decomposable diagram has a block-decomposable unfolding.

\end{theorem}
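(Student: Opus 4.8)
The plan is to build the unfolding explicitly as $\tau(S_{\mathrm{dec}})$ — the skew-symmetric diagram obtained by gluing together the local unfoldings of the individual blocks (old and new) listed in Table~\ref{newblocks} — and then to verify that this candidate genuinely satisfies the defining property of an unfolding: stability of conditions (A), (B), (C) under arbitrary sequences of mutations. The crucial input is Lemma~\ref{unfolding-l}, which says that whenever $S_{\mathrm{dec}}$ is \emph{not} one of the two exceptional configurations of Fig.~\ref{exclusion2}, there is a block decomposition of $\mu_x(S)$ for which the fundamental intertwining relation $\tau(\mu_x(S)_{\mathrm{dec}}) = \widehat\mu_x(\tau(S_{\mathrm{dec}}))$ holds. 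First I would note that $\tau(S_{\mathrm{dec}})$ is block-decomposable by construction (each local unfolding is itself a skew-symmetric block, and the gluing respects outlets because all local unfoldings are local, so outlets keep $d_i = 1$), and that it satisfies (A), (B), (C) relative to $S$ by Lemma~\ref{blocks-un} applied block by block, together with the compatibility of gluing with these conditions.

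The core of the argument is an induction on the length of a mutation sequence $\mu_{i_1}\cdots\mu_{i_k}$. Suppose inductively that $S' = \mu_{i_2}\cdots\mu_{i_k}(S)$ is s-decomposable with a decomposition $S'_{\mathrm{dec}}$ satisfying $\tau(S'_{\mathrm{dec}}) = \widehat\mu_{i_2}\cdots\widehat\mu_{i_k}(\tau(S_{\mathrm{dec}}))$, and that this last diagram — being a composite mutation of $\tau(S_{\mathrm{dec}})$, hence block-decomposable by Theorem~\ref{invariant} (invariance of block-decomposability under ordinary mutations, applied to each $\mu_{\hat\imath}$) — is an honest unfolding of $S'$ at least through the mutations already performed. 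Applying $\mu_{i_1}$: if the relevant local configuration $N_{i_1}(S'_{\mathrm{dec}})$ is not exceptional, Lemma~\ref{unfolding-l} directly produces the decomposition of $\mu_{i_1}(S')$ with $\tau(\mu_{i_1}(S')_{\mathrm{dec}}) = \widehat\mu_{i_1}(\tau(S'_{\mathrm{dec}}))$, and since a composite mutation of a block-decomposable skew-symmetric diagram is again block-decomposable and skew-symmetric, the new $\tau$ continues to satisfy (A), (B), (C) with respect to $\mu_{i_1}(S')$. This closes the induction, and establishes that $\tau(S_{\mathrm{dec}})$ is an unfolding of $S$.

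The main obstacle is the exceptional diagrams of Fig.~\ref{exclusion2}, where the clean intertwining relation of Lemma~\ref{unfolding-l} fails: there, mutating in $x$ makes some block edges vanish, so $\mu_x(S)$ need not admit a decomposition \emph{extending} the old one in the naive way, and $\widehat\mu_x(\tau(S_{\mathrm{dec}}))$ may not equal $\tau$ of any block decomposition of $\mu_x(S)$. The remedy is to handle these finitely many small cases by hand: one checks directly that $\mu_x(S)$ is still s-decomposable, exhibits an explicit (different) block decomposition for it, and verifies that the composite mutation $\widehat\mu_x$ applied to $\tau(S_{\mathrm{dec}})$ still yields a block-decomposable skew-symmetric diagram satisfying (A), (B), (C) relative to $\mu_x(S)$ — even though it need not be $\tau$ of the new decomposition. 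Since there are only two exceptional configurations (up to the obvious symmetries) and each is of very small order, this is a finite, if slightly tedious, verification; once it is in place the induction goes through uniformly and the theorem follows.
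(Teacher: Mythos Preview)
Your overall strategy --- build $\tau(S_{\mathrm{dec}})$ from local unfoldings and verify via Lemma~\ref{unfolding-l} that it intertwines mutations --- is exactly the paper's approach. Where your argument diverges from the paper, and where there is a genuine gap, is in the handling of the exceptional configurations of Fig.~\ref{exclusion2}.

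You treat the exceptional case as something that could interrupt the induction for a general $S$ mid-sequence, proposing to patch it by checking that $\h\mu_x(\tau(S_{\mathrm{dec}}))$ still satisfies (A), (B), (C) even when it is no longer $\tau$ of any decomposition of $\mu_x(S)$. But once that identity fails, your inductive hypothesis is lost and you cannot invoke Lemma~\ref{unfolding-l} at the next step; verifying (A), (B), (C) for a single mutation does not establish the full unfolding property. The paper's key observation, which you miss, is that both exceptional configurations have \emph{no outlets}. Hence if $N_x(S'_{\mathrm{dec}})$ is ever exceptional, it equals all of $S'$: the exceptional case is confined to exactly two small mutation classes and cannot arise for any other $S$. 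So for every s-decomposable diagram outside these two classes, Lemma~\ref{unfolding-l} applies at every step and your induction runs cleanly with no patching needed.

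For the two exceptional classes themselves the paper does not patch the induction either. For the left diagram of Fig.~\ref{exclusion2} it simply chooses a \emph{different} initial block decomposition (two blocks of type $\t{\mathrm{III}}$), which is non-exceptional, so Lemma~\ref{unfolding-l} applies directly. For the right diagram, whose mutation class has only three elements, it writes down the three unfoldings explicitly (Table~\ref{excl2}) and checks they are block-decomposable and related by composite mutations --- an exhaustive verification of the entire mutation class, not a one-step patch.
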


\begin{proof}
For diagrams that are not mutation-equivalent to ones shown on Fig.~\ref{exclusion2} the statement follows from Lemma~\ref{unfolding-l} (note that these two diagrams have no outlets, so they do not affect other mutation classes). Now consider the two mutation classes represented by the diagrams shown on Fig.~\ref{exclusion2}.

The left diagram has another block decomposition: it can be glued from two blocks of type $\mr{\t{III}}$. Starting from this decomposition, we get an unfolding according to Lemma~\ref{unfolding-l}.

Mutation class of the right diagram from Fig.~\ref{exclusion2} consists of three diagrams. Unfoldings are shown in Table~\ref{excl2}. All of them are block-decomposable: they can be glued either from two blocks of type $\mr{{IV}}$ (diagrams on the left and on the right), or from four blocks of type $\mr{{II}}$ (the one  in the middle).

\end{proof}

\begin{table}
\caption{Exceptional s-decomposable diagrams and their unfoldings}
\label{excl2}
\begin{tabular}{|l|c|c|c|}
\hline
&&&\\
\begin{tabular}{c}
{Diagrams}
\end{tabular}
&
\psfrag{2-}{\tiny $2$}
\psfrag{4-}{\tiny $4$}
\parbox[c]{1.5cm}{\epsfig{file=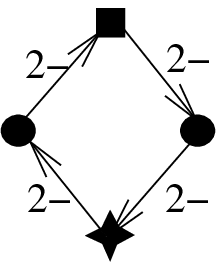,width=0.99\linewidth}}
&
\psfrag{2-}{\tiny $2$}
\psfrag{4-}{\tiny $4$}
\parbox[c]{1.5cm}{\epsfig{file=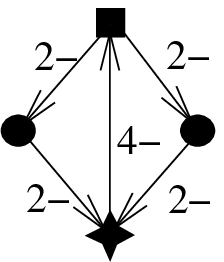,width=0.99\linewidth}}
&
\psfrag{2-}{\tiny $2$}
\psfrag{4-}{\tiny $4$}
\parbox[c]{1.5cm}{\epsfig{file=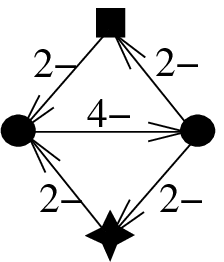,width=0.99\linewidth}}
\\
&&&
\\
\hline
&&&
\\
\begin{tabular}{c}
Unfoldings
\end{tabular}
&
\parbox[c]{1.7cm}{\epsfig{file=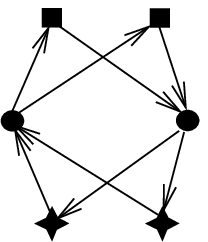,width=0.99\linewidth}}
&
\parbox[c]{2.6cm}{\epsfig{file=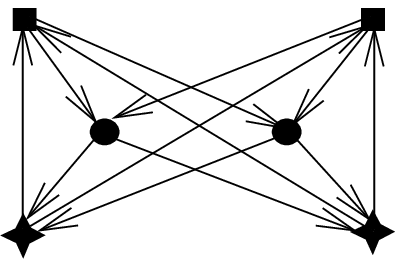,width=0.99\linewidth}}
&
\parbox[c]{1.7cm}{\epsfig{file=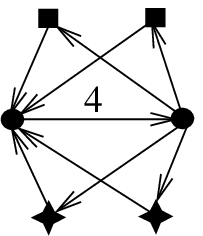,width=0.99\linewidth}}
\\
&&&
\\
\hline

\end{tabular}

\end{table}


\begin{lemma}
\label{sub}
Subdiagram of s-decomposable diagram is s-decomposable.
\end{lemma}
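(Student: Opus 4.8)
The plan is to reduce the statement, by an easy induction, to the removal of a single vertex, and then to a purely local inspection of the blocks. Fix a block decomposition $S_{\mr{dec}}$ of the given s-decomposable diagram $S$. Since any subdiagram is an induced subcomplex, it suffices to prove: for every vertex $v\in S$ the induced subdiagram $S\setminus\{v\}$ is s-decomposable. Granting this, an arbitrary subdiagram $S_1\subset S$ is reached by deleting vertices one at a time, and induction on $|S|-|S_1|$ (using that each intermediate diagram is again s-decomposable) finishes the argument.

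The first step is a sublemma about individual blocks: \emph{for every block $\mathsf{B}$ (old or new) and every vertex $w\in\mathsf{B}$, the diagram $\mathsf{B}\setminus\{w\}$ is a disjoint union of blocks (with a single vertex and the empty diagram allowed as degenerate blocks), and the decomposition can be chosen so that every outlet of the resulting small blocks is an outlet of $\mathsf{B}$, while every outlet of $\mathsf{B}$ distinct from $w$ is an outlet of exactly one of the small blocks.} This is a finite verification: there are the old block types of~\cite{FST} together with the seven new ones in Table~\ref{newblocks}, each with only a handful of vertices up to symmetry. The old blocks are handled exactly as in the skew-symmetric case of~\cite{FST}, so the genuinely new content is the inspection of $\mr{\t{III}a}$, $\mr{\t{III}b}$, $\t{\mr{IV}}$, $\t{\mr V}_1$, $\t{\mr V}_2$, $\t{\mr V}_{12}$ and $\t{\mr{VI}}$, where one must also keep track of the weight-$4$ edge inside $\t{\mr V}_{12}$ and of the fact that $\t{\mr{VI}}$ has no outlets.

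The second step is to glue back. Let $\mathsf{B}^{(1)},\dots,\mathsf{B}^{(p)}$ be the blocks of $S_{\mr{dec}}$ that contain $v$ (so $p=1$ when $v$ is a dead end). Replace each $\mathsf{B}^{(j)}$ by the block decomposition of $\mathsf{B}^{(j)}\setminus\{v\}$ supplied by the sublemma, retain all blocks of $S_{\mr{dec}}$ not containing $v$, and glue the whole collection along the matching of outlets inherited from $S_{\mr{dec}}$; this still makes sense because, by the sublemma, all the outlets involved in that matching (other than $v$) survive and remain outlets. One then checks that the result is exactly $S\setminus\{v\}$: the vertex identifications form a subset of those of $S_{\mr{dec}}$, so no spurious edges are created; an edge of $S\setminus\{v\}$ lying inside a block not containing $v$, or produced by a gluing not involving $v$, is reproduced verbatim; and an edge of $S\setminus\{v\}$ lying inside some $\mathsf{B}^{(j)}$ is an edge of $\mathsf{B}^{(j)}\setminus\{v\}$, hence occurs in its small-block decomposition, with the weight-$4$ edges coming from double gluings reassembled as before. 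Disconnectedness causes no problem, since the outcome is a disjoint union of connected block-glued diagrams, which is precisely what ``s-decomposable'' permits.

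The main obstacle I expect is the bookkeeping in the second step rather than the sublemma: one must check that deleting an \emph{outlet} shared by several blocks never destroys a gluing needed elsewhere, that the conventions for cancelling or doubling simple edges at identified outlets are respected after the deletion, and that the exceptional behaviour of $\t{\mr V}_{12}$ (the weight-$4$ edge) and of $\t{\mr{VI}}$ (no outlets, with its own peculiar mutation class, cf.\ Remark~\ref{block6}) is treated correctly. None of this is deep, but it is where an error could hide.
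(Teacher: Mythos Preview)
Your strategy is exactly the one the paper uses: reduce to removing a single vertex, and then check block by block that $\mathsf{B}\setminus\{w\}$ can be replaced by something s-decomposable in which every \emph{surviving} outlet of $\mathsf{B}$ is still an outlet; the paper simply records the needed replacements in Table~\ref{subst}.

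Two small corrections to your sublemma, however. First, ``disjoint union of blocks'' is too strong: when $\mathsf{B}=\t{\mr V}_{12}$ and $w$ is the outlet, what remains is a single edge of weight $4$, which is not a block but rather two type~$\mr I$ blocks glued along \emph{both} endpoints. So the right formulation (and the one the paper uses) is that $\mathsf{B}\setminus\{w\}$ is \emph{s-decomposable}, not a disjoint union. Second, your clause ``every outlet of the resulting small blocks is an outlet of $\mathsf{B}$'' is false (e.g.\ deleting a dead end of $\t{\mr V}_1$ leaves a $\t{\mr{IV}}$, one of whose outlets was a dead end of $\t{\mr V}_1$) and, as you yourself only use the other direction in Step~2, it is not needed. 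With these two adjustments your gluing argument goes through verbatim: the extra internal gluings and the new ``spurious'' outlets all sit at former dead ends of the $\mathsf{B}^{(j)}$, hence never interact with the matching inherited from $S_{\mr{dec}}$.
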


To prove the lemma, it is sufficient to show a way to substitute any block ${\mathsf{B}}$ with a vertex $x$ removed by some s-decomposable diagram such that all outlets remain outlets. The choice of substitutions is shown in Table~\ref{subst}.

\begin{table}[!h]
\caption{Block decompositions of blocks with one vertex removed.}
\label{subst}
\begin{tabular}{|c|c|c|c|c|c|}
\hline
\vphantom{$\int\limits^-$}Block ${\mathsf{B}}$
&${\t{\mr{IV}}}$
&${\mr{\t{V}_1}}$
&${\mr{\t{V}_2}}$
&${\mr{\t{V}_{12}}}$
&${\mr{\t{VI}}}$
\\
\hline
\begin{tabular}{c}
Decomposition\\
of ${\mathsf{B}}\setminus x$
\end{tabular}
&${\mr{\t{III}}}$ or ${\mr{{I}}}$
&
\begin{tabular}{c}
${\mr{\t{IV}}}$, ${\mr{{III}}}$ or\\
\psfrag{2-}{\tiny $2$}\raisebox{-0.32cm}{\epsfig{file=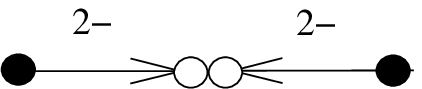,width=0.14\linewidth}}
\end{tabular}
&\begin{tabular}{c}
${\mr{\t{IV}}}$, ${\mr{{III}}}$ or\\
\psfrag{2-}{\tiny $2$}\raisebox{-0.32cm}{\epsfig{file=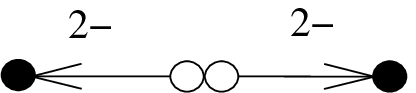,width=0.14\linewidth}}
\end{tabular}
&\begin{tabular}{c}
${\mr{{III}}}$ or\\
\psfrag{2-}{\tiny $2$}
\psfrag{1}{}
\psfrag{2}{}
\psfrag{3}{}
\raisebox{-0.42cm}{\epsfig{file=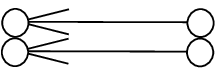,width=0.08\linewidth}}
\end{tabular}
&\begin{tabular}{c}
${\mr{\t{V}_1}}$, ${\mr{\t{V}_2}}$ or\vphantom{$\int\limits^-$}\\
{\epsfig{file=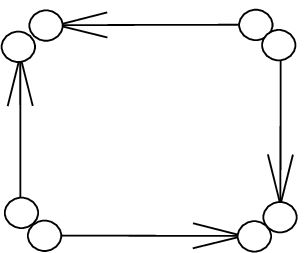,width=0.09\linewidth}}
\end{tabular}
\\
\hline
\end{tabular}
\end{table}

\begin{remark}
Lemma~\ref{sub} can be considered as a corollary of Theorem~\ref{unfolding}. More precisely, Theorem~\ref{unfolding} gives a geometric interpretation of Table~\ref{subst}. It is known that any subdiagram  $S\setminus x$ of block-decomposable diagram $S$ is block-decomposable: to obtain the corresponding triangulation of a bordered surface we need to cut the triangulation for $S$ along the edge corresponding to $x$. It is easy to check that if a block $\h {\mathsf{B}}$ is an unfolding of a block ${\mathsf{B}}$, and $x\in {\mathsf{B}}$, then removing all the vertices of type $\h x$ from $\h {\mathsf{B}}$ we are always left with a union of several blocks, such that initial symmetries of the block $\h {\mathsf{B}}$ are preserved. In other words, unfolding $\h {\mathsf{B}}$ of block ${\mathsf{B}}$ with $\h x$ removed can be ``folded back''.

\end{remark}

\section{Classification of mutation-finite diagrams and matrices}
\label{minimal}

Our proof of Theorem~\ref{all-s} follows the proof of Theorem~6.1 from~\cite{FST1}.

First, we define {\it minimal non-decomposable diagram} as a diagram which is not s-decomposable, but any its subdiagram is s-decomposable. According to Corollary~\ref{sub}, a non-decomposable diagram of order $n$ is minimal if and only if any its subdiagram of order $n-1$ is s-decomposable.

Then we prove the following generalization of~\cite[Theorem 5.2]{FST1}.

\begin{theorem}
\label{g8}
Any minimal non-decomposable diagram contains at most $7$ vertices.

\end{theorem}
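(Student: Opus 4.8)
The plan is to mimic the strategy of~\cite[Theorem~5.2]{FST1}: show that a minimal non-decomposable diagram cannot be too large by exploiting the fact that \emph{all} its proper subdiagrams are s-decomposable, hence (by Corollary~\ref{sub} and Corollary~\ref{mut-fin}) mutation-finite, and in particular, by Theorem~\ref{less3}, contain no edges of weight greater than $4$. Since a minimal non-decomposable diagram $S$ is not skew-symmetric (a skew-symmetric minimal non-decomposable diagram would fall under~\cite{FST1}), $S$ must contain an edge of weight $2$ or a ``half-integer'' feature forcing a genuinely skew-symmetrizable block; the new blocks $\mr{\t{III}a},\dots,\t{\mr{VI}}$ are the only non-skew-symmetric building pieces available, so the argument naturally splits according to which non-skew-symmetric local configurations appear in $S$.

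The key steps, in order, are the following. First, I would set $n=|S|$ and assume $n\ge 8$ for contradiction; pick a vertex $v$ whose removal leaves an s-decomposable diagram $S\setminus v$ with a fixed decomposition $(S\setminus v)_{\mr{dec}}$. Second, I would analyze how $v$ attaches to $(S\setminus v)_{\mr{dec}}$: since $S$ is mutation-finite-free of high weights on subdiagrams, the edges from $v$ to each block are tightly constrained (weights in $\{1,2,3,4\}$, and triangle-condition from Figure~\ref{quivermut} applies to every chordless triangle through $v$). Third — the heart of the argument — I would show that for $n\ge 8$ there are ``enough'' blocks in $(S\setminus v)_{\mr{dec}}$ that one can find a proper subdiagram $S'\subsetneq S$ containing $v$ which is itself non-decomposable, contradicting minimality; this is done by a pigeonhole/case analysis on block types, using that blocks have bounded size (at most $4$ vertices for the new blocks, at most $5$ for type $\mr V$) and bounded number of outlets. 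Concretely, one argues that the ``local neighbourhood'' of $v$ together with at most a few blocks of $(S\setminus v)_{\mr{dec}}$ already carries all the obstruction to s-decomposability, and the remaining blocks — which exist once $n$ is large enough — can be deleted while keeping $S$ non-decomposable. Fourth, I would separately verify by finite inspection that every non-decomposable diagram on $\le 7$ vertices that is minimal indeed occurs (or at least that $7$ is attained), and that no minimal example on exactly $8$ vertices survives the above reduction; this reduces to checking a manageable finite list, which can be organized by the diagrams of Figure~\ref{allfign} and their completions.

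I expect the main obstacle to be the case analysis in the third step: controlling all ways a new vertex $v$ can glue to an s-decomposable diagram is genuinely combinatorial, and unlike the skew-symmetric case of~\cite{FST1} one must track orientations, the weight-$2$ edges, and the interplay between old blocks and the seven new block types (including the outlet-free block $\t{\mr{VI}}$, which behaves anomalously, cf.\ Remark~\ref{block6}). A clean way to tame this is to first prove a ``locality'' lemma: if $S$ is non-decomposable but $S\setminus v$ is s-decomposable with decomposition $(S\setminus v)_{\mr{dec}}$, then the obstruction lives inside $\langle v, N_{y_1}((S\setminus v)_{\mr{dec}}),\dots, N_{y_k}((S\setminus v)_{\mr{dec}})\rangle$ where $y_1,\dots,y_k$ are the neighbours of $v$, and $k$ together with the sizes $|N_{y_i}|$ is bounded independently of $n$; then any block of $(S\setminus v)_{\mr{dec}}$ disjoint from this union can be removed, forcing $n$ small. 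The bookkeeping for this lemma, together with the base-case enumeration, is where the real work lies, but it is finite and parallels~\cite{FST1} closely enough that the bound $7$ should drop out.
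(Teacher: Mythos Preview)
Your outline does not match the paper's argument, and the central step --- your ``locality lemma'' --- is a genuine gap rather than bookkeeping. You propose to contradict minimality by exhibiting a \emph{proper} non-decomposable subdiagram $S'\subsetneq S$, obtained by deleting blocks of $(S\setminus v)_{\mathrm{dec}}$ far from $v$. But minimality guarantees that every such $S'$ \emph{is} s-decomposable; what you would need is that any s-decomposition of $S'$ can be extended over the deleted blocks to an s-decomposition of $S$, and nothing in the setup gives this. A decomposition of the local piece $\langle v, N_{y_1},\dots\rangle$ need not use the same blocks, nor leave the same outlets, as the fixed decomposition of $S\setminus v$ does on that region, so the two do not glue. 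In short, ``removing a distant block preserves non-decomposability'' is exactly the hard assertion, not a pigeonhole consequence; and it is not how \cite{FST1} proceeds either, so the claim that your argument ``parallels \cite{FST1} closely'' is misleading.

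The paper's actual route (Appendix~A) is dual to yours: assuming $|S|\ge 8$, it shows directly that $S$ is s-decomposable. The engine is a pair of cut-vertex/cut-pair propositions (extensions of \cite[Prop.~4.6,~4.8]{FST1} to the new block list), used to run a block-type elimination: for every $x$, no decomposition of $S\setminus x$ can contain a block of type $\mathrm V,\tilde{\mathrm V}_1,\tilde{\mathrm V}_2,\tilde{\mathrm V}_{12}$; then none of type $\mathrm{IV}$; then (after a dedicated analysis) none of type $\tilde{\mathrm{IV}}$; and finally $S$ has no edge of weight~$2$. This last lemma reduces the problem to the skew-symmetric case already handled in \cite{FST1}. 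Two side remarks: the theorem does not assume $S$ mutation-finite (the paper states this explicitly), and it includes skew-symmetric $S$ --- so your opening move ``$S$ is not skew-symmetric'' is not available as a hypothesis; rather, the absence of weight-$2$ edges is the \emph{endpoint} of the elimination chain.
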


The proof follows the proof of~\cite[Theorem 5.2]{FST1}. The only difference is now we need to consider more types of blocks. All essential tools remain the same. The complete list of refinements is contained in the Appendix~A.

The further program is the same as in skew-symmetric case (see~\cite{FST1}).
\begin{theorem}
\label{min}
The only minimal non-decomposable mutation-finite diagrams with at least three vertices are ones mutation-equivalent to one of the four diagrams $E_6$, $X_6$, $\t{G}_2$ and $F_4$ shown on Figure~\ref{minfig}.

\begin{figure}[!h]
\begin{center}
\psfrag{E}{${E_6}$}
\psfrag{X}{$X_6$}
\psfrag{G}{$\t{G}_2$}
\psfrag{F}{$F_4$}
\psfrag{4}{\tiny $4$}
\psfrag{3}{\tiny $3$}
\psfrag{2}{\tiny $2$}
\epsfig{file=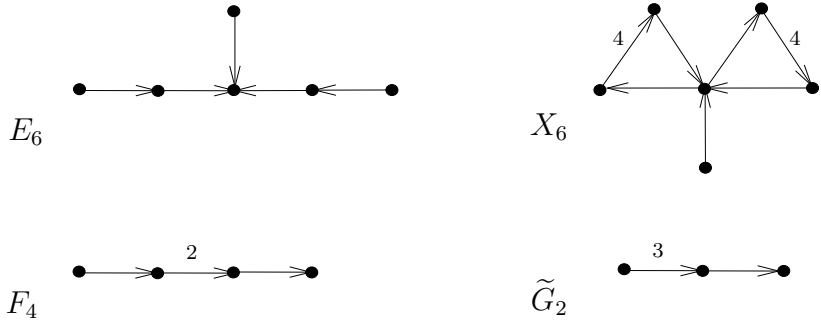,width=0.7\linewidth}\qquad\qquad
\caption{Minimal non-decomposable mutation-finite diagrams of order at least three}
\label{minfig}
\end{center}
\end{figure}

\end{theorem}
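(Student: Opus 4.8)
The plan is to combine the finiteness bound from Theorem~\ref{g8} with a bounded computer-assisted search, structured so that the number of cases to examine is genuinely small. By Theorem~\ref{g8}, a minimal non-decomposable diagram has at most $7$ vertices, so the only candidates to be minimal non-decomposable \emph{and} mutation-finite live in orders $3,4,5,6,7$. First I would dispose of the purely small orders: a connected diagram of order $3$ is non-decomposable precisely when it is not block-decomposable as a skew-symmetric diagram and not s-decomposable as a skew-symmetrizable one, and by Theorem~\ref{less3} mutation-finiteness of an order-$3$ diagram is equivalent to all weights in its mutation class being $\le 4$. A short case analysis over the finitely many weight/orientation patterns on a triangle with weights in $\{1,2,3,4\}$ (using the relation $\pm\sqrt c\pm\sqrt d=\sqrt{ab}$ from Figure~\ref{quivermut}) shows the only minimal non-decomposable mutation-finite diagrams of order $3$ are those mutation-equivalent to $\t G_2$ and $F_4$; everything else either is s-decomposable or blows up.

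For orders $4,5,6,7$ the strategy mirrors~\cite[Theorem~6.1]{FST1}. The key structural input is Lemma~\ref{sub}: every proper subdiagram of an s-decomposable diagram is s-decomposable, hence a \emph{minimal} non-decomposable diagram $S$ of order $n\ge 4$ has the property that every order-$(n-1)$ subdiagram is s-decomposable, and moreover (since $S$ is itself not s-decomposable) $S$ cannot be assembled from blocks. I would enumerate, order by order, the connected diagrams all of whose codimension-one subdiagrams are s-decomposable, discard those that are themselves s-decomposable, and then apply the mutation-finiteness test of Theorem~\ref{less3} (run the mutation class, check no weight exceeds $4$) to the survivors. The bookkeeping is finite because s-decomposable diagrams of a fixed small order are explicitly classifiable from the block list (Figure~\ref{bloki} plus Table~\ref{newblocks}), and the weight constraint $\le 4$ together with the chordless-cycle square condition ($\text{product of weights around a chordless cycle is a perfect square}$) drastically limits the glueings. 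This produces exactly $E_6$ and $X_6$ at order $6$ (both already skew-symmetric, and already known from~\cite{FST1} to be the order-$6$ minimal non-decomposable mutation-finite quivers), nothing new at orders $4,5$, and — crucially — nothing at order $7$: every order-$7$ diagram with all order-$6$ subdiagrams s-decomposable is either s-decomposable or mutation-infinite.

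The main obstacle is the order-$7$ elimination together with the order-$4,5$ sweeps: one must be sure the enumeration of diagrams-with-all-proper-subdiagrams-s-decomposable is complete, which requires a careful catalogue of s-decomposable diagrams in orders $3$ through $6$ (the codimension-one faces) and a systematic check of the ways they can glue to a connected diagram satisfying the skew-symmetrizability cycle condition. This is exactly the place where the new blocks of Table~\ref{newblocks} force extra cases beyond~\cite{FST1}; as noted after Theorem~\ref{g8}, the refinements needed are collected in Appendix~A, and I would lean on them here. Once the enumeration is pinned down, the mutation-finiteness verification for each finite list of survivors is a mechanical finite computation via Theorem~\ref{less3}, and the skew-symmetric survivors $E_6,X_6$ are already identified in~\cite{FST1}, so no new work is needed to confirm their mutation classes are finite. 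Assembling the order-by-order conclusions yields precisely the four diagrams $E_6$, $X_6$, $\t G_2$, $F_4$ of Figure~\ref{minfig}.
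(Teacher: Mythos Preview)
Your overall strategy matches the paper's: use Theorem~\ref{g8} to bound the order by $7$, then carry out a finite computer-assisted search over diagrams with edge weights at most $4$. There is, however, a factual slip in your accounting: $F_4$ has four vertices, not three, so your order-$3$ case analysis yields only $\t G_2$, and your claim ``nothing new at orders $4,5$'' is wrong --- $F_4$ is precisely the order-$4$ survivor.

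The paper's organization is a little different from yours. Rather than going order by order and enumerating diagrams whose codimension-one subdiagrams are all s-decomposable, the paper simply lists (by computer) \emph{all} mutation-finite diagrams on at most $7$ vertices with weights $\le 4$, checks each for s-decomposability, and finds (besides the skew-symmetric cases handled in~\cite{FST1}) seven non-skew-symmetric non-decomposable mutation classes. It then checks directly that every diagram in each of these seven classes contains either an edge of weight $3$ (hence, by Remark~\ref{g2}, a subdiagram mutation-equivalent to $\t G_2$) or a subdiagram mutation-equivalent to $F_4$; minimality of $\t G_2$ and $F_4$ is then immediate. Your inductive plan is also legitimate, but note that Appendix~A is devoted to proving Theorem~\ref{g8} itself, not to cataloguing s-decomposable diagrams of small order; the enumeration step you need is exactly the computer search the paper runs, and there is no shortcut around it.
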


\begin{remark}
\label{g2}
Amongst diagrams of order two, there is exactly one non-decomposable diagram (called $G_2$) admitting an unfolding to a block-decomposable diagram (this diagram and corresponding unfolding $D_4$ are shown on Figure~\ref{g2u}). Moreover, $G_2$ is a unique non-decomposable diagram of order $2$ that can be a subdiagram of a mutation-finite diagram. Due to this fact, we may think $G_2$ to be minimal non-decomposable instead of $\t{G}_2$ (every mutation of which contains $G_2$).

\end{remark}

\begin{figure}[!ht]
\begin{center}
\psfrag{3}{\scriptsize $3$}
\psfrag{v2}{\tiny $v_2$}
\psfrag{w}{\tiny $w$}
\psfrag{w2}{\tiny $w_2$}
\psfrag{x}{\tiny $x$}
\psfrag{or}{\small or}
$G_2$\;\raisebox{0.45cm}{\epsfig{file=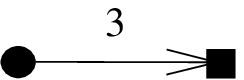,width=0.1\linewidth}}\qquad\qquad $D_4$\;\epsfig{file=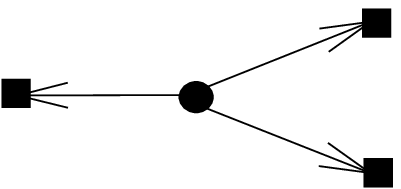,width=0.16\linewidth}
\caption{$G_2$ is a unique non-decomposable diagram of order two admitting an unfolding to a block-decomposable diagram (which is $D_4$).}
\label{g2u}
\end{center}
\end{figure}

\begin{proof}[Proof of Theorem~\ref{min}]
It is easy to see that the four diagrams shown on Figure~\ref{minfig} are mutation-finite and non-decomposable ($E_6$ and $X_6$ are discussed in~\cite{FST1}). To prove the theorem, it is sufficient to show that all other mutation-finite diagrams on at most $7$ vertices either are s-decomposable, or contain subdiagrams which are mutation-equivalent to one of $\t{G}_2$, $F_4$, $E_6$ or $X_6$. Due to Remark~\ref{g2}, instead of looking for subdiagrams mutation-equivalent to $\t{G}_2$ it is enough to find an edge of weight $3$.

Let $S$ be a minimal non-decomposable mutation-finite diagram. By Theorem~\ref{g8}, $|S|\le 7$. Since the mutation class of $S$ is finite, weights of edges of $S$ do not exceed $4$. The number of diagrams on at most $7$ vertices with bounded multiplicities of edges is finite. We use a computer~\cite{progr} to list all diagrams, choose mutation-finite ones, and check which of them are s-decomposable. The check is organized as in the proof of Theorem~5.11 from~\cite{FST1}.

As a result, besides skew-symmetric diagrams, we get $7$ mutation classes of non-decomposable mutation-finite diagrams of order at least two: $1$ of order three, $3$ of order four, $1$ of order five, and $2$ of order six. All these diagrams are shown on Figure~\ref{allfign}. Furthermore, a short straightforward check (using {\rm {\tt Java}} applet~\cite{K}) shows that any diagram which is mutation-equivalent to any of these $7$ ones contains either an edge of weight $3$ (and a subdiagram mutation-equivalent to $\t{G}_2$) or a subdiagram mutation-equivalent to $F_4$. The minimality is evident.

\end{proof}

\begin{cor}
\label{contmin}
Every non-decomposable mutation-finite diagram contains an edge of weight $3$ or subdiagram mutation-equivalent to one of $F_4$, $E_6$ and $X_6$.

\end{cor}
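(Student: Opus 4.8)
The goal is to prove Corollary~\ref{contmin}: every non-decomposable mutation-finite diagram contains an edge of weight $3$ or a subdiagram mutation-equivalent to one of $F_4$, $E_6$, $X_6$.

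The plan is to deduce this from Theorem~\ref{min} by an induction on the order $n$ of the diagram $S$, using minimality and Corollary~\ref{sub}. First I would dispose of small cases: if $|S|\le 2$, then by Remark~\ref{g2} the only non-decomposable mutation-finite diagram of order $\le 2$ that can occur as a subdiagram of a mutation-finite diagram is $G_2$, which has an edge of weight $3$; and in any case a non-decomposable diagram of order $2$ which is mutation-finite is exactly $G_2$ (a single edge of weight $\ge 2$, and weight $\ge 5$ is excluded by Theorem~\ref{less3}, weight $4$ gives a block-decomposable diagram), so it has an edge of weight $3$. This anchors the induction.

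For the inductive step, suppose $|S| = n \ge 3$ and $S$ is non-decomposable and mutation-finite. If $S$ is itself minimal non-decomposable, then Theorem~\ref{min} applies directly: $S$ is mutation-equivalent to one of $\t G_2$, $F_4$, $E_6$, $X_6$; in the first case $S$ (being mutation-equivalent to $\t G_2$, hence containing $G_2$ after a mutation — or one simply notes that we are allowed to pass to the mutation class, and $\t G_2$ itself together with all its mutations contains an edge of weight $3$ by Remark~\ref{g2}) contains an edge of weight $3$, and in the remaining three cases $S$ is itself mutation-equivalent to $F_4$, $E_6$, or $X_6$. Here I should be slightly careful about the precise phrasing: "contains an edge of weight $3$" should be read up to mutation, matching how the phrase is used in the proof of Theorem~\ref{min}; alternatively, since $\t G_2$ has an edge of weight $3$ and we are free to take $S$ itself when $S\sim\t G_2$, the statement is literally true. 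If $S$ is not minimal non-decomposable, then by the characterization recalled at the start of Section~\ref{minimal} (a consequence of Corollary~\ref{sub}) there is a subdiagram $S' \subsetneq S$ that is non-decomposable; choosing $S'$ of minimal order among non-decomposable subdiagrams, $S'$ is minimal non-decomposable, and $S'$ is mutation-finite because any subdiagram of a mutation-finite diagram is mutation-finite (its mutation class injects into restrictions of the mutation class of $S$ — equivalently, a diagram with an infinite mutation class cannot sit inside one with a finite mutation class, which follows from Theorem~\ref{less3} since an unbounded weight would persist). Then apply Theorem~\ref{min} to $S'$: either $|S'|\le 2$ and $S' = G_2$ has an edge of weight $3$, or $S'$ is mutation-equivalent to $\t G_2$ (again yielding an edge of weight $3$, via Remark~\ref{g2}, after a suitable mutation of $S$) or to one of $F_4$, $E_6$, $X_6$, giving the desired subdiagram of $S$.

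The main obstacle — really the only delicate point — is the bookkeeping about "an edge of weight $3$" versus "a subdiagram mutation-equivalent to $\t G_2$", and making sure the property is invoked consistently with whether we are allowed to mutate. This is resolved exactly as in the proof of Theorem~\ref{min}: by Remark~\ref{g2}, having an edge of weight $3$ and containing a subdiagram mutation-equivalent to $G_2$ (hence, after a further mutation expanding $G_2$ to $\t G_2$ if needed, to $\t G_2$) are interchangeable for our purposes, so no genuine difficulty arises. A secondary small point is justifying that subdiagrams of mutation-finite diagrams are mutation-finite; I would either cite this as folklore following from Theorem~\ref{less3} (bounded weights are inherited, and conversely an unbounded weight in a subdiagram forces an unbounded weight in the ambient diagram) or simply note it was already used implicitly throughout Section~\ref{blockdecomp}.
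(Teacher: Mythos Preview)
Your argument is essentially correct and matches the paper's implicit reasoning: take a minimal non-decomposable subdiagram $S'$ of $S$ (which exists by Corollary~\ref{sub} and is mutation-finite as a subdiagram of a mutation-finite diagram), then apply Theorem~\ref{min} and Remark~\ref{g2}. The induction framing is unnecessary---one pass to a minimal subdiagram suffices---but it does no harm.

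The one point where your exposition goes astray is the hedging about whether ``contains an edge of weight~$3$'' is meant literally or up to mutation. It is meant literally, and your suggested reading ``up to mutation'' is wrong, as is the fallback ``we are free to take $S$ itself when $S\sim\t G_2$'' (you cannot choose $S$). The correct justification is the one you also mention: by Remark~\ref{g2}, \emph{every} diagram in the mutation class of $\t G_2$ contains $G_2$, hence an edge of weight~$3$. So if $S'$ is mutation-equivalent to $\t G_2$, then $S'$ itself---and hence $S$---literally has such an edge. Drop the hedging and the alternatives; keep only that sentence.
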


\begin{remark}
\label{no7}
As it follows from computations made in the proof of Theorem~\ref{g8}, any non-decomposable mutation-finite diagram of order $7$ is skew-symmetric. In other words, for any non-decomposable diagram $S$ of order $7$ containing an edge of weight $2$ or $3$, and any diagram $S'$ containing $S$ as a subdiagram, $S'$ is mutation-infinite. We will use this to show that there are no other non-decomposable diagrams except ones listed above.

The same computations show that any mutation-finite diagram containing an edge of weight $3$ is of order at most $4$. Clearly, all such diagrams are non-decomposable (since no block contains an edge of weight $3$).

\end{remark}

\begin{theorem}
\label{all}
A connected non-decomposable mutation-finite diagram of order greater than $2$ is mutation-equivalent
to one of the eleven diagrams $E_6$, $E_7$, $E_8$, $\widetilde E_6$, $\widetilde E_7$,
$\widetilde E_8$, $X_6$, $X_7$, $E_6^{(1,1)}$, $E_7^{(1,1)}$, $E_8^{(1,1)}$ shown on Figure~\ref{allfig}, or to one of the seven diagrams $\t G_2$, $F_4$, $\t F_4$, $G_2^{(*,+)}$, $G_2^{(*,*)}$, $F_4^{(*,+)}$, $F_4^{(*,*)}$ shown on Figure~\ref{allfign}.

\begin{figure}[!h]
\begin{center}
\psfrag{4}{\scriptsize $4$}
\psfrag{1}{$E_6$}
\psfrag{2}{$E_7$}
\psfrag{3}{$E_8$}
\psfrag{1_}{$\widetilde E_6$}
\psfrag{2_}{$\widetilde E_7$}
\psfrag{3_}{$\widetilde E_8$}
\psfrag{1__}{$E_6^{(1,1)}$}
\psfrag{2__}{$E_7^{(1,1)}$}
\psfrag{3__}{$E_8^{(1,1)}$}
\psfrag{4-}{$X_6$}
\psfrag{5-}{$X_7$}
\epsfig{file=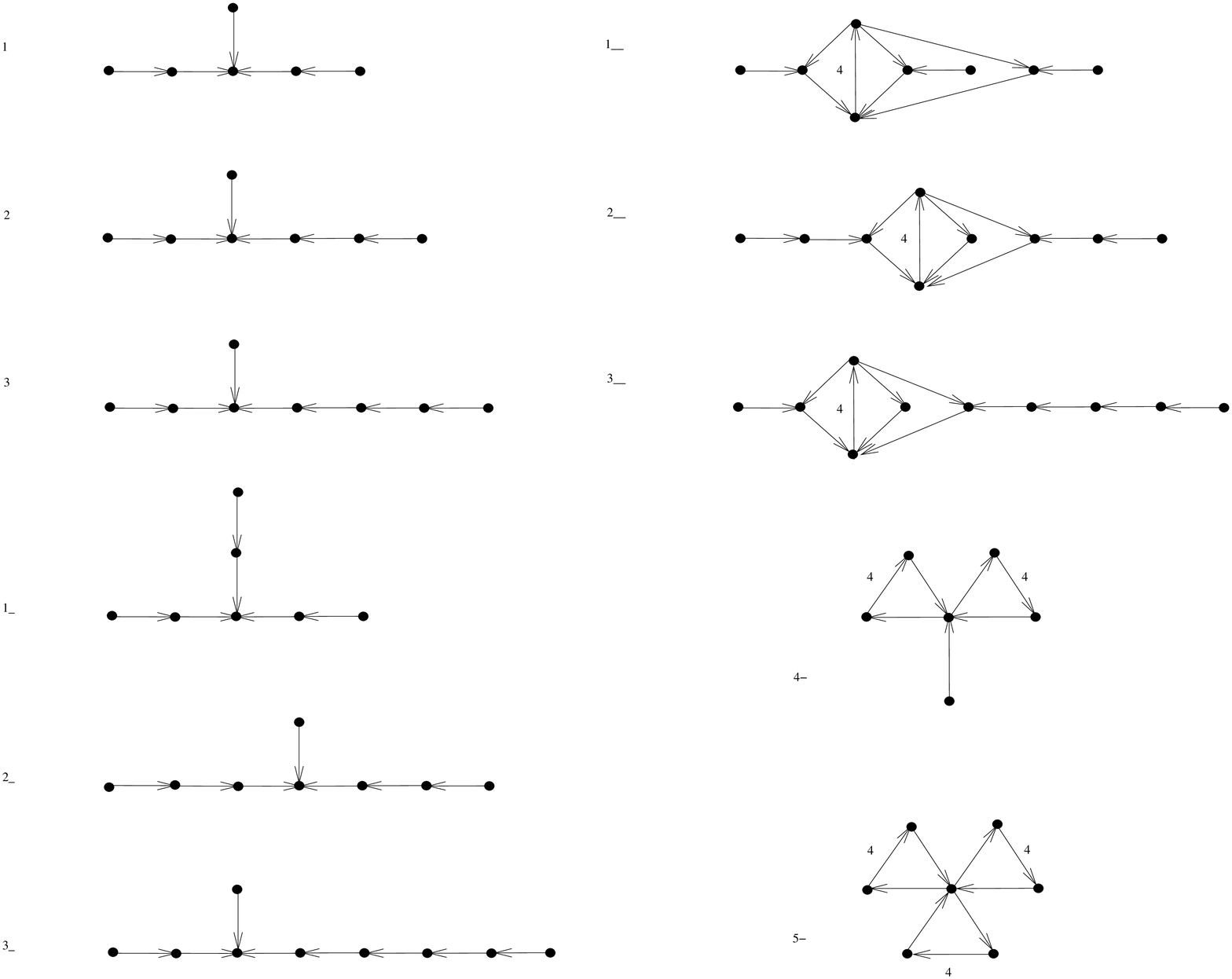,width=1.0\linewidth}
\caption{Non-decomposable mutation-finite skew-symmetric diagrams of order at least $3$}
\label{allfig}
\end{center}
\end{figure}

\end{theorem}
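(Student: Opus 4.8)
The plan is to combine the two classification results already stated in the excerpt with the structural constraints coming from Theorems~\ref{g8} and~\ref{min}. Concretely, Theorem~\ref{all-s} already asserts that a non-skew-symmetric mutation-finite diagram of order $\ge 3$ is either s-decomposable or mutation-equivalent to one of the seven non-decomposable diagrams on Figure~\ref{allfign}. Since any s-decomposable diagram corresponding to a non-skew-symmetric matrix still carries multiple edges, the skew-symmetric and non-skew-symmetric cases split cleanly, and the present theorem is exactly the amalgamation of~\cite[Theorem~6.1]{FST1} (the skew-symmetric list: the eleven diagrams on Figure~\ref{allfig}) with the non-decomposable part of Theorem~\ref{all-s}. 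So the real content to be proved here is: \emph{every} non-decomposable mutation-finite diagram of order $>2$, whether or not it is skew-symmetric, already appears in one of these two finite lists.

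First I would reduce to the minimal case. By Corollary~\ref{sub}, any subdiagram of an s-decomposable diagram is s-decomposable, so a non-decomposable diagram $S$ contains a minimal non-decomposable subdiagram $S_0$. If $S$ is non-skew-symmetric, then by Theorem~\ref{min} (together with Remark~\ref{g2}) $S_0$ is mutation-equivalent to one of $G_2$ (equivalently $\t G_2$), $F_4$, $E_6$, or $X_6$; if moreover $S_0\subset S$ carries an edge of weight $3$, Remark~\ref{no7} tells us $|S|\le 4$, which pins down the $\t G_2$ and $G_2^{(*,+)}$, $G_2^{(*,*)}$ cases by a direct finite search. The remaining situation is that $S$ contains a subdiagram mutation-equivalent to $F_4$ (and only edges of weights $1,2,4$). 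Here I would run the same bounded enumeration used in the proof of Theorem~\ref{min}: by Theorem~\ref{g8} every minimal piece has at most $7$ vertices, and by Remark~\ref{no7} a non-decomposable diagram of order $7$ containing an edge of weight $2$ is mutation-infinite, so any non-skew-symmetric non-decomposable mutation-finite diagram has order at most $6$; the finitely many such diagrams with edge weights bounded by $4$ are then checked by computer~\cite{progr}, and the surviving mutation classes are precisely $F_4$, $\t F_4$, $F_4^{(*,+)}$, $F_4^{(*,*)}$. Combined with the skew-symmetric classification of~\cite{FST1}, this yields the full list of eighteen diagrams.

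The main obstacle — and the step that absorbs the genuine work — is the growth-control argument behind the order bound, i.e.\ showing that no non-skew-symmetric non-decomposable mutation-finite diagram has order $\ge 7$. This rests on Theorem~\ref{g8} (minimal non-decomposable diagrams have $\le 7$ vertices) and on the finer analysis in Remark~\ref{no7}, both of which require a careful case analysis over all block types, including the new blocks $\t{\mr{III}}$a, $\t{\mr{III}}$b, $\t{\mr{IV}}$, $\t{\mr V}_1$, $\t{\mr V}_2$, $\t{\mr V}_{12}$, $\t{\mr{VI}}$; the refinements for the additional blocks are what is deferred to Appendix~A. Once the order bound is in place, the rest is a finite, computer-assisted verification: enumerate all connected diagrams on $\le 7$ vertices with edge weights $\le 4$, discard mutation-infinite ones via Theorem~\ref{less3}, discard s-decomposable ones via the block-matching check from the proof of Theorem~5.11 of~\cite{FST1}, and identify the mutation classes of the survivors using the \texttt{Java} applet~\cite{K}. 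I would close by remarking that this simultaneously re-derives the skew-symmetric list of~\cite{FST1}, so Theorem~\ref{all} is indeed the complete classification.
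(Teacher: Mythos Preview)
Your proposal has two problems, one structural and one substantive.

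The structural one: you open by invoking Theorem~\ref{all-s} to deduce Theorem~\ref{all}, but in the paper Theorem~\ref{all-s} is stated \emph{after} Theorem~\ref{all} and is obtained as an immediate corollary of it via Lemma~\ref{iff}. So that first paragraph is circular. You seem aware of this and go on to sketch an independent argument, so let me turn to that.

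The substantive gap is in your order bound. You correctly dispose of the cases where the minimal non-decomposable subdiagram $S_0\subset S$ is mutation-equivalent to $\t G_2$ or $F_4$: then $S_0$ itself carries a weight-$2$ or weight-$3$ edge, any connected order-$7$ subdiagram of $S$ containing $S_0$ is non-decomposable and non-skew-symmetric, and Remark~\ref{no7} forces it to be mutation-infinite. But you then write ``the remaining situation is that $S$ contains a subdiagram mutation-equivalent to $F_4$'', silently dropping the case where $S$ is non-skew-symmetric yet \emph{every} minimal non-decomposable subdiagram is mutation-equivalent to $E_6$ or $X_6$. Nothing you cite excludes this a priori: the weight-$2$ edge making $S$ non-skew-symmetric need not lie inside any minimal non-decomposable piece. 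In that situation Remark~\ref{no7} tells you only that each connected order-$7$ subdiagram containing $S_0$ is skew-symmetric (hence one of $E_7$, $X_7$, $\t E_6$), not that $|S|\le 6$; your claimed bound ``order at most $6$'' simply does not follow.

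The paper closes precisely this gap with an inductive climb that you have omitted. Using Lemma~\ref{submt} one may assume the order-$7$ piece $S'$ literally equals $E_7$, $X_7$, or $\t E_6$. For $|S|=8$ one argues that the extra vertex $x$ cannot be joined to $S_0$ (else $\langle S_0,x\rangle$ is order $7$, non-decomposable, non-skew-symmetric), so the offending edge runs from $x$ to $S'\setminus S_0$; then one uses that every vertex of $X_7$ and $\t E_6$ already lies in some $X_6$ or $E_6$, and checks directly that attaching a weight-$2$ or weight-$4$ edge at the one exceptional vertex of $E_7$ yields a mutation-infinite diagram. The same analysis is repeated for orders $9$, $10$, $11$ (with $E_6^{(1,1)}$, $\t E_7$, $E_8$, then $E_7^{(1,1)}$, $\t E_8$, then $E_8^{(1,1)}$), after which Corollary~\ref{p1} finishes. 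Your bounded enumeration on $\le 7$ vertices does not replace this step.
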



As we have already shown (see the proof of Theorem~\ref{min}), all these diagrams have finite mutation class and are non-decomposable (for skew-symmetric ones see~\cite{FST1}). We need to prove completeness of the list.

The following two lemmas are evident.

\begin{lemma}[\cite{FST1}, Lemma~6.4]
\label{submt}
Let  $S_1$ be a proper subdiagram of $S$, let $S_0$ be a diagram mutation-equivalent to $S_1$. Then there exists a diagram $S'$ which is mutation-equivalent to $S$ and contains $S_0$.

\end{lemma}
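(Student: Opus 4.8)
The plan is to exploit the \emph{locality} of mutation. Let $V_1$ be the vertex set of $S_1$, which I take to be the full subdiagram of $S$ spanned by $V_1$ (this is the meaningful reading: for a non-full subdiagram the statement would fail, since the internal mutations of $S$ see the extra edges). The key observation I would establish first is that restriction to $V_1$ commutes with any mutation carried out at a vertex $k\in V_1$. Indeed, for $i,j\in V_1$ formula~(\ref{eq:MatrixMutation}) shows that the mutated entry $b'_{ij}$ depends only on $b_{ij}$, $b_{ik}$ and $b_{kj}$, all of which are internal to $V_1$ when $k\in V_1$; equivalently, the mutated edge between $i$ and $j$ is governed solely by the triangle on $i,j,k$, exactly as in Figure~\ref{quivermut}. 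Writing $S|_{V_1}$ for the full subdiagram on $V_1$, this gives $\mu_k(S)|_{V_1}=\mu_k(S|_{V_1})=\mu_k(S_1)$ for every $k\in V_1$.

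With this in hand the proof becomes a transport argument. Since $S_0$ is mutation-equivalent to $S_1$, and mutation leaves the vertex set unchanged (so $S_0$ also has vertex set $V_1$), I can fix a sequence $\mu_{k_t}\cdots\mu_{k_1}$ with all $k_r\in V_1$ carrying $S_1$ to $S_0$. I would then apply the very same sequence to $S$, viewing each $k_r$ as a vertex of the larger diagram, and set $S'=\mu_{k_t}\cdots\mu_{k_1}(S)$. By construction $S'$ is mutation-equivalent to $S$, so it only remains to verify that $S'$ contains $S_0$, i.e. that $S'|_{V_1}=S_0$.

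This last point I would prove by induction on the length $t$, writing $S^{(r)}=\mu_{k_r}\cdots\mu_{k_1}(S)$ and $S_1^{(r)}=\mu_{k_r}\cdots\mu_{k_1}(S_1)$, so that $S_1^{(t)}=S_0$. The base case is $S^{(0)}|_{V_1}=S_1=S_1^{(0)}$. For the step, assuming $S^{(r-1)}|_{V_1}=S_1^{(r-1)}$ and using $k_r\in V_1$ together with the commutation identity, I get $S^{(r)}|_{V_1}=\mu_{k_r}(S^{(r-1)})|_{V_1}=\mu_{k_r}(S^{(r-1)}|_{V_1})=\mu_{k_r}(S_1^{(r-1)})=S_1^{(r)}$. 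Setting $r=t$ yields $S'|_{V_1}=S_0$, so $S_0$ is a subdiagram of $S'$, as required.

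The argument is essentially formal, so there is no serious obstacle; the one step deserving attention is the commutation identity, whose only subtlety is that a diagram does not in general determine its exchange matrix uniquely. I would therefore verify the identity directly at the level of diagrams, where it is precisely the statement that the rule of Figure~\ref{quivermut} for the edge $ij$ involves only the weights and orientations of the three edges of the triangle $i,j,k$; this makes the commutation independent of any choice of underlying matrix. Note that the properness of $S_1$ plays no role in the argument.
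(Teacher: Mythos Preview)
Your argument is correct and is exactly the natural one. The paper itself does not prove this lemma: it states that Lemmas~\ref{submt} and~\ref{m1} are ``evident'' and cites \cite{FST1}, so your write-up is precisely the routine verification the authors have in mind. Your remark that ``subdiagram'' must be read as the \emph{full} (induced) subdiagram on a vertex set is the right clarification, and your handling of the potential ambiguity between matrices and diagrams (by checking the commutation directly via the rule in Figure~\ref{quivermut}) is careful but not strictly necessary, since one can equally well fix a skew-symmetrizable matrix for $S$ and work with its principal submatrix throughout.
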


\begin{lemma}[\cite{FST1}, Lemma~6.2]
\label{m1}
Let $S$ be a non-decomposable diagram of order $d\ge 7$ with finite mutation class. Then $S$ contains a non-decomposable mutation-finite subdiagram $S_1$ of order $d-1$.

\end{lemma}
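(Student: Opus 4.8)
The plan is to split the statement into its two requirements — that $S$ possess an order-$(d-1)$ subdiagram which is (i) mutation-finite and (ii) non-decomposable — and to observe that (i) is essentially automatic, so that all the content sits in (ii). I would attack (ii) by contradiction, using the classification of minimal non-decomposable diagrams already obtained in Theorem~\ref{min}.

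First, for mutation-finiteness I would record the standard locality fact: every full subdiagram $T$ of a mutation-finite diagram is again mutation-finite. This holds because the diagram mutation $\mu_k$ at a vertex $k$ of $T$ is local in the sense of Figure~\ref{quivermut} — it alters only the edges incident to $k$ and the triangles through $k$ — so mutating $S$ at a vertex of $T$ and then restricting to the vertex set of $T$ agrees with mutating $T$ directly. Hence the mutation class of $T$ is the image under restriction of a subset of the (finite) mutation class of $S$, and is therefore finite. In particular, whichever order-$(d-1)$ subdiagram I produce will automatically be mutation-finite, and it remains only to arrange that it be non-decomposable.

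For non-decomposability, suppose toward a contradiction that \emph{every} order-$(d-1)$ subdiagram $S\setminus x$ (one for each vertex $x$) is s-decomposable. I would first upgrade this to: every proper subdiagram of $S$ is s-decomposable. Indeed, any proper subdiagram $T$ with $|T|\le d-2$ is contained in some $S\setminus x$ with $x\notin T$, and a subdiagram of an s-decomposable diagram is s-decomposable by Corollary~\ref{sub}. Together with the hypothesis that $S$ itself is non-decomposable, this says precisely that $S$ is a \emph{minimal} non-decomposable diagram (exactly the reformulation recorded after Corollary~\ref{sub}). Since $S$ is mutation-finite of order $d\ge 7\ge 3$, Theorem~\ref{min} applies and forces $S$ to be mutation-equivalent to one of $E_6,X_6,\t G_2,F_4$; but these have orders $6,6,3,4$ and mutation preserves the number of vertices, so $|S|\le 6$, contradicting $d\ge 7$. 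Hence some $S\setminus x$ is non-decomposable, and by (i) the subdiagram $S_1:=S\setminus x$ is the desired order-$(d-1)$ non-decomposable mutation-finite subdiagram.

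The argument is short, and the only delicate point is the choice of which earlier result to invoke for the order bound. Theorem~\ref{g8} alone (minimal non-decomposable diagrams have at most $7$ vertices) would settle every $d\ge 8$ but leaves open $d=7$, since that bound does not a priori exclude a minimal non-decomposable diagram of order exactly $7$. The genuine care is therefore needed at the borderline $d=7$, which is why I would route the contradiction through the full classification Theorem~\ref{min} (equivalently, one could cite Remark~\ref{no7}); this excludes order $7$ and makes a single uniform argument work for all $d\ge 7$. I would also verify that there is no circularity: the proof of Theorem~\ref{min} uses only Theorem~\ref{g8} together with a finite computer enumeration, and neither Lemma~\ref{m1} nor the classification Theorem~\ref{all} in whose proof this lemma is used.
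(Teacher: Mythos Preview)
Your proof is correct. The paper itself does not prove this lemma: it declares both Lemma~\ref{submt} and Lemma~\ref{m1} ``evident'' and cites~\cite[Lemma~6.2]{FST1}, so your argument is a faithful fleshing-out of the intended reasoning, including the correct observation that Theorem~\ref{g8} alone does not handle the borderline case $d=7$ and that one must invoke Theorem~\ref{min} (or Remark~\ref{no7}) instead, with no circularity since Theorem~\ref{min} is established prior to and independently of Lemma~\ref{m1}.
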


\begin{cor}
\label{p1}
Suppose that for some $d\ge 7$ there are no non-de\-com\-po\-sable  mutation-finite diagrams of order $d$. Then order of any non-decomposable  mutation-finite diagram does not exceed $d-1$.
\end{cor}

\begin{proof}[Proof of Theorem~\ref{all}]
In the proof of Theorem~\ref{g8} we listed all non-decomposable mutation-finite diagrams of order at most $7$. Now we want to show that all non-decomposable mutation-finite diagrams of order at least $8$ (in fact, at least $7$, see Remark~\ref{no7}) are skew-symmetric.

Suppose that $S$ is a non-decomposable mutation-finite diagram of order at least $8$, and $S$ is not skew-symmetric. Then $S$ contains a minimal non-decomposable mutation-finite subdiagram $S_1$ which is mutation-equivalent to a diagram of one of the four types shown on Fig.~\ref{minfig} (Theorem~\ref{g8}). If $S_1$ is mutation-equivalent to $\t{G}_2$ or $F_4$ then, taking any connected subdiagram $S'\subset S$ of order $7$ we see that $S'$ is mutation-infinite, which implies that $S$ is mutation-infinite, too. Therefore, $S_1$ is mutation-equivalent to $E_6$ or $X_6$.

Notice that any connected subdiagram $S'\subset S$ of order $7$ containing $S_1$ is skew-symmetric (otherwise $S'$ is mutation-infinite due to Remark~\ref{no7}), so it is mutation-equivalent to one of $E_7$, $X_7$, and $\t{E}_6$. According to Lemma~\ref{submt}, we may assume that $S'$ coincides with $E_7$, $X_7$, or $\t{E}_6$.

Suppose that $|S|=8$, and consider the unique vertex $x\in S\setminus S'$. If $x$ is joined with some vertex of $S_1$, then $S_2=\l S_1,x\r$ is of order $7$, so $S_2$ is skew-symmetric. This implies that the only edge which breaks skew-symmetry of $S$ is one joining $x$ with $S'\setminus S_1$. Therefore, this edge cannot be contained in any cycle: otherwise $S$ is not skew-symmetrizable. In particular, $x$ is not joined with any vertex of $S_1$.

In $X_7$ and $\t{E}_6$ every vertex is contained in some $X_6$ or $E_6$ respectively, so there is no way to add a vertex to $X_7$ or $\t{E}_6$ to get a mutation-finite diagram that is not skew-symmetric. In $E_7$ there is a unique vertex not contained in $E_6$. Attaching to that vertex an edge of weight $2$ or $4$ we get mutation-infinite diagrams~\cite{K} (weight $3$ is prohibited by Remark~\ref{no7}).
Thus, all non-decomposable mutation-finite diagrams of order $8$ are skew-symmetric.

Now we proceed in the same way for diagrams of order $9$. Any such non-decomposable mutation-finite diagram is mutation-equivalent to one (denote it by $S$) containing $E_6$ or $X_6$. As it was proved, any connected subdiagram of $S$ of order $8$ containing $E_6$ or $X_6$ is skew-symmetric, so, performing some mutations, we can assume that $S$ contains $S'$ equal to one of $E_6^{(1,1)}$, $\t{E}_7$, $E_8$, and the remaining vertex of $S$ is not joined with any of $E_7$ and $\t{E}_6$ contained in $S'$. Again, any vertex of $E_6^{(1,1)}$ and $\t{E}_7$ belongs to some $\t{E}_6$ or $E_7$, and there is a unique vertex of $E_8$ not contained in $E_7$. Attaching to that vertex an edge of weight $2$ or $4$ we get mutation-infinite diagrams, so all non-decomposable diagrams of order $9$ are skew-symmetric.

We repeat the same procedure for diagrams of order $10$ without any new results (here we attach a node to $\t{E}_8$, while any vertex of $E_7^{(1,1)}$ belongs to some $\t{E}_7$), and then for diagrams of order $11$ (here any vertex of $E_8^{(1,1)}$ belongs to some $\t{E}_8$). Finally, we see that there are no non-decomposable diagrams of order $11$. In view of Corollary~\ref{p1}, this completes the proof.

\end{proof}

Now we will reformulate the result of this section in terms of matrices. We recall two evident statements about exchange matrices and their diagrams.

\begin{lemma}
\label{dmatr}
Diagram of mutation-finite matrix is mutation-finite.

\end{lemma}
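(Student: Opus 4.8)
The statement to prove is Lemma~\ref{dmatr}: if a matrix $B$ is mutation-finite, then its diagram is mutation-finite. The plan is to exploit the fact that diagram mutation is \emph{induced} by matrix mutation — this was recalled just after Figure~\ref{quivermut}, where it is stated that $\mu_k(S)$ is by definition the diagram associated to $\mu_k(B)$. So the map $B \mapsto S(B)$ sending a matrix to its diagram intertwines the two mutation operations: $S(\mu_k(B)) = \mu_k(S(B))$ for every direction $k$. Consequently, for any finite word $w = (k_1,\dots,k_r)$ in the mutation directions, $S(\mu_w(B)) = \mu_w(S(B))$, and therefore the image under $S(\cdot)$ of the mutation class of $B$ is exactly the mutation class of $S(B)$.

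The argument is then immediate: a surjection from a finite set onto another set forces the target to be finite. Concretely, first I would observe that the mutation class of $B$, call it $\mathcal{M}(B)$, is finite by hypothesis. Next, I would note that the assignment $B' \mapsto S(B')$ restricts to a well-defined map $\mathcal{M}(B) \to \mathcal{M}(S(B))$, which is surjective precisely because every element of $\mathcal{M}(S(B))$ is of the form $\mu_w(S(B)) = S(\mu_w(B))$ for some word $w$, and $\mu_w(B) \in \mathcal{M}(B)$. Hence $|\mathcal{M}(S(B))| \le |\mathcal{M}(B)| < \infty$, so the diagram $S(B)$ is mutation-finite. (This also re-proves the easy direction of Remark~\ref{d-m}.)

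There is essentially no obstacle here; the only point requiring a word of care is the compatibility $S(\mu_k(B)) = \mu_k(S(B))$, but this is built into the very definition of diagram mutation given in the text (and is the content of the cited result of Fomin--Zelevinsky~\cite{FZ2} encoded in Figure~\ref{quivermut}), so it may simply be invoked. One might optionally remark that the converse fails in general only in the sense that distinct matrices share a diagram — but since finitely many matrices correspond to a given diagram, even the converse implication holds, which is exactly what Remark~\ref{d-m} asserts; however, for the present lemma only the stated direction is needed, and the one-line counting argument above suffices.
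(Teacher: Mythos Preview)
Your argument is correct; the paper does not actually supply a proof, merely labeling the statement as ``evident'' (together with Lemma~\ref{find}) and moving on. Your surjectivity argument via $S(\mu_k(B))=\mu_k(S(B))$ is precisely the natural way to unpack this evidence, so there is nothing to add.
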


\begin{lemma}
\label{find}
Any diagram is represented only by a finite number of skew-symmetriz\-able matrices.

\end{lemma}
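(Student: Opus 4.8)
The plan is to translate the combinatorial data of a diagram into linear-algebraic constraints on the matrix entries and show these leave only finitely many choices. Let $S$ be a diagram on vertices $\{1,\dots,n\}$, and let $B$ be a skew-symmetrizable integer matrix with diagram $S$. Recall that the weight of the edge between $i$ and $j$ is $-b_{ij}b_{ji}$, and that $b_{ij}$ and $b_{ji}$ have opposite signs (so the product $-b_{ij}b_{ji}$ is a nonnegative integer), while the orientation of the edge records the sign of $b_{ij}$. Thus for each pair $i\ne j$, the diagram $S$ prescribes the sign of $b_{ij}$ and the value of the product $|b_{ij}|\,|b_{ji}|$. The first step is to observe that this already pins down $\{|b_{ij}|,|b_{ji}|\}$ up to finitely many factorizations: a fixed positive integer $w$ has only finitely many ordered factorizations $w = p\cdot q$ with $p,q$ positive integers, and if $w=0$ then $b_{ij}=b_{ji}=0$. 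So, pair by pair, there are only finitely many candidate values for $(b_{ij},b_{ji})$ compatible with the edge data.

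The second step is simply to multiply these finite choices over all $\binom{n}{2}$ pairs: since the diagonal entries of $B$ are $0$ (a diagram has no loops), the whole matrix $B$ is determined by the off-diagonal pairs, and there are finitely many pairs, each with finitely many admissible values. Hence only finitely many integer matrices $B$ have diagram $S$. One does not even need to invoke skew-symmetrizability or the perfect-square condition on chordless cycles for the finiteness claim — those conditions only cut down the list further — although one may remark that imposing them gives the exact count in specific examples (as in Example~\ref{diff}).

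There is essentially no obstacle here; the only point requiring a word of care is the bookkeeping of signs versus orientations (making sure that ``$b_{ij}>0$'' corresponds to ``edge directed from $i$ to $j$'' consistently with the convention fixed after Theorem~\ref{thm:FinTypeClass}), and the degenerate case of a zero weight, which forces both entries to vanish rather than offering a factorization. With that dispatched, the statement follows immediately. The same reasoning, incidentally, reproves the remark made earlier in the text that ``only finitely many matrices may correspond to the same diagram,'' so this lemma is recorded here only for convenient reference in the matrix-theoretic reformulation that follows.
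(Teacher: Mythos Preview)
Your proof is correct and is precisely the ``evident'' argument the paper has in mind: the text does not supply a proof of this lemma at all, merely recording it as one of two ``evident statements,'' and had already remarked in Section~\ref{cluster} that ``it is easy to see that only finitely many matrices may correspond to the same diagram.'' Your divisor-counting argument is exactly the intended justification.
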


Combining Lemmas~\ref{dmatr} and~\ref{find}, we get the following lemma.

\begin{lemma}
\label{iff}
A skew-symmetrizable matrix is mutation-finite if and only if its diagram is mutation-finite.

\end{lemma}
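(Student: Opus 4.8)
\textbf{Proof proposal for Lemma~\ref{iff}.}

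The plan is to combine the two preceding lemmas in the two obvious directions. First I would prove the ``only if'' direction: suppose $B$ is a mutation-finite skew-symmetrizable matrix, and let $S$ be its diagram. By Lemma~\ref{dmatr}, the diagram of a mutation-finite matrix is mutation-finite, so $S$ has finite mutation class. This direction is immediate once Lemma~\ref{dmatr} is in hand, since every matrix in the mutation class of $B$ maps to a diagram in the mutation class of $S$ (mutations of matrices induce mutations of diagrams, as recalled in Section~\ref{cluster}), and the image of a finite set is finite.

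For the ``if'' direction, suppose the diagram $S$ of $B$ is mutation-finite. Let $\mathcal{S}$ be the (finite) mutation class of $S$. I would observe that every matrix $B'$ mutation-equivalent to $B$ has diagram lying in $\mathcal{S}$: indeed, if $B' = \mu_{k_1}\cdots\mu_{k_\ell}(B)$, then the diagram of $B'$ is $\mu_{k_1}\cdots\mu_{k_\ell}(S)\in\mathcal{S}$, because matrix mutation induces diagram mutation. Thus the mutation class of $B$ is contained in the union $\bigcup_{S'\in\mathcal{S}}\{\text{matrices with diagram }S'\}$. By Lemma~\ref{find}, each such set is finite, and there are finitely many $S'\in\mathcal{S}$, so this union is a finite set. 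Hence the mutation class of $B$ is finite, i.e., $B$ is mutation-finite.

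There is no real obstacle here; the only thing to be slightly careful about is the direction of the implication ``mutation of matrix induces mutation of diagram,'' which is exactly the statement recalled before Figure~\ref{quivermut} (and is the content that makes both inclusions of mutation classes work). Putting the two directions together yields the biconditional, completing the proof. This statement is the bridge that lets us pass from Theorem~\ref{all} (classification of non-decomposable mutation-finite \emph{diagrams}) back to a statement about \emph{matrices}, and together with Corollary~\ref{mut-fin} it gives Theorem~\ref{all-s}.
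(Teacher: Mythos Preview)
Your proposal is correct and is exactly the approach the paper takes: it simply states that the lemma follows by combining Lemmas~\ref{dmatr} and~\ref{find}, and you have spelled out that combination in the natural way. There is nothing to add.
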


As an immediate corollary of Lemma~\ref{iff} and Theorem~\ref{all}, we obtain the following theorem.

\begin{theorem}
\label{all-s}
A skew-symmetrizable $n\times n$ matrix, $n\ge 3$, that is not skew-symmetric, has finite mutation class if and only if its diagram is either s-decomposable or mutation-equivalent to one of the seven types $\t G_2$, $F_4$, $\t F_4$, $G_2^{(*,+)}$, $G_2^{(*,*)}$, $F_4^{(*,+)}$, $F_4^{(*,*)}$ shown on Fig.~\ref{allfign}.
\end{theorem}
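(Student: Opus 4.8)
The plan is to deduce Theorem~\ref{all-s} entirely from Theorem~\ref{all} together with the two reduction lemmas~\ref{dmatr}, \ref{find} (and their combination, Lemma~\ref{iff}). First I would recall that Lemma~\ref{iff} gives the key equivalence: a skew-symmetrizable matrix $B$ is mutation-finite if and only if its diagram $S$ is mutation-finite. This reduces the entire classification of mutation-finite skew-symmetrizable matrices to the classification of mutation-finite diagrams, which is exactly the content of the preceding sections. So the proof is really just a matter of transcribing Theorem~\ref{all} into the language of matrices and isolating the non-skew-symmetric case.

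The main step is then as follows. Let $B$ be a skew-symmetrizable $n\times n$ matrix with $n\ge 3$ that is not skew-symmetric, and let $S$ be its diagram. By Lemma~\ref{iff}, $B$ has finite mutation class if and only if $S$ is mutation-finite. Now I would split into two cases according to whether $S$ is s-decomposable or not. If $S$ is s-decomposable, then by Corollary~\ref{mut-fin} it is automatically mutation-finite, so $B$ is mutation-finite, and this is one of the two alternatives in the statement. If $S$ is non-decomposable, then I invoke Theorem~\ref{all}: a connected non-decomposable mutation-finite diagram of order greater than $2$ is mutation-equivalent to one of the eleven skew-symmetric diagrams of Figure~\ref{allfig} or to one of the seven diagrams $\t G_2$, $F_4$, $\t F_4$, $G_2^{(*,+)}$, $G_2^{(*,*)}$, $F_4^{(*,+)}$, $F_4^{(*,*)}$ of Figure~\ref{allfign}. (For a non-connected diagram, one argues componentwise, each connected component being treated as above.) Since $B$ is assumed not skew-symmetric, its diagram $S$ must have an edge whose weight is not a perfect square; a diagram mutation-equivalent to one of the eleven skew-symmetric diagrams has all weights perfect squares throughout its mutation class, hence cannot be the diagram of a non-skew-symmetric matrix. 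Therefore $S$ must be mutation-equivalent to one of the seven exceptional diagrams on Figure~\ref{allfign}, which is the second alternative in the statement. Conversely, each of the seven exceptional diagrams is mutation-finite (again by the proof of Theorem~\ref{min}), so if $S$ is mutation-equivalent to one of them then $B$ is mutation-finite.

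The only genuinely subtle point, and the one I would be most careful about, is the passage between ``the matrix is not skew-symmetric'' and ``the diagram is not that of a skew-symmetric matrix.'' A non-skew-symmetric matrix can in principle have the same diagram as a skew-symmetric one only if all weights are perfect squares; but the discussion in Section~\ref{cluster} shows that whenever all weights of a diagram $S$ are perfect squares there \emph{does} exist a skew-symmetric matrix with diagram $S$ — however this does not preclude a non-skew-symmetric matrix also realizing $S$. The correct and clean way around this is not to argue at the level of a single matrix but to use the mutation-invariance: if $S$ is mutation-equivalent to one of the eleven skew-symmetric diagrams then every diagram in its mutation class has all weights perfect squares; meanwhile the hypothesis that $B$ is not skew-symmetric means that in the diagram $S=S(B)$ itself the product constraint is met but the matrix entries are genuinely asymmetric — and one checks directly (this is essentially Example~\ref{diff} made systematic, or a one-line case analysis on the possible $2\times 2$ subpatterns $\left(\begin{smallmatrix}0&b_{ij}\\b_{ji}&0\end{smallmatrix}\right)$ with $b_{ij}b_{ji}$ a perfect square but $|b_{ij}|\ne|b_{ji}|$) that such a configuration already forces an edge structure that cannot occur in the diagram of any skew-symmetric matrix once one tracks the skew-symmetrizing coefficients $d_i$ through the diagram. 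So the real obstacle is bookkeeping: confirming that the seven exceptional diagrams of Figure~\ref{allfign} are precisely those non-decomposable mutation-finite diagrams whose weight data (an edge of weight $2$ or $3$, with the appropriate $D$) is incompatible with skew-symmetry, while the eleven diagrams of Figure~\ref{allfig} exhaust the skew-symmetric non-decomposable ones. Once Theorem~\ref{all} is in hand this is immediate from inspection of the two figures, so I expect the proof to be short: essentially ``combine Lemma~\ref{iff} with Theorem~\ref{all}, and drop the skew-symmetric cases.''
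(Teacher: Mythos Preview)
Your approach is exactly the paper's: the proof there is literally the single sentence ``as an immediate corollary of Lemma~\ref{iff} and Theorem~\ref{all}.''  You are actually more careful than the authors in isolating the one point that genuinely needs checking --- why a non-skew-symmetric matrix cannot have its diagram in the mutation class of one of the eleven skew-symmetric exceptions --- and you are right that the paper glosses over it.

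However, your first argument for this point is wrong: a non-skew-symmetric $B$ need \emph{not} have an edge of non-square weight.  For instance
\[
B=\begin{pmatrix}0&4&0\\-1&0&1\\0&-1&0\end{pmatrix}
\]
is skew-symmetrizable (with $D=\mathrm{diag}(4,1,1)$), not skew-symmetric, and has diagram weights $4$ and $1$.  So ``all weights perfect squares'' does not by itself force skew-symmetry, and you correctly retreat from that claim.  The clean replacement for your hand-wavy ``track the $d_i$ through the diagram'' is this: the skew-symmetrizer $D$ is mutation-invariant, and a weight-$1$ edge between $i$ and $j$ forces $d_i=d_j$.  Each of the eleven classes $E_6,\dots,E_8^{(1,1)},X_6,X_7$ contains a representative in which the weight-$1$ edges form a connected spanning subgraph (for $E_6,\dots,\widetilde E_8$ this is trivial since they are simply laced; for $X_6,X_7,E_n^{(1,1)}$ one sees it by inspection of Fig.~\ref{allfig}).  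Hence all $d_i$ are equal for any matrix realizing any diagram in these classes, and such a matrix is automatically skew-symmetric.  With that observation in place, the proof really is the one-liner you anticipated.
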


\section{Unfoldings of mutation-finite matrices and diagrams}
\label{unfoldings}

In this section we complete the construction of unfoldings for all mutation-finite diagrams, and specify the corresponding matrices. We also construct unfoldings for all mutation-finite matrices with non-decomposable diagrams.
 
First, we consider mutation-finite matrices admitting local unfoldings. As it is shown in Section~\ref{unfolding-s}, this leads to a block-decomposable unfolding for every s-decompos\-able diagram. All these unfoldings appear to be block-decomposable. Next, we show examples of non-local unfoldings for matrices with s-decompos\-able diagrams. Finally, we present unfoldings for all mutation-finite matrices with non-decomposable diagrams. These unfoldings are also mutation-finite but have (usually) non-decomposable diagrams. In particular, we obtain the following generalization of the results of Section~~\ref{unfolding-s}.

\begin{theorem}
\label{unf}
Any s-decomposable diagram admits an unfolding to a diagram arising from ideal tagged triangulation of a marked bordered surface.
Any mutation-finite matrix with non-decomposable diagram admits an unfolding to a mutation-finite skew-symmetric matrix. 
\end{theorem}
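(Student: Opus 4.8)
The statement has two parts, and the first is essentially a repackaging of what has already been proved. By Theorem~\ref{unfolding} every s-decomposable diagram $S$ has a block-decomposable unfolding $\h S$; and, as recalled in Section~\ref{blockdecomp} (following~\cite[Section~13]{FST}), block-decomposable diagrams are precisely the adjacency diagrams of arcs of ideal tagged triangulations of bordered surfaces with marked points. Composing these two facts proves the first sentence, and nothing further is needed for it.

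For the second part, the plan is to reduce to a finite list of matrices and then treat its members explicitly. A skew-symmetric matrix is its own unfolding (take all $d_i=1$), so it is enough to consider a non-skew-symmetric mutation-finite matrix $B$ with non-decomposable diagram; by Lemma~\ref{iff} this diagram is mutation-finite, hence by Theorem~\ref{all-s} it is mutation-equivalent to one of the seven diagrams of Figure~\ref{allfign} (for order two, the only non-decomposable diagram appearing inside a mutation-finite one is $G_2$, whose unfolding is $D_4$; see Remark~\ref{g2} and Figure~\ref{g2u}). If $\bar\mu$ is a mutation sequence carrying the diagram of $B$ to such a representative $S_0$, then $\bar\mu(B)$ is mutation-equivalent to $B$ and has diagram $S_0$; by Lemma~\ref{find} there are only finitely many matrices with diagram $S_0$; and it is immediate from the definition of unfolding that $\h\mu_k(C)$ is an unfolding of $\mu_k(B)$ whenever $C$ is an unfolding of $B$. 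So it suffices to exhibit a skew-symmetric unfolding for each of the finitely many matrices whose diagram is one of the representatives.

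Each representative is a folding of a mutation-finite skew-symmetric diagram along a diagram automorphism --- for instance $F_4$ is the folding of $E_6$ by its order-two symmetry, and $G_2$ the folding of $D_4$ by its triality (Figure~\ref{g2u}); the remaining cases $\t G_2$, $\t F_4$, $G_2^{(*,+)}$, $G_2^{(*,*)}$, $F_4^{(*,+)}$, $F_4^{(*,*)}$ are handled the same way. The candidate unfolding matrix $C$ is obtained by reversing the folding, the index sets $E_i$ being the orbits of the automorphism on the vertices. For each of the finitely many matrices $B_0$ on the list I would write $C$ out explicitly, verify conditions (1) and (2), and verify that $C$ is mutation-finite; the last point makes the orbit $\{\h\mu_{k_1}\cdots\h\mu_{k_m}(C)\}$ finite, so that checking that (1) and (2) are preserved along every composite mutation becomes a finite, computer-assisted verification. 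This yields a mutation-finite skew-symmetric unfolding for each $B_0$, which together with the reduction above completes the second part.

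The one real difficulty is concentrated in the second part: selecting the right $C$ for each representative and carrying out the propagation check. A naive symmetrization need not suffice --- Example~\ref{good-unf} exhibits a pair $(B,C)$ satisfying (1) and (2) for which (2) already fails after a single mutation --- so for each representative one must single out the unfolding compatible with the whole (finite) mutation class, and then confirm this along every mutation path. The first part, in contrast, is a direct consequence of Theorem~\ref{unfolding} together with the block-decomposable/triangulation dictionary, and poses no difficulty.
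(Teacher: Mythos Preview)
Your proposal is correct and matches the paper's approach: the first part is exactly Remark~\ref{proof2} (the local unfoldings of Section~\ref{unfolding-st} are block-decomposable by construction, i.e.\ Theorem~\ref{unfolding}), and the second part is Remark~\ref{proof1}, where the paper reduces via Theorem~\ref{all-s} to the finite list of matrices in Table~\ref{non}, writes down an explicit skew-symmetric unfolding for each, and verifies the unfolding property over the finite mutation class by computer. Your automorphism-folding heuristic is precisely how those table entries are found (e.g.\ $F_4\!\to\!E_6$, $\t F_4\!\to\!\t E_6$ or $\t E_7$, $G_2^{(*,+)}\!\to\!E_6^{(1,1)}$, etc.), and your observation that unfoldings transport along mutations is what lets one check only one representative per mutation class.
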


\subsection{Local unfoldings}
\label{unfolding-st}
In Section~\ref{unfolding-s} we constructed a local unfolding for every s-decomposable diagram. Let us describe the choice of matrices $B$ and $C$ corresponding to a diagram $S$ and its local unfolding $\h S$ respectively.

These matrices can be easily reconstructed by looking at the local unfoldings of blocks, see Table~\ref{reg}. To each edge of weight $4$ we assign a skew-symmetric submatrix. To each new block we assign a submatrix in such a (unique) way that for each outlet $x_i$ the number $d_i$ is a unit. In terms of matrix elements, this means that for any outlet $x_i$ and entry $b_{ij}\ne -b_{ji}$ the inequality $|b_{ij}|<|b_{ji}|$ holds if and only if $i<j$. The local unfoldings of blocks are diagrams of unfoldings of these matrices with coprime numbers $d_i$.

Now we take any block decomposition of a diagram $S$, assign to each block $S_j$ a matrix $B_j$ defined above (for skew-symmetric blocks the matrix is uniquely defined), and then glue all them in a natural way to obtain matrix $B$ with diagram $S$. In terms of matrices ``gluing'' is equivalent to summation of matrices, composed of $B_j$ at corresponding place and zeros outside. Since $d_i=1$ for any outlet $x_i$, after gluing we still have  $|b_{ij}|<|b_{ji}|$ if and only if $i<j$ and $b_{ij}\ne -b_{ji}$.

To obtain an unfolding $C$ of $B$ we take unfoldings $C_j$ of all matrices $B_j$ and glue them along outlets. Again, this procedure is well-defined since for every outlet $x_i$ the number $d_i$ is equal to one.

\begin{table}
\caption{Local unfoldings of blocks}
\label{reg}
\begin{tabular}{|c|c|c|c|c|}
\hline
Block
&Diagram&Matrix&Unfolding
&\begin{tabular}{c}
Diagram\\
unfolding
\end{tabular}\\
\hline
$\mr{\t{III}a}$&
\psfrag{2-}{\tiny $2$}\psfrag{3at}{}\raisebox{-5.5mm}{\epsfig{file=diagrams_pic/block3at.eps,width=0.09\linewidth}}&
{\small $\left(\begin{smallmatrix}
0&-1\\
2&0\\
\end{smallmatrix}\right)$}&
{\small $\left(\begin{smallmatrix}
0&-1&-1\\
1&0&0\\
1&0&0\\
\end{smallmatrix}\right)$}&\psfrag{3a}{}\raisebox{-3.5mm}[7mm][4mm]{\epsfig{file=diagrams_pic/block3a.eps,width=0.09\linewidth}}\\
\hline
$\mr{\t{III}b}$&
\psfrag{2-}{\tiny $2$}\psfrag{3bt}{}\raisebox{-5.5mm}{\epsfig{file=diagrams_pic/block3bt.eps,width=0.09\linewidth}}&
{\small $\left(\begin{smallmatrix}
0&-2\\
1&0\\
\end{smallmatrix}\right)$}&
{\small $\left(\begin{smallmatrix}
0&0&-1\\
0&0&-1\\
1&1&0\\
\end{smallmatrix}\right)$}&\psfrag{3b}{}\raisebox{-3.5mm}[7mm][4mm]{\epsfig{file=diagrams_pic/block3b.eps,width=0.09\linewidth}}\\
\hline
$\mr{\t{IV}}$&
\psfrag{2-}{\tiny $2$}\psfrag{4t}{}\raisebox{-7.5mm}{\epsfig{file=diagrams_pic/block4t.eps,width=0.09\linewidth}}&
{\small $\left(\begin{smallmatrix}
0&1&-1\\
-1&0&1\\
2&-2&0\\
\end{smallmatrix}\right)$}&
{\small $\left(\begin{smallmatrix}
0&1&-1&-1\\
-1&0&1&1\\
1&-1&0&0\\
1&-1&0&0\\
\end{smallmatrix}\right)$}&\psfrag{4}{}\raisebox{-8mm}[10mm][9.6mm]{\epsfig{file=diagrams_pic/block4.eps,width=0.09\linewidth}}\\
\hline
$\mr{\t{V}_1}$&
\psfrag{2-}{\tiny $2$}\psfrag{51t}{}\raisebox{-11.5mm}[10mm][9mm]{\epsfig{file=diagrams_pic/block51t.eps,width=0.09\linewidth}}&
{\small $\left(\begin{smallmatrix}
0&1&-1&1\\
-1&0&1&0\\
2&-2&0&-2\\
-1&0&1&0\\
\end{smallmatrix}\right)$}&
{\small $\left(\begin{smallmatrix}
0&1&-1&-1&1\\
-1&0&1&1&0\\
1&-1&0&0&-1\\
1&-1&0&0&-1\\
-1&0&1&1&0
\end{smallmatrix}\right)$}&\psfrag{5}{}\raisebox{-6.2mm}{\epsfig{file=diagrams_pic/block5.eps,width=0.10\linewidth}}\\
\hline
$\mr{\t{V}_2}$&
\psfrag{2-}{\tiny $2$}\psfrag{52t}{}\raisebox{-11.5mm}[10mm][9mm]{\epsfig{file=diagrams_pic/block52t.eps,width=0.09\linewidth}}&
{\small $\left(\begin{smallmatrix}
0&2&-2&2\\
-1&0&1&0\\
1&-1&0&-1\\
-1&0&1&0\\
\end{smallmatrix}\right)$}&
{\small $\left(\begin{smallmatrix}
0&0&1&-1&1\\
0&0&1&-1&1\\
1&1&0&1&0\\
1&1&-1&0&-1\\
1&1&0&1&0\\
\end{smallmatrix}\right)$}&\psfrag{5}{}\raisebox{-6.2mm}{\epsfig{file=diagrams_pic/block5.eps,width=0.10\linewidth}}\\
\hline
$\mr{\t{V}_{12}}$&
\psfrag{2-}{\tiny $2$}\psfrag{4}{\tiny $4$}\psfrag{512t}{}\raisebox{-7.5mm}{\epsfig{file=diagrams_pic/block512t.eps,width=0.09\linewidth}}&
{\small $\left(\begin{smallmatrix}
0&2&-2\\
-1&0&1\\
2&-2&0\\
\end{smallmatrix}\right)$}&
{\small $\left(\begin{smallmatrix}
0&0&1&-1&-1\\
0&0&1&-1&-1\\
-1&-1&0&1&1\\
1&1&-1&0&0\\
1&1&-1&0&0\\
\end{smallmatrix}\right)$}&\psfrag{5}{}\raisebox{-6.6mm}[9mm][7mm]{\epsfig{file=diagrams_pic/block5.eps,width=0.10\linewidth}}\\
\hline
$\mr{\t{VI}}$&
\psfrag{2-}{\tiny $2$}\psfrag{6t}{}\raisebox{-9.9mm}{\epsfig{file=diagrams_pic/block6t.eps,width=0.10\linewidth}}&
{\small $\left(\begin{smallmatrix}
0&1&0&1&-1\\
-1&0&-1&0&1\\
0&1&0&1&-1\\
-1&0&-1&0&1\\
2&-2&2&-2&0
\end{smallmatrix}\right)$}&
{\small $\left(\begin{smallmatrix}
0&1&0&1&-1&-1\\
-1&0&-1&0&1&1\\
0&1&0&1&-1&-1\\
-1&0&-1&0&1&1\\
1&-1&1&-1&0&0\\
1&-1&1&-1&0&0\\
\end{smallmatrix}\right)$}&
\psfrag{6t}{}\raisebox{-11.5mm}[13.5mm][12.5mm]{\epsfig{file=diagrams_pic/unf6_0.eps,width=0.10\linewidth}}
\\
\hline
\end{tabular}
\end{table}

\begin{example}\label{ex:non-simple}
Consider a diagram $S$ shown on Fig.~\ref{ex}, left. It has a block decomposition shown in the middle of the figure.

\begin{figure}[!hb]
\begin{tabular}{cp{1.5cm}cp{1.5cm}c}
\psfrag{2-}{\tiny $2$}
\raisebox{1mm}{\epsfig{file=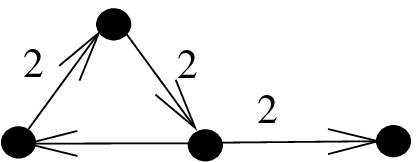,width=0.2\linewidth}}
&&
\psfrag{2-}{\tiny $2$}
\raisebox{1mm}{\epsfig{file=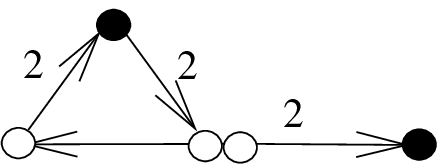,width=0.2\linewidth}}
&&
\psfrag{2-}{\tiny $2$}
\raisebox{-1mm}{\epsfig{file=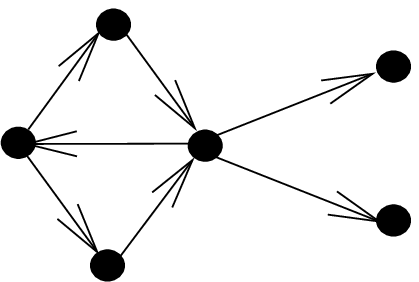,width=0.15\linewidth}}\\
$S$&&$S_{\mr{dec}}$&&$\h S$
\end{tabular}
\caption{Diagram $S$ with block decomposition $S_{\mr{dec}}$ and unfolding $\h S$}
\label{ex}
\end{figure}

Let $S_1$ and $S_2$ be blocks of type $\t{\mr{IV}}$ and ${\mr{\t{III}b}}$ respectively. Then the corresponding matrices are
$$
B_1=\begin{pmatrix}
0&1&-1\\
-2&0&2\\
1&-1&0
\end{pmatrix}
\qquad{{\mathrm{and}}}\qquad
B_2=\begin{pmatrix}
0&1\\
-2&0
\end{pmatrix},
$$
so we can write down the matrix
$$
B=\begin{pmatrix}
0&1&-1&0\\
-2&0&2&0\\
1&-1&0&1\\
0&0&-2&0
\end{pmatrix}
$$
corresponding to diagram $S$. Unfoldings of $B_1$ and $B_2$ are
$$
C_1=\begin{pmatrix}
0&1&1&-1\\
-1&0&0&1\\
-1&0&0&1\\
1&-1&-1&0
\end{pmatrix}
\qquad{{\mathrm{and}}}\qquad
C_2=\begin{pmatrix}
0&1&1\\
-1&0&0\\
-1&0&0
\end{pmatrix}
$$
Gluing them together, we obtain an unfolding $C$ of $B$,
$$
C=\begin{pmatrix}
0&1&1&-1&0&0\\
-1&0&0&1&0&0\\
-1&0&0&1&0&0\\
1&-1&-1&0&1&1\\
0&0&0&-1&0&0\\
0&0&0&-1&0&0
\end{pmatrix}
$$
The diagram $\h S$ of $C$ is shown on Fig.~\ref{ex} on the right.
\end{example}

\begin{remark}
\label{proof2}
By construction, diagrams of all the unfoldings described in the section are block-decomposable. This proves the first statement of Theorem~\ref{unf}.

\end{remark}

\subsection{Matrices with s-decomposable diagrams}
\label{sec-irr-unf}
Now consider arbitrary skew-sym\-metrizable matrix $B$ with s-decomposable diagram $S$. Let $x_1,\dots,x_n$ be vertices of $S$. We can assume numbers $d_1,\dots,d_n$ to be coprime (otherwise, divide all of them by the common divisor). Take any block decomposition of $S$.

\begin{lemma}
\label{equal}
For any two blocks $S_1$ and $S_2$ and any outlets $x_i\in S_1$ and $x_j\in S_2$ the numbers $d_i$ and $d_j$ are equal.

\end{lemma}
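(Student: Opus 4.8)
The plan is to argue that the "equal $d_i$" property propagates through a chain of blocks connecting $S_1$ and $S_2$, so it suffices to treat the case of two blocks sharing an outlet, and the case of a single block. For a single block, the claim is that all outlets $x_i$ of that block carry the same $d_i$; this is a finite check over the (old and new) block types, using condition (C) of the definition of unfolding together with the fact that any edge between two outlets is simple (weight one). Indeed, inside a block each pair of outlets is either non-adjacent or joined by a simple edge, and for the blocks of types $\t{\mr{III}}$, $\t{\mr{IV}}$, $\t{\mr V}_1$, $\t{\mr V}_2$, $\t{\mr V}_{12}$, $\t{\mr{VI}}$ (and the classical ones $\rm{I}$--$\rm{V}$) one reads off from Table~\ref{newblocks} and Fig.~\ref{bloki} that the outlets all attach to the unique "special" dead-end vertex (the one incident to the weight-$2$ edge) with total weight one, forcing $d_i$ to be constant on the outlets of the block; for blocks with at most one outlet there is nothing to prove.

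Next I would pass from "one block" to "two blocks glued along an outlet". Suppose $x\in S_1\cap S_2$ is a common outlet; if $x_i$ is an outlet of $S_1$ and $x_j$ an outlet of $S_2$, then by the single-block case $d_i=d_x=d_j$, since $x$ is an outlet of both $S_1$ and $S_2$. More care is needed because an outlet of $S_1$ may become a dead end of the glued diagram, or two blocks may share two outlets; but in all cases the shared vertices are outlets of the blocks they lie in, and the single-block statement pins their $d$-values together. Iterating along a path of blocks in the connected diagram $S$ (recall block-decomposability gives such a connecting chain, and a non-connected s-decomposable diagram is a disjoint union of mutually orthogonal connected ones, each handled separately), we conclude $d_i=d_j$ for any two outlets.

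The only genuine subtlety — and the step I expect to be the main obstacle — is making precise the claim that within each block every outlet attaches with total weight exactly one to the rest of the block, and that this together with (C) forces the common value. This is where one uses that the unfoldings in the second row of Table~\ref{newblocks} are indeed unfoldings (Lemma~\ref{blocks-un}) and that they are \emph{local} (Definition~\ref{def-simpl}): locality says precisely that $d_i=1$ for the outlets of an individual block, so a priori the block-level statement is "all outlets have $d_i=1$", which is stronger than needed. The point is then that after gluing blocks, condition (C) applied across the glued edges (which are simple, weight one, between outlets) forces the global $d_i$ of outlets of different blocks to coincide — but since the diagram $S$ has a \emph{fixed} matrix $B$ whose $D$ we may take with coprime entries, and outlets always have $d_i=1$ in the local picture, one must check that no outlet is "forced larger" by belonging to a cycle or by interacting with a multiple edge. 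This is ruled out because, by construction of a block decomposition, an outlet's edges to the rest of the diagram are all simple, so (C) reads $1\cdot d_j = (\text{weight})$ is a perfect square equal to $1$ whenever $x_j$ is also an outlet, giving $d_j=1$; hence all outlets have $d_i=d_j=1$, which in particular are equal. I would present this as the verification over the finite list of blocks, citing Table~\ref{newblocks} and Lemma~\ref{blocks-un}, and then the one-line propagation argument.
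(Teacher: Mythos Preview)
Your two-step skeleton --- prove the claim for a single block, then propagate along a chain of blocks sharing outlets --- is exactly the paper's approach, and the propagation step is fine. The problem is entirely in how you handle the single-block case.

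The numbers $d_i$ in this lemma are the diagonal entries of the symmetrizer $D$ for the \emph{given} matrix $B$ (an arbitrary skew-symmetrizable matrix with diagram $S$, see the paragraph before the lemma). They are not, a priori, the sizes $|E_i|$ of index sets in some unfolding, because at this point in Section~\ref{sec-irr-unf} no unfolding of $B$ has been constructed --- that is precisely what the section is building towards. So invoking condition (C) of the unfolding definition, Lemma~\ref{blocks-un}, or Definition~\ref{def-simpl} is irrelevant: those statements concern a \emph{different} matrix (the one admitting a local unfolding) and an auxiliary diagram $\h S$ that does not exist here. The relation you actually need is just $b_{ij}d_j=-b_{ji}d_i$; when the edge $x_ix_j$ is simple its weight $-b_{ij}b_{ji}$ equals $1$, hence $|b_{ij}|=|b_{ji}|=1$ and $d_i=d_j$. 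A glance at the block list shows that any two outlets of a single block are joined by a simple edge (blocks I, II, IV, $\t{\mr{IV}}$ have $2$ or $3$ outlets pairwise joined by simple edges; all other blocks have at most one outlet), so within each block all outlets share the same $d_i$. The paper phrases this as: any symmetrizer $D'$ of a matrix representing the block is constant on outlets, and the global $d_i$ equals $d_i'$ times a block-dependent scalar $d(S')$.

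Your stronger conclusion ``all outlets have $d_i=d_j=1$'' is actually false, and is refuted by the very next lemma (Lemma~\ref{weight}), which shows the common value $w$ can be $2$. The sentence ``(C) reads $1\cdot d_j=(\text{weight})$'' does not follow from anything in the setup; condition (C) relates weights in $\h S$ to weights in $S$, not $d_i$'s to edge weights. Drop the unfolding machinery entirely and replace it with the one-line symmetrizer argument above.
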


\begin{proof}
Looking at the list of blocks, it is easy to see that for any block $S'$ and any matrix $B'$ representing this block all outlets in $S'$ have the same numbers $d_i'$, where $d_i'$ are entries of diagonal matrix $D'$ skew-symmetrizing $B'$. Further, for any $x_i$ the number $d_i$ is a product of $d_i'$ and some number $d(S')$ which is the same for all vertices of $S'$. Thus, any two outlets in one block of $S_{\mr{dec}}$ have the same $d_i$. Now we are left to observe that for any outlets $x_i,x_j\in S_{\mr{dec}}$ there exists a sequence of outlets $x_{i_1}=x_i,x_{i_2},\dots,x_{i_k}=x_j$, such that any two consecutive entries belong to one block.

\end{proof}

Given $S$, $B$, and block decomposition of $S$, Lemma~ref{equal} allows us to define the {\it weight} of $S_{\mr{dec}}$ as the number $w=d_i$ for any outlet $x_i$ of any block. We call by a {\it regular part} of $S_{\mr{dec}}$ a union of blocks represented either by skew-symmetric matrices, or by matrices admitting a local unfolding. Regular part may not be connected, and every connected component of regular part always admits a local unfolding. Blocks of regular part are called {\it regular blocks}. The union of blocks admitting no local unfolding is called {\it irregular part} of $S_{\mr{dec}}$. Blocks of this part are {\it irregular blocks}.

\begin{lemma}
\label{weight}
Either $w=1$ and $B$ admits a local unfolding, or $w=2$.

\end{lemma}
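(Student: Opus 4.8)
The plan is to analyze the possible values of $d_i$ by separating the block decomposition $S_{\mr{dec}}$ into its regular and irregular parts, using the structural constraints on how blocks can be glued along outlets. First I would observe that, by Lemma~\ref{equal}, all outlets of all blocks carry the same number $d_i = w$, so the only issue is what values of $d_i$ can occur at the \emph{dead ends} of irregular blocks, and how the coprimality assumption on $d_1,\dots,d_n$ constrains $w$. Inspecting the list of new blocks in Table~\ref{newblocks} and their matrices in Table~\ref{reg}, one checks directly that for each irregular block (types $\t{\mr V}_1$, $\t{\mr V}_2$, $\t{\mr V}_{12}$, $\t{\mr{VI}}$, and the relevant non-skew-symmetric instances of $\mr{\t{III}}$, $\t{\mr{IV}}$) the skew-symmetrizing diagonal entries $d_i'$ of the local matrix $B'$ are either all equal to $1$ at the outlets with some entries equal to $2$ at certain dead ends, or vice versa; in every case the ratio between the outlet value and the dead-end values is $2$. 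Since $d_i = d_i' \cdot d(S')$ with $d(S')$ common to the block, this forces $w \in \{1,2\}$ once we impose that the global gcd of all $d_i$ equals $1$.

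Then I would argue the dichotomy itself. If every block of $S_{\mr{dec}}$ is regular — i.e. skew-symmetric or admitting a local unfolding — then by the construction in Section~\ref{unfolding-st} (gluing the local unfoldings $C_j$ of the block matrices $B_j$ along outlets, which is well-defined precisely because $d_i = 1$ at outlets) the matrix $B$ itself admits a local unfolding, and in that case $w = 1$ by Definition~\ref{def-simpl}. So the remaining case is that the irregular part is nonempty. Here I would take any irregular block $S'$; from the explicit matrices it has an outlet with local diagonal entry $1$ and a dead end with local diagonal entry $2$ (or the mirrored situation), so $w = d(S')$ and the dead-end value is $2d(S')$. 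Coprimality of all the $d_i$ then forces $d(S') = 1$, hence $w = 1$ is impossible to rule out by this argument alone — so I must be more careful: the point is that if \emph{some} irregular block is present, then $B$ does \emph{not} admit a local unfolding (a local unfolding would require every block's matrix to admit one, contradicting irregularity), so we are not in the first alternative, and we must instead show $w = 2$. This is where the real content lies.

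The key step, and the main obstacle, is showing that the presence of an irregular block forces $w = 2$ rather than $w = 1$. I would handle this by checking that for each irregular block type, the matrix $B'$ representing it inside $B$ cannot have $w = 1$: if $w = d(S') = 1$, then the local diagonal entries $d_i'$ would be exactly the global $d_i$ on that block, and one computes from Table~\ref{reg} that the irregular block's matrix in that normalization is \emph{not} skew-symmetrizable with those entries unless one rescales — more precisely, the constraint $b_{ij} d_j = -b_{ji} d_i$ together with the asymmetry $|b_{ij}| \ne |b_{ji}|$ at a dead end of an irregular block forces the dead-end $d$ to be an even multiple of the outlet $d$, so the outlet value $w$ must be even, and then coprimality of all $d_i$ (some of which equal $w$ and some $2w$ or involve the regular blocks where values are $1$ or $2$) pins down $w = 2$. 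I would finish by noting these are finitely many block types, so the verification is a bounded case check; I expect the subtlety to be bookkeeping which dead ends of which irregular block get value $2w$ versus $w$, and confirming the gcd computation is consistent across the whole glued diagram.
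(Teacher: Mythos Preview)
Your overall plan---exploit that within every non-skew-symmetric block the $d$-values differ by a factor of $2$, then use coprimality---is the same as the paper's. But your execution in the third paragraph contains a genuine error that makes the argument fail as written.

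You claim that for an irregular block the skew-symmetrizability relation ``forces the dead-end $d$ to be an even multiple of the outlet $d$, so the outlet value $w$ must be even.'' Both halves of this are wrong. Check the irregular $\mr{\t{III}a}$ matrix $\left(\begin{smallmatrix}0&-2\\1&0\end{smallmatrix}\right)$ from Table~\ref{irr}: the relation $b_{12}d_2=-b_{21}d_1$ gives $d_1=2d_2$, so the \emph{outlet} $d_1$ is twice the \emph{dead-end} $d_2$, not the reverse. The same pattern holds for every irregular block. Moreover, even granting your reversed premise, ``dead-end is an even multiple of outlet'' would make the dead-end even, not $w$; your conclusion does not follow from your own hypothesis. (Amusingly, the correct direction does yield $w$ even---but not by the chain of implications you wrote.)

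The paper avoids all of this by arguing directly on values rather than splitting into regular/irregular cases. If $w=1$, Section~\ref{unfolding-st} applies verbatim and gives a local unfolding. If $w>1$, one simply observes from the block list that every $d_i$ lies in $\{w/2,\,w,\,2w\}$; hence all ratios $d_i/d_j$ are powers of $2$, the minimum $d_i$ divides every $d_j$, and coprimality forces $\min d_i=1$. Since $w\le 2\min d_i$, this gives $w=2$. That is the entire proof---three lines with no case analysis on block regularity. Your second paragraph, where you tie yourself in knots over whether $w=1$ can coexist with an irregular block, is unnecessary once you argue this way: the dichotomy is on the value of $w$, not on the presence of irregular blocks.
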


\begin{proof}
If $w=1$ then we are in assumptions of previous section, so $B$ admits a local unfolding. Now suppose that $w>1$. Looking at the list of blocks (see Table~\ref{newblocks}), we see that $w$ is at most two times larger than the minimal value of $d_i$. Moreover, all $d_i$ are powers of two. In view of GCD equal to one, this implies that the minimal value is also one, so $w=2$.

\end{proof}

Now we construct unfoldings for all matrices representing irregular blocks. The proof of the following lemma is straightforward.

\begin{lemma}
\label{u-ir}
The third column of Table~\ref{irr} contains all possible matrices representing irregular blocks. Matrices in the fourth column are unfoldings of ones on the left.

\end{lemma}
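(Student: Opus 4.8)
The plan is to reduce the statement to a finite, explicit verification, using the structural facts already established. To see that the third column of Table~\ref{irr} is exhaustive, I would first invoke Lemma~\ref{weight}: an irregular block — one admitting no local unfolding in the sense of Definition~\ref{def-simpl} — is precisely a block of weight $w=2$, that is, one represented by a skew-symmetrizable matrix $B'$ in which the skew-symmetrizing number $d_i$ equals $2$ for every outlet $x_i$. Going block by block through Table~\ref{newblocks}, each underlying diagram has at most five vertices, and by Lemma~\ref{find} each is represented by only finitely many skew-symmetrizable matrices; hence there are only finitely many candidates for $B'$, and one simply records those possessing an outlet with $d_i=2$. Up to simultaneous permutation of rows and columns this list is exactly the third column of Table~\ref{irr}.

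Next, for each such $B'$ I would take the matrix $C'$ displayed in the fourth column. Skew-symmetry of $C'$ is read off its shape, and, decomposing $C'$ into its $E_i\times E_j$ blocks (with $|E_i|=d_i\in\{1,2\}$) and summing columns, one checks that the pair $(C',B')$ satisfies conditions (1) and (2) of the definition of unfolding: the column sums of the $E_i\times E_j$ block equal $b'_{ij}$, and that block is sign-coherent with $b'_{ij}$. It remains to confirm that conditions (1) and (2) persist under every composite mutation $\h\mu_{k_1}\cdots\h\mu_{k_m}$. Since a single block is trivially s-decomposable, Corollary~\ref{mut-fin} gives that the mutation class of $B'$ is finite, and a direct check (via Theorem~\ref{less3}, say) shows that each $C'$ appearing in Table~\ref{irr} is itself mutation-finite, so its composite-mutation orbit is finite too. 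It therefore suffices to verify the one-step implication ``if $(C,B)$ satisfies (1) and (2) then so does $(\h\mu_i C,\mu_i B)$'' over the finitely many pairs $(B,C)$ occurring in these orbits and all directions $i$ — a finite computation, carried out exactly as for the regular blocks in Table~\ref{reg}.

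The only points that will need care are the bookkeeping in the first step — making sure that no weight-$2$ matrix has been missed — and the finiteness argument that makes the last step a finite check rather than an induction over arbitrarily long mutation sequences. Once these are in place, the remaining content is a one-line verification for each row of Table~\ref{irr}, which is why the proof deserves to be called straightforward.
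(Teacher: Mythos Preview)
Your proposal is correct and matches the paper's approach: the paper simply declares the proof ``straightforward'' with no further argument, and what you have written is precisely an articulation of what that straightforward verification consists of --- enumerating the finitely many skew-symmetrizable matrices for each block diagram, singling out those whose outlets carry $d_i=2$, and then checking conditions~(1) and~(2) over the finite composite-mutation orbit. There is nothing to add.
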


\begin{table}
\caption{Unfoldings of irregular blocks}
\label{irr}
\begin{tabular}{|c|c|c|c|c|}
\hline
\begin{tabular}{c}
Block\\
number
\end{tabular}&Diagram&Matrix&Unfolding&\begin{tabular}{c}
Diagram\\
unfolding
\end{tabular}\\
\hline
$\mr{\t{III}a}$&
\psfrag{2-}{\tiny $2$}\psfrag{3at}{}\raisebox{-5mm}{\epsfig{file=diagrams_pic/block3at.eps,width=0.09\linewidth}}&
{\small $\left(\begin{smallmatrix}
0&-2\\
1&0\\
\end{smallmatrix}\right)$}&
{\small $\left(\begin{smallmatrix}
0&0&-1\\
0&0&-1\\
1&1&0\\
\end{smallmatrix}\right)$}&
\psfrag{2-}{\tiny $2$}\raisebox{-3mm}[7mm][4mm]{\epsfig{file=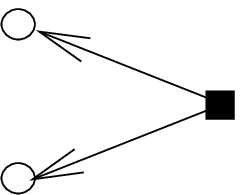,width=0.075\linewidth}}\\
\hline
$\mr{\t{III}b}$&
\psfrag{2-}{\tiny $2$}\psfrag{3bt}{}\raisebox{-5mm}{\epsfig{file=diagrams_pic/block3bt.eps,width=0.09\linewidth}}&
{\small $\left(\begin{smallmatrix}
0&-1\\
2&0\\
\end{smallmatrix}\right)$}&
{\small $\left(\begin{smallmatrix}
0&-1&-1\\
1&0&0\\
1&0&0\\
\end{smallmatrix}\right)$}&
\psfrag{2-}{\tiny $2$}\raisebox{-3mm}[7.5mm][4mm]{\epsfig{file=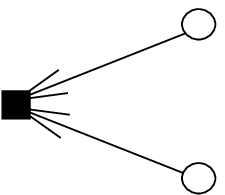,width=0.075\linewidth}}
\\
\hline
$\mr{\t{IV}}$&
\psfrag{2-}{\tiny $2$}\psfrag{4t}{}\raisebox{-7.5mm}{\epsfig{file=diagrams_pic/block4t.eps,width=0.09\linewidth}}&
{\small $\left(\begin{smallmatrix}
0&1&-2\\
-1&0&2\\
1&-1&0\\
\end{smallmatrix}\right)$}&
{\small $\left(\begin{smallmatrix}
0&0&1&0&-1\\
0&0&0&1&-1\\
-1&0&0&0&1\\
0&-1&0&0&1\\
1&1&-1&-1&0\\
\end{smallmatrix}\right)$}&
\psfrag{2-}{\tiny $2$}\raisebox{-7mm}[9.5mm][8.2mm]{\epsfig{file=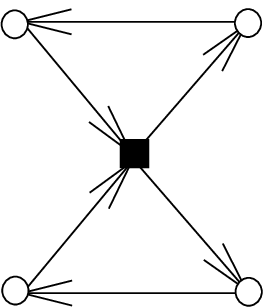,width=0.09\linewidth}}
\\
\hline
$\mr{\t{V}_1}$&
\psfrag{2-}{\tiny $2$}\psfrag{51t}{}\raisebox{-11mm}{\epsfig{file=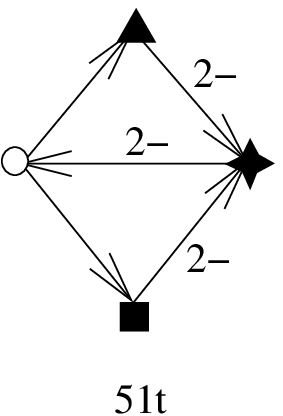,width=0.09\linewidth}}&
{\small $\left(\begin{smallmatrix}
0&1&-2&1\\
-1&0&2&0\\
1&-1&0&-1\\
-1&0&2&0\\
\end{smallmatrix}\right)$}&
{\small $\left(\begin{smallmatrix}
0&0&1&0&-1&1&0\\
0&0&0&1&-1&0&1\\
-1&0&0&0&1&0&0\\
0&-1&0&0&1&0&0\\
1&1&-1&-1&0&-1&-1\\
-1&0&0&0&1&0&0\\
0&-1&0&0&1&0&0
\end{smallmatrix}\right)$}&
\psfrag{2-}{\tiny $2$}\raisebox{-6.9mm}[11.2mm][7mm]{\epsfig{file=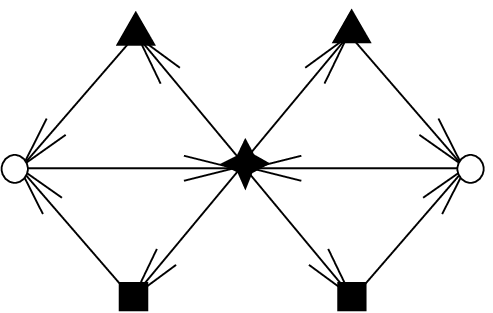,width=0.17\linewidth}}
\\
\hline
$\mr{\t{V}_2}$&
\psfrag{2-}{\tiny $2$}\psfrag{52t}{}\raisebox{-11mm}{\epsfig{file=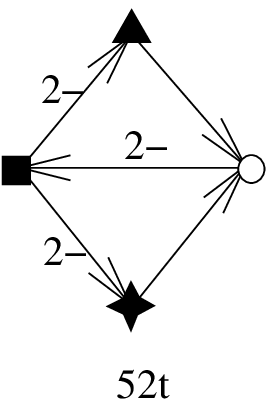,width=0.09\linewidth}}&
{\small $\left(\begin{smallmatrix}
0&1&-1&1\\
-2&0&1&0\\
2&-1&0&-1\\
-2&0&1&0\\
\end{smallmatrix}\right)$}&
{\small $\left(\begin{smallmatrix}
0&1&1&-1&-1&1&1\\
-1&0&0&1&0&0&0\\
-1&0&0&0&1&0&0\\
1&-1&0&0&0&-1&0\\
1&0&-1&0&0&0&-1\\
-1&0&0&1&0&0&0\\
-1&0&0&0&1&0&0\\
\end{smallmatrix}\right)$}&
\psfrag{2-}{\tiny $2$}\raisebox{-7.5mm}[11.2mm][7mm]{\epsfig{file=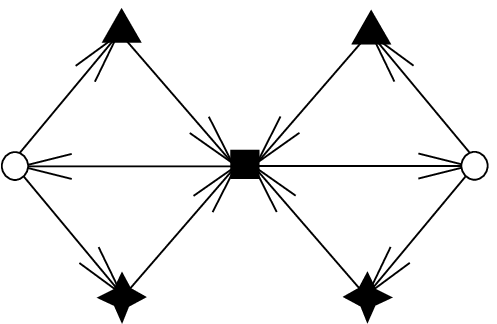,width=0.17\linewidth}}
\\
\hline
$\mr{\t{V}_{12}}$&
\psfrag{2-}{\tiny $2$}\psfrag{4}{\tiny $4$}\psfrag{512t}{}\raisebox{-8.5mm}{\epsfig{file=diagrams_pic/block512t.eps,width=0.09\linewidth}}&
{\small $\left(\begin{smallmatrix}
0&1&-1\\
-2&0&1\\
4&-2&0\\
\end{smallmatrix}\right)$}&
{\small $\left(\begin{smallmatrix}
0&1&1&-1&-1&-1&-1\\
-1&0&0&1&0&1&0\\
-1&0&0&0&1&0&1\\
1&-1&0&0&0&0&0\\
1&0&-1&0&0&0&0\\
1&-1&0&0&0&0&0\\
1&0&-1&0&0&0&0\\
\end{smallmatrix}\right)$}&
\psfrag{2-}{\tiny $2$}\raisebox{-7.2mm}[11.2mm][9.5mm]{\epsfig{file=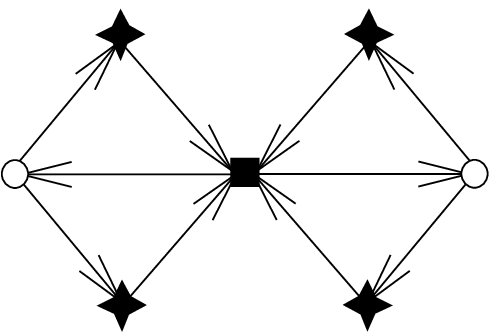,width=0.17\linewidth}}\\
\hline
$\mr{\t{V}_{12}}$&
\psfrag{2-}{\tiny $2$}\psfrag{4}{\tiny $4$}\psfrag{512t}{}\raisebox{-7.5mm}{\epsfig{file=diagrams_pic/block512t.eps,width=0.09\linewidth}}&
{\small $\left(\begin{smallmatrix}
0&2&-4\\
-1&0&2\\
1&-1&0\\
\end{smallmatrix}\right)$}&
{\small $\left(\begin{smallmatrix}
0&0&0&0&1&0&-1\\
0&0&0&0&0&1&-1\\
0&0&0&0&1&0&-1\\
0&0&0&0&0&1&-1\\
-1&0&-1&0&0&0&1\\
0&-1&0&-1&0&0&1\\
1&1&1&1&-1&-1&0\\
\end{smallmatrix}\right)$}&
\psfrag{2-}{\tiny $2$}\raisebox{-6.9mm}[11.2mm][9.5mm]{\epsfig{file=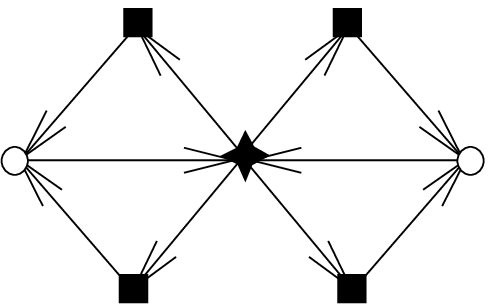,width=0.17\linewidth}}\\
\hline
$\mr{\t{V}_{12}}$&
\psfrag{2-}{\tiny $2$}\psfrag{4}{\tiny $4$}\psfrag{512t}{}\raisebox{-7.5mm}{\epsfig{file=diagrams_pic/block512t.eps,width=0.09\linewidth}}&
{\small $\left(\begin{smallmatrix}
0&1&-2\\
-2&0&2\\
2&-1&0\\
\end{smallmatrix}\right)$}&
{\small $\left(\begin{smallmatrix}
0&1&1&-2\\
-1&0&0&1\\
-1&0&0&1\\
2&-1&-1&0\\
\end{smallmatrix}\right)$}&
\psfrag{4}{\tiny $4$}\raisebox{-6.9mm}[11.2mm][9.5mm]{\epsfig{file=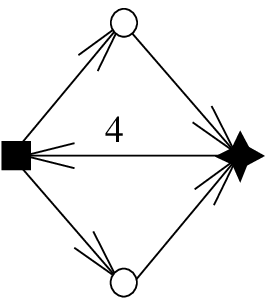,width=0.1\linewidth}}\\
\hline
\end{tabular}
\end{table}

\smallskip

From now on we can assume $w=2$. We will use matrices from Table~\ref{irr} together with local unfoldings (see Table~\ref{reg}) as a construction set for the following procedure. In the case the matrix has s-decomposable diagram containing only regular blocks and irregular blocks of types $\mr{\t{V}_{12}}$ (listed in the last row of Table~\ref{irr}) and $\mr{\t{III}}$, the procedure gives rise to an unfolding. We will generalize this construction and prove the existence of unfoldings in~\cite{orbifolds} using a geometric description in terms of triangulations of underlying orbifolds. 

We describe the procedure in terms of diagrams, then it can be easily translated to the language of matrices. 

First, for each connected component $S'$ of regular part we take its local unfolding $\h{S'}$. Then we take two copies of $\h{S'}$ and paint one of them in black, and the other in red. Now, looking at the list of unfoldings of irregular blocks (Table~\ref{irr}) one can note the following two properties: in all but one block there is exactly one vertex $x_i$ with $d_i=1$ (the exception is the last one, where unfolding contains two such vertices $x_i$ and $y_i$), and the unfolding consists of two similar blocks (of type ${\mr{I}}$, ${\mr{II}}$, or ${\mr{IV}}$) glued along $x_i$ (or $x_i$ and $y_i$). In other words, blocks contained in the unfolding of irregular part form pairs.

Therefore, we can do the following. For each irregular block $S''$ we take the corresponding unfolding $\h{S}''$ from Table~\ref{irr}, and paint one half of it (which is a skew-symmetric block) in black, and the other in red (we are interested in the color of outlets only, so the vertices $x_i$ and $y_i$ may remain uncolored). Now for every irregular block $S''$ and every outlet $x\in S''$, glue the unfolding $\h{S}''$ to red copy of the regular part of $S_{\mr{dec}}$ along red copy of $\h x$, and to black copy of the regular part of $S_{\mr{dec}}$ along black copy of $\h x$. In this way we get a diagram $\h S$. Performing the same operations with corresponding matrices, we obtain a matrix $C$.

\begin{example}
We show an example of a non-local unfolding provided by the construction above.
Consider a diagram $S$ shown on Fig.~\ref{ex2}, left, with  block decomposition shown at the center of the figure.

\begin{figure}[!hb]
\begin{tabular}{cp{0.8cm}cp{0.8cm}c}
\psfrag{2-}{\tiny $2$}
\raisebox{1.2mm}{\epsfig{file=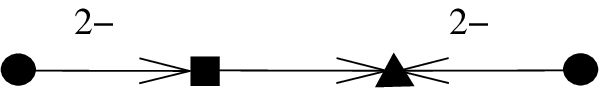,width=0.27\linewidth}}
&&
\psfrag{2-}{\tiny $2$}
\raisebox{1.2mm}{\epsfig{file=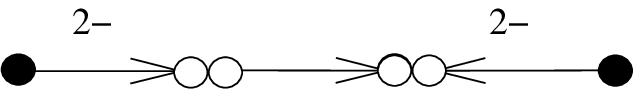,width=0.275\linewidth}}
&&
\psfrag{2-}{\tiny $2$}
\raisebox{-3.4mm}{\epsfig{file=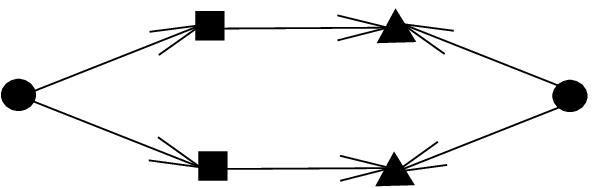,width=0.24\linewidth}}\\
$S$\phantom{SSSSSSSSS}&&$S_{\mr{dec}}$\phantom{SSSSSSSSS}&&$\h S$\phantom{SSSSSSSSSSSSSS}
\end{tabular}
\caption{Diagram $S$ with block decomposition $S_{\mr{dec}}$ and non-local unfolding $\h S$}
\label{ex2}
\end{figure}

Let both blocks $S_1$ and $S_3$ of type ${\rm{\t{III}}}$ be irregular. Then the corresponding matrices are
$$
B_1=\begin{pmatrix}
0&1\\
-2&0
\end{pmatrix}
\qquad{{\mathrm{and}}}\qquad
B_3=\begin{pmatrix}
0&-2\\
1&0
\end{pmatrix}
$$
The regular part $B_2$ with diagram $S_2$ consists of skew-symmetric matrix
$$
B_2=\begin{pmatrix}
0&1\\
-1&0
\end{pmatrix}
$$
The matrix $B$ representing $S$ will look like
$$
B=\begin{pmatrix}
0&1&0&0\\
-2&0&1&0\\
0&-1&0&-2\\
0&0&1&0
\end{pmatrix}
$$
Unfoldings of $B_1$ and $B_3$ are
$$
C_1=\begin{pmatrix}
0&1&1\\
-1&0&0\\
-1&0&0
\end{pmatrix}
\qquad{{\mathrm{and}}}\qquad
C_3=\begin{pmatrix}
0&0&-1\\
0&0&-1\\
1&1&0
\end{pmatrix}
$$
Gluing two copies of regular part with $C_1$ and $C_3$, we obtain the matrix
$$
C=\begin{pmatrix}
0&1&1&0&0&0\\
-1&0&0&1&0&0\\
-1&0&0&0&1&0\\
0&-1&0&0&0&-1\\
0&0&-1&0&0&-1\\
0&0&0&1&1&0\\
\end{pmatrix}
$$
The diagram $\h S$ of $C$ is shown on Fig.~\ref{ex2} on the right. A direct verification by checking all mutations in the complete mutation class shows that $C$ is an unfolding of $B$. 
\end{example}

\subsection{Matrices with non-decomposable diagrams}

According to Theorem~\ref{all-s}, the number of mutation-finite matrices with non-decomposable diagrams is finite, and the number of mutation classes is small. In Table~\ref{non} we present unfoldings for all matrices with non-decomposable mutation-finite diagrams. The straightforward proof makes use of Keller's {\rm {\tt Java}} applet~\cite{K} and elementary {\tt C++} code~\cite{progr}.

\begin{table}
\vbox to\textheight{\vss
\hbox to\textwidth{\hss
\begin{turn}{90}
\begin{minipage}{\textheight}\centering
\caption{Unfoldings of matrices with non-decomposable mutation-finite diagrams}
\label{non}
\begin{tabular}{|c|c|c|c|c|c|}
\hline
\multicolumn{2}{|c|}{\small Diagram}&{\small Matrix}&{\small Unfolding}&{\small Diagram unfolding}&
\begin{tabular}{c}
{\small Mutation class}\\ {\small of the unfolding}
\end{tabular}
\\
\hline
$\t G_2$&\psfrag{3}{\tiny $3$}\raisebox{0.5mm}{\epsfig{file=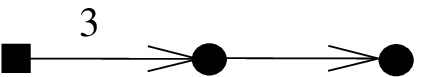,width=0.08\linewidth}}&
{\small $\left(\begin{smallmatrix}
0&3&0\\
-1&0&1\\
0&-1&0
\end{smallmatrix}\right)$}&
{\small $\left(\begin{smallmatrix}
\\
0&0&0&1&0\\
0&0&0&1&0\\
0&0&0&1&0\\
-1&-1&-1&0&1\\
0&0&0&-1&0\\
\end{smallmatrix}\right)$}&\raisebox{-3.5mm}[9mm][7mm]{\epsfig{file=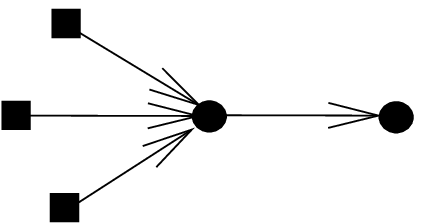,width=0.08\linewidth}}&
\begin{tabular}{c}{\small block-}\\ {\small decomposable}
\end{tabular}\\
\hline
$\t G_2$&\psfrag{3}{\tiny $3$}\raisebox{0.5mm}{\epsfig{file=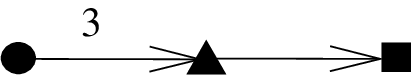,width=0.08\linewidth}}&
{\small $\left(\begin{smallmatrix}
\\
0&1&0\\
-3&0&1\\
0&-1&0
\end{smallmatrix}\right)$}&
{\small $\left(\begin{smallmatrix}
\\
0&1&1&1&0&0&0\\
-1&0&0&0&1&0&0\\
-1&0&0&0&0&1&0\\
-1&0&0&0&0&0&1\\
0&-1&0&0&0&0&0\\
0&0&-1&0&0&0&0\\
0&0&0&-1&0&0&0\\
\end{smallmatrix}\right)$}&\raisebox{-3.5mm}[0.056\linewidth][0.047\linewidth]{\epsfig{file=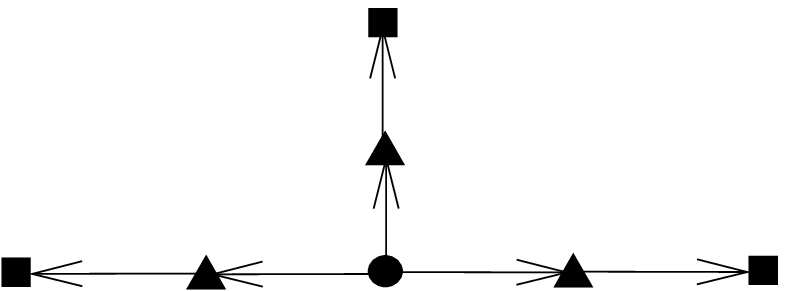,width=0.16\linewidth}}
&$\t E_6$\\
\hline
$F_4$&\psfrag{2}{\tiny $2$}\raisebox{0.5mm}{\epsfig{file=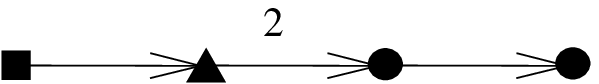,width=0.12\linewidth}}&
{\small $\left(\begin{smallmatrix}
\\
0&1&0&0\\
-1&0&2&0\\
0&-1&0&1\\
0&0&-1&0\\
\end{smallmatrix}\right)$}&
{\small $\left(\begin{smallmatrix}
\\
0&0&1&0&0&0\\
0&0&0&1&0&0\\
-1&0&0&0&1&0\\
0&-1&0&0&1&0\\
0&0&-1&-1&0&1\\
0&0&0&0&-1&0\\
\end{smallmatrix}\right)$}&\raisebox{-1.5mm}[0.051\linewidth][0.042\linewidth]{\epsfig{file=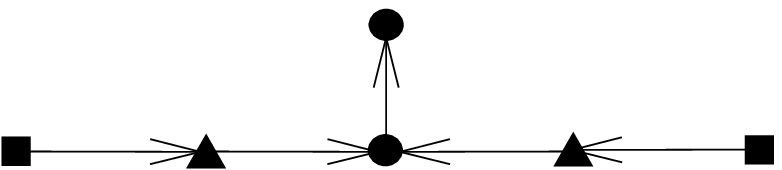,width=0.16\linewidth}}
&$E_6$\\
\hline
\begin{tabular}{c}$G_2^{(*,+)}$\\  \\ {\scriptsize \!\!($G_2^{(1,3)}$ or $G_2^{(3,1)}$)\!\!}    \end{tabular}
&\psfrag{3}{\tiny $3$}\psfrag{4}{\tiny $4$}\psfrag{2}{\tiny $2$}\raisebox{-4mm}{\epsfig{file=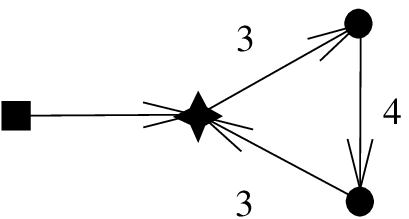,width=0.095\linewidth}}&
{\small $\left(\begin{smallmatrix}
\\
0&1&0&0\\
-1&0&3&-3\\
0&-1&0&2\\
0&1&-2&0\\
\end{smallmatrix}\right)$}&
{\small $\left(\begin{smallmatrix}
\\
0&0&0&1&0&0&0&0\\
0&0&0&0&1&0&0&0\\
0&0&0&0&0&1&0&0\\
-1&0&0&0&0&0&1&-1\\
0&-1&0&0&0&0&1&-1\\
0&0&-1&0&0&0&1&-1\\
0&0&0&-1&-1&-1&0&2\\
0&0&0&1&1&1&-2&0\\
\end{smallmatrix}\right)$}
&\psfrag{4}{\tiny $4$}\raisebox{-4.5mm}[0.063\linewidth][0.055\linewidth]{\epsfig{file=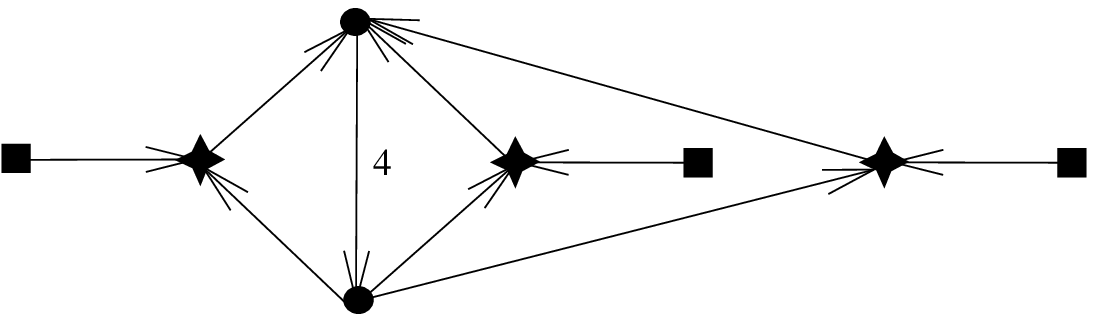,width=0.18\linewidth}}
&$E_6^{(1,1)}$\\
\hline
\begin{tabular}{c}$G_2^{(*,*)}$\\  \\ {\scriptsize \!\!($G_2^{(3,3)}$)\!\!}    \end{tabular}
&\psfrag{3}{\tiny $3$}\psfrag{4}{\tiny $4$}\raisebox{-5mm}{\epsfig{file=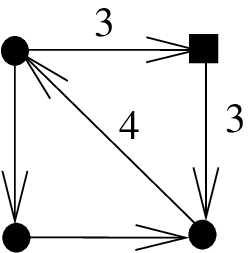,width=0.06\linewidth}}&
{\small $\left(\begin{smallmatrix}
\\
0&-1&2&-1\\
1&0&-1&0\\
-2&1&0&1\\
3&0&-3&0\\
\end{smallmatrix}\right)$}&
{\small $\left(\begin{smallmatrix}
\\
0&-1&2&-1&-1&-1\\
1&0&-1&0&0&0\\
-2&1&0&1&1&1\\
1&0&-1&0&0&0\\
1&0&-1&0&0&0\\
1&0&-1&0&0&0\\
\end{smallmatrix}\right)$}
&\psfrag{4}{\tiny $4$}\raisebox{-9mm}[0.063\linewidth][0.055\linewidth]{\epsfig{file=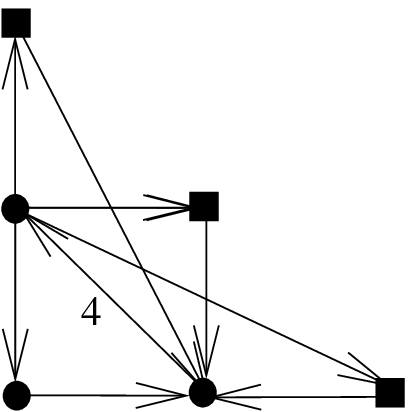,width=0.1\linewidth}}
&\begin{tabular}{c}
{\small block-}\\ {\small decomposable}
\end{tabular}\\
\hline
\end{tabular}
\end{minipage}
\end{turn}
\hss}\vss}
\end{table}
\addtocounter{table}{-1}
\begin{table}
\vbox to\textheight{\vss
\hbox to\textwidth{\hss
\begin{turn}{90}
\begin{minipage}{\textheight}\centering
\caption{Cont.}
\begin{tabular}{|c|c|c|c|c|c|}
\hline
\multicolumn{2}{|c|}{\small Diagram}&{\small Matrix}&{\small Unfolding}&{\small Diagram unfolding}&
{\small \begin{tabular}{c}
Mutation \\ class of the \\ unfolding
\end{tabular}}
\\
\hline
\begin{tabular}{c}$G_2^{(*,*)}$\\  \\ {\scriptsize \!\!($G_2^{(1,1)}$)\!\!}    \end{tabular}
&\psfrag{4}{\tiny $4$}\psfrag{3}{\tiny $3$}\raisebox{-5mm}{\epsfig{file=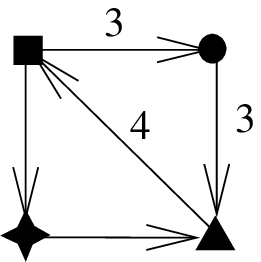,width=0.065\linewidth}}&
{\small $\left(\begin{smallmatrix}
\\
0&-1&2&-3\\
1&0&-1&0\\
-2&1&0&3\\
1&0&-1&0\\
\end{smallmatrix}\right)$}&
{\small $\left(\begin{smallmatrix}
\\
0&0&0&-1&0&0&1&0&1&-1\\
0&0&0&0&-1&0&1&1&0&-1\\
0&0&0&0&0&-1&0&1&1&-1\\
1&0&0&0&0&0&-1&0&0&0\\
0&1&0&0&0&0&0&-1&0&0\\
0&0&1&0&0&0&0&0&-1&0\\
-1&-1&0&1&0&0&0&0&0&1\\
0&-1&-1&0&1&0&0&0&0&1\\
-1&0&-1&0&0&1&0&0&0&1\\
1&1&1&0&0&0&-1&-1&-1&0\\
\end{smallmatrix}\right)$}&\raisebox{-11.5mm}[0.071\linewidth][0.066\linewidth]{\epsfig{file=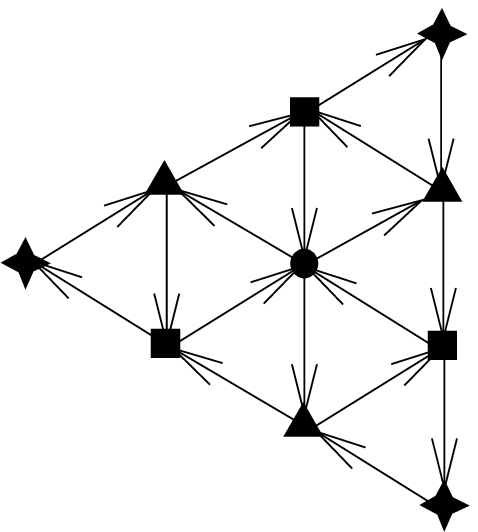,width=0.11\linewidth}}
&$E_8^{(1,1)}$\\
\hline
$\t F_4$&\psfrag{2-}{\tiny $2$}\raisebox{0mm}{\epsfig{file=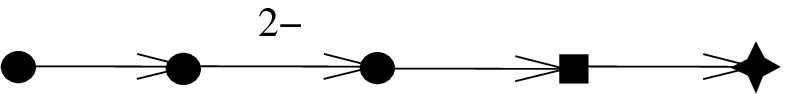,width=0.14\linewidth}}&
{\small $\left(\begin{smallmatrix}
\\
0&1&0&0&0\\
-1&0&2&0&0\\
0&-1&0&1&0\\
0&0&-1&0&1\\
0&0&0&-1&0\\
\end{smallmatrix}\right)$}&
{\small $\left(\begin{smallmatrix}
\\
0&0&1&0&0&0&0\\
0&0&0&1&0&0&0\\
-1&0&0&0&1&0&0\\
0&-1&0&0&1&0&0\\
0&0&-1&-1&0&1&0\\
0&0&0&0&-1&0&1\\
0&0&0&0&0&-1&0\\
\end{smallmatrix}\right)$}&\raisebox{-7mm}[0.055\linewidth][0.047\linewidth]{\epsfig{file=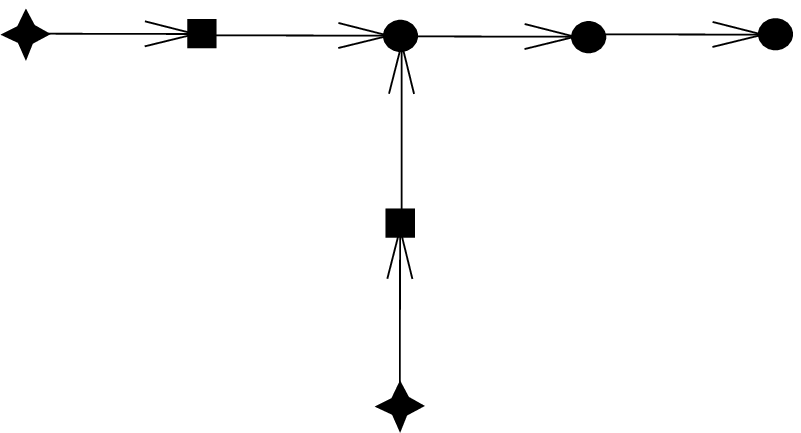,width=0.13\linewidth}}
&$\t E_6$\\
\hline
$\t F_4$&\psfrag{2-}{\tiny $2$}\raisebox{-0mm}{\epsfig{file=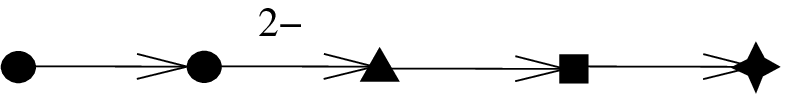,width=0.13\linewidth}}&
{\small $\left(\begin{smallmatrix}
\\
0&1&0&0&0\\
-1&0&1&0&0\\
0&-2&0&1&0\\
0&0&-1&0&1\\
0&0&0&-1&0\\
\end{smallmatrix}\right)$}&
{\small $\left(\begin{smallmatrix}
\\
0&1&0&0&0&0&0&0\\
-1&0&1&1&0&0&0&0\\
0&-1&0&0&1&0&0&0\\
0&-1&0&0&0&1&0&0\\
0&0&-1&0&0&0&1&0\\
0&0&0&-1&0&0&0&1\\
0&0&0&0&-1&0&0&0\\
0&0&0&0&0&-1&0&0\\
\end{smallmatrix}\right)$}&\raisebox{-3.5mm}[0.06\linewidth][0.052\linewidth]{\epsfig{file=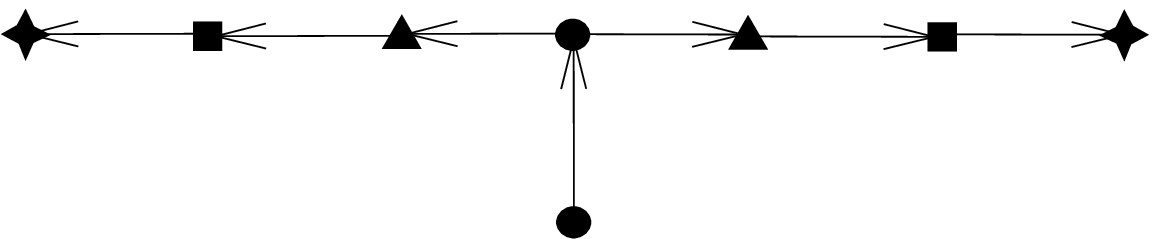,width=0.19\linewidth}}
&$\t E_7$\\
\hline
\end{tabular}
\end{minipage}
\end{turn}
\hss}\vss}
\end{table}
\addtocounter{table}{-1}
\begin{table}[p]
\vbox to\textheight{\vss
\hbox to\textwidth{\hss
\begin{turn}{90}
\begin{minipage}{\textheight}\centering
\caption{Cont.}
\begin{tabular}{|c|c|c|c|c|c|}
\hline
\multicolumn{2}{|c|}{\scriptsize Diagram}&{\scriptsize Matrix}&{\scriptsize Unfolding}&{\scriptsize Diagram unfolding}&
{\scriptsize \begin{tabular}{c}
Mutation \\ class of the\\ unfolding
\end{tabular}}\\
\hline
{\scriptsize \begin{tabular}{c}{\small $F_4^{(*,+)}$}\\  \\ ($F_4^{(1,2)}$\\ or\\ $F_4^{(2,1)}$)    \end{tabular}}
&\psfrag{2}{\tiny $2$}\psfrag{4}{\tiny $4$}\raisebox{-3.8mm}{\epsfig{file=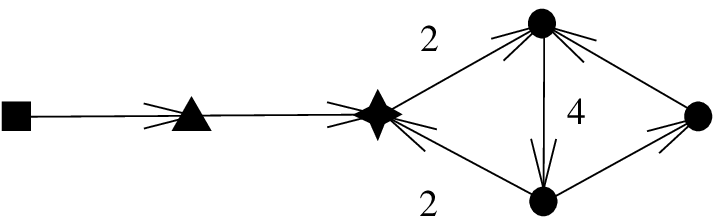,width=0.13\linewidth}}&
{\small $\left(\begin{smallmatrix}
\\
0&1&0&0&0&0\\
-1&0&1&0&0&0\\
0&-1&0&2&-2&0\\
0&0&-1&0&2&-1\\
0&0&1&-2&0&1\\
0&0&0&1&-1&0\\
\end{smallmatrix}\right)$}&
{\small $\left(\begin{smallmatrix}
\\
0&0&1&0&0&0&0&0&0\\
0&0&0&1&0&0&0&0&0\\
-1&0&0&0&1&0&0&0&0\\
0&-1&0&0&0&1&0&0&0\\
0&0&-1&0&0&0&1&-1&0\\
0&0&0&-1&0&0&1&-1&0\\
0&0&0&0&-1&-1&0&2&-1\\
0&0&0&0&1&1&-2&0&1\\
0&0&0&0&0&0&1&-1&0\\
\end{smallmatrix}\right)$}
&\psfrag{4}{\tiny $4$}\raisebox{-3.8mm}[0.067\linewidth][0.059\linewidth]{\epsfig{file=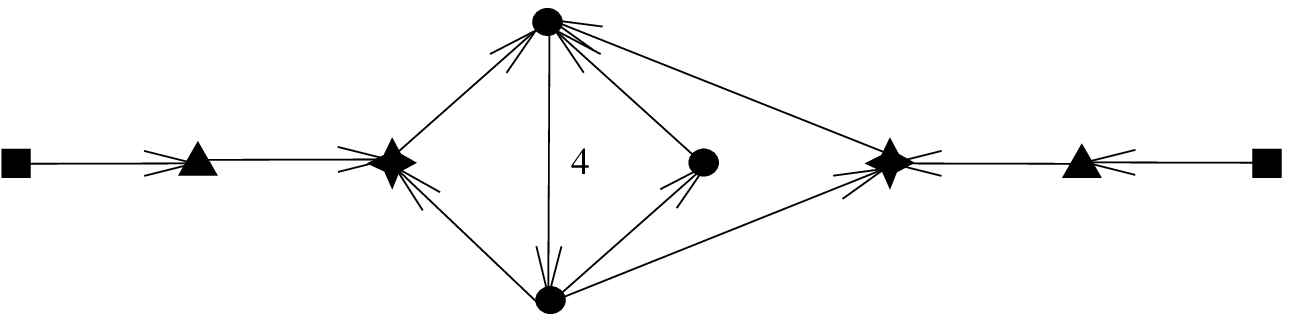,width=0.19\linewidth}}
&$E_7^{(1,1)}$\\
\hline
\begin{tabular}{c}$F_4^{(*,*)}$\\  \\ {\scriptsize \!\!($F_4^{(2,2)}$)\!\!}    \end{tabular}
&\psfrag{4}{\tiny $4$}\psfrag{2}{\tiny $2$}\raisebox{-3.1mm}{\epsfig{file=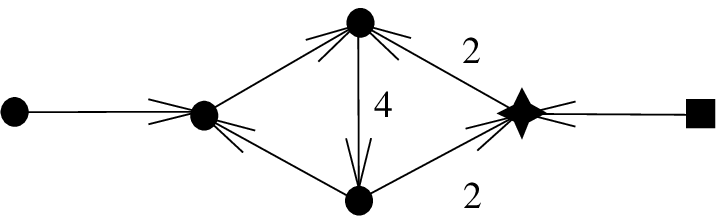,width=0.13\linewidth}}&
{\small $\left(\begin{smallmatrix}
\\
0&1&0&0&0&0\\
-1&0&1&-1&0&0\\
0&-1&0&2&-1&0\\
0&1&-2&0&1&0\\
0&0&2&-2&0&-1\\
0&0&0&0&1&0\\
\end{smallmatrix}\right)$}&
{\small $\left(\begin{smallmatrix}
\\
0&1&0&0&0&0&0&0\\
-1&0&1&-1&0&0&0&0\\
0&-1&0&2&-1&-1&0&0\\
0&1&-2&0&1&1&0&0\\
0&0&1&-1&0&0&-1&0\\
0&0&1&-1&0&0&0&-1\\
0&0&0&0&1&0&0&0\\
0&0&0&0&0&1&0&0\\
\end{smallmatrix}\right)$}
&\psfrag{4}{\tiny $4$}\raisebox{-4.3mm}[0.061\linewidth][0.053\linewidth]{\epsfig{file=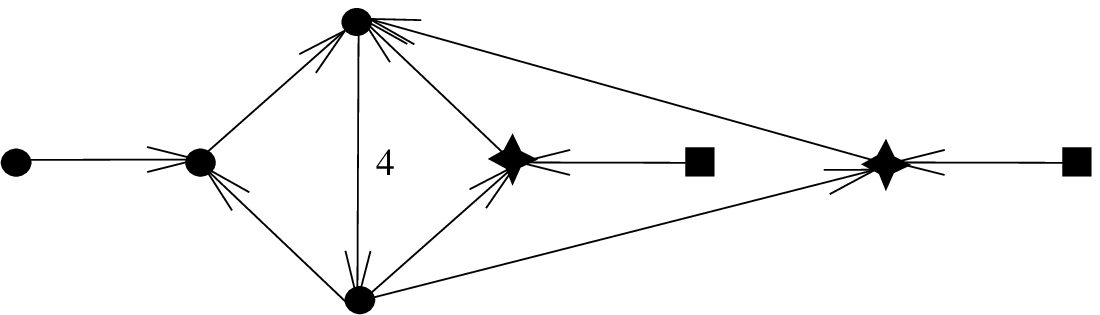,width=0.19\linewidth}}
&$E_6^{(1,1)}$\\
\hline
\begin{tabular}{c}$F_4^{(*,*)}$\\  \\ {\scriptsize \!\!($F_4^{(1,1)}$)\!\!}    \end{tabular}
&\psfrag{2}{\tiny $2$}\psfrag{4}{\tiny $4$}\raisebox{-3.5mm}{\epsfig{file=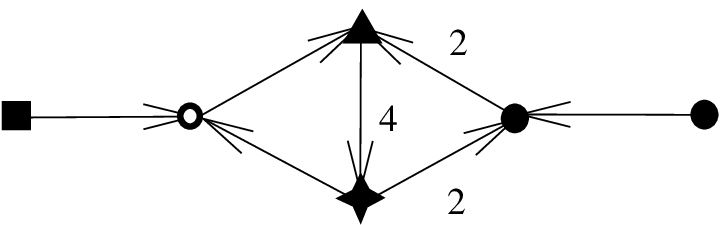,width=0.13\linewidth}}&
{\small $\left(\begin{smallmatrix}
\\
0&1&0&0&0&0\\
-1&0&1&-1&0&0\\
0&-1&0&2&-2&0\\
0&1&-2&0&2&0\\
0&0&1&-1&0&-1\\
0&0&0&0&1&0\\
\end{smallmatrix}\right)$}&
{\small $\left(\begin{smallmatrix}
\\
0&0&1&0&0&0&0&0&0&0\\
0&0&0&1&0&0&0&0&0&0\\
-1&0&0&0&1&0&-1&0&0&0\\
0&-1&0&0&0&1&0&-1&0&0\\
0&0&-1&0&0&0&1&1&-1&0\\
0&0&0&-1&0&0&1&1&-1&0\\
0&0&1&0&-1&-1&0&0&1&0\\
0&0&0&1&-1&-1&0&0&1&0\\
0&0&0&0&1&1&-1&-1&0&-1\\
0&0&0&0&0&0&0&0&1&0\\
\end{smallmatrix}\right)$}
&\psfrag{4}{\tiny $4$}\raisebox{-6.5mm}[0.071\linewidth][0.063\linewidth]{\epsfig{file=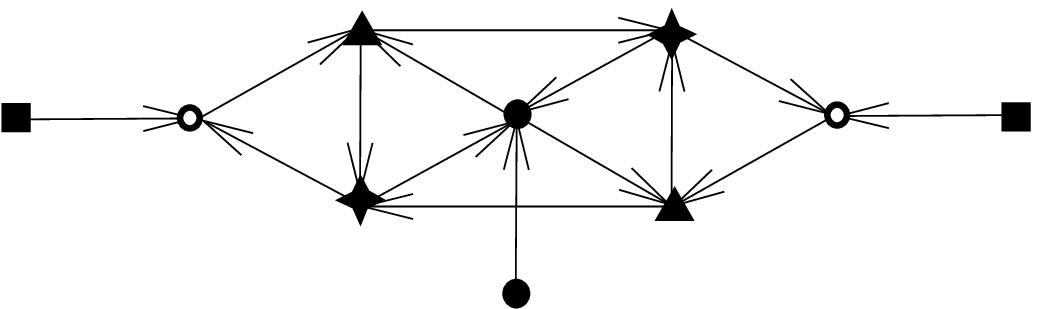,width=0.19\linewidth}}
&$E_8^{(1,1)}$\\
\hline
\end{tabular}
\end{minipage}
\end{turn}
\hss}\vss}
\end{table}

\begin{remark}
\label{proof1}
As we can see from Table~\ref{non}, all the unfoldings constructed are mutation-finite. Together with Remark~\ref{proof2}, this completes the proof of Theorem~\ref{unf}.

\end{remark}


\section{Triangulations of bordered surfaces and s-decomposable diagrams}
\label{triangle}

In this section we discuss relations between s-decomposable diagrams and triangulations of bordered surfaces.
In Section~\ref{unfoldings}, we have shown that for any s-decomposable diagram $S$ there is a matrix admitting an unfolding with a block-decomposable diagram $\h S$. Abusing notation, we will call the original matrix $B$ (resp., diagram $S$) \emph{folding} of $C$ (resp, $\h S$). Every time we use notion of folding we keep in mind a fixed unfolding. Further, if a vertex $x$ of s-decomposable diagram $S$ corresponds to vertices $x_1,\dots, x_k$ of its unfolding $\h S$ we say that $x$ is a {\it folding} of $x_1,\dots,x_k$, and mutation of $S$ in the vertex $x$ is called the {\it folding} of the composite mutation $\h\mu_x$, which is a $k$-tuple of corresponding mutations of $\h S$ in vertices $x_1,\dots,x_k$. 

As we mentioned above block-decomposable diagrams are in one-to-one correspondence with adjacency matrices of arcs of ideal tagged triangulations of bordered two-dimensional surfaces with marked points. Below we identify diagram of unfolding (with fixed block decomposition) and the corresponding triangulation. We refer to~\cite{FST} for background on tagged triangulations.

New blocks of types $\t{\mr{III}}-\t{\mr{VI}}$ admit local unfoldings into block-decomposable diagrams shown in Table~\ref{reg}. 
These unfoldings are in one-to-one correspondence with the triangulations shown on Figure~\ref{fig:triangulations}. The last one is a tagged triangulation of a sphere (the exterior is also a triangle). The others are tagged triangulations of a disk. 

\begin{table}[!h]
\begin{center}
\caption{Triangulations of blocks corresponding to local unfoldings}
\label{fig:triangulations}
\begin{tabular}{cp{0.7cm}cp{0.7cm}c}
Diagram&&Unfolding&&Triangulation\\
\hline
&&&&\\
\psfrag{u}{\tiny $u$}
\psfrag{v}{\tiny $v$}
\psfrag{2-}{\tiny $2$}
\raisebox{8mm}{\epsfig{file=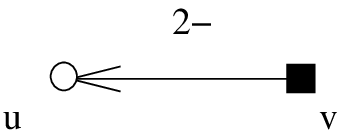,width=0.105\linewidth}}&&
\psfrag{u}{\tiny $u$}
\psfrag{v1}{\tiny $v_1$}
\psfrag{v2}{\tiny $v_2$}
\raisebox{4mm}{\epsfig{file=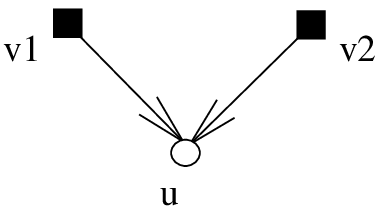,width=0.12\linewidth}}&&
\psfrag{u}{\tiny $u$}
\psfrag{v1}{\tiny $v_1$}
\psfrag{v2}{\tiny $v_2$}
\raisebox{0mm}{\epsfig{file=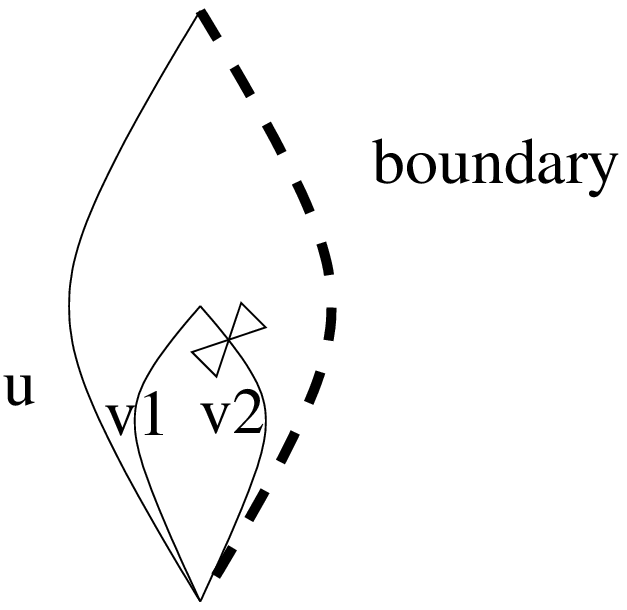,width=0.12\linewidth}}\\
\psfrag{u}{\tiny $u$}
\psfrag{v}{\tiny $v$}
\psfrag{2-}{\tiny $2$}
\raisebox{8mm}{\epsfig{file=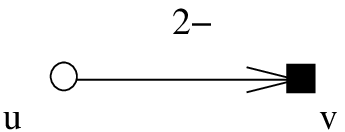,width=0.105\linewidth}}&&
\psfrag{u}{\tiny $u$}
\psfrag{v1}{\tiny $v_1$}
\psfrag{v2}{\tiny $v_2$}
\raisebox{4mm}{\epsfig{file=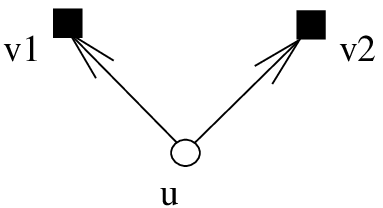,width=0.12\linewidth}}&&
\psfrag{u}{\tiny $u$}
\psfrag{v1}{\tiny $v_1$}
\psfrag{v2}{\tiny $v_2$}
\raisebox{0mm}{\epsfig{file=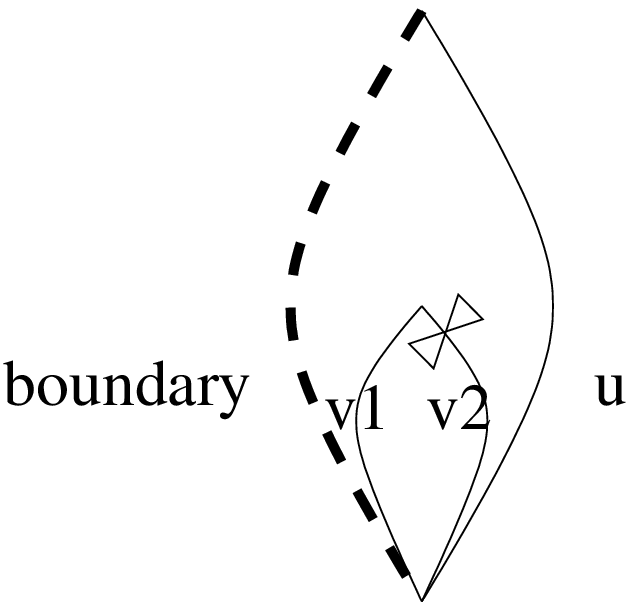,width=0.12\linewidth}}\\
\psfrag{u}{\tiny $u$}
\psfrag{v}{\tiny $v$}
\psfrag{w}{\tiny $w$}
\psfrag{2-}{\tiny $2$}
\raisebox{6mm}{\epsfig{file=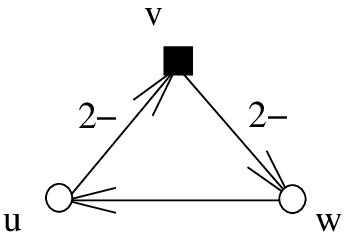,width=0.105\linewidth}}&&
\psfrag{u}{\tiny $u$}
\psfrag{w}{\tiny $w$}
\psfrag{v1}{\tiny $v_1$}
\psfrag{v2}{\tiny $v_2$}
\raisebox{0mm}{\epsfig{file=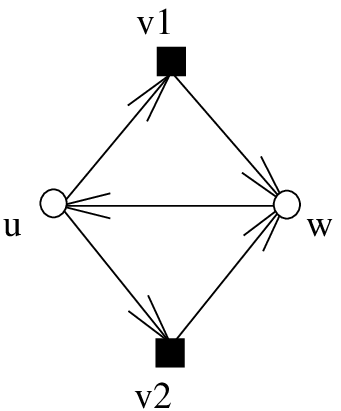,width=0.12\linewidth}}&&
\psfrag{u}{\tiny $u$}
\psfrag{w}{\tiny $w$}
\psfrag{v1}{\tiny $v_1$}
\psfrag{v2}{\tiny $v_2$}
\raisebox{0mm}{\epsfig{file=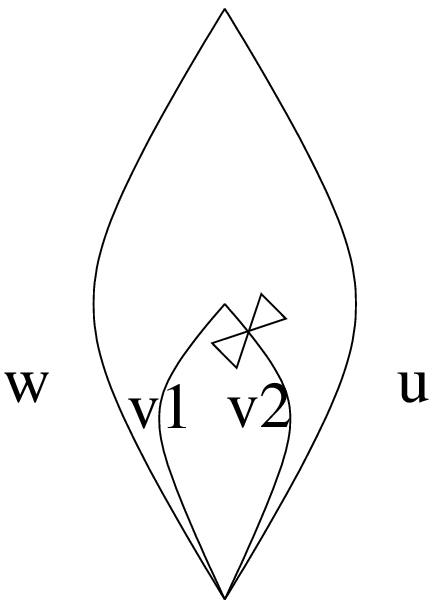,width=0.09\linewidth}}\\
&&&&\\
\psfrag{u}{\tiny $u$}
\psfrag{w}{\tiny $w$}
\psfrag{p}{\tiny $p$}
\psfrag{q}{\tiny $q$}
\psfrag{2-}{\tiny $2$}
\raisebox{0mm}{\epsfig{file=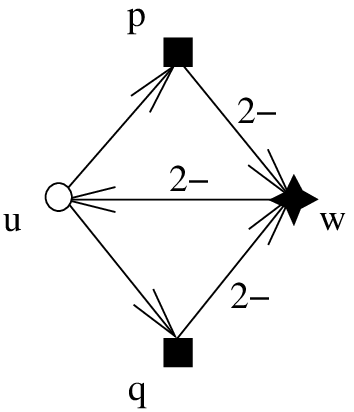,width=0.12\linewidth}}&&
\psfrag{u}{\tiny $u$}
\psfrag{r}{\tiny $q$}
\psfrag{w}{\tiny $w_2$}
\psfrag{p}{\tiny $p$}
\psfrag{q}{\tiny $w_1$}
\raisebox{4mm}{\epsfig{file=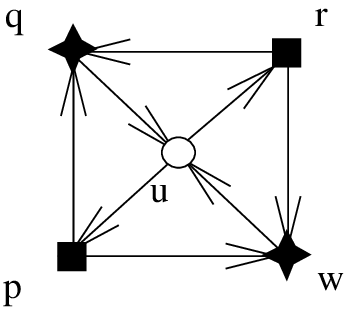,width=0.12\linewidth}}&&
\psfrag{u}{\tiny $u$}
\psfrag{w1}{\tiny $p$}
\psfrag{w2}{\tiny $q$}
\psfrag{p1}{\tiny $w_1$}
\psfrag{p2}{\tiny $w_2$}
\raisebox{0mm}{\epsfig{file=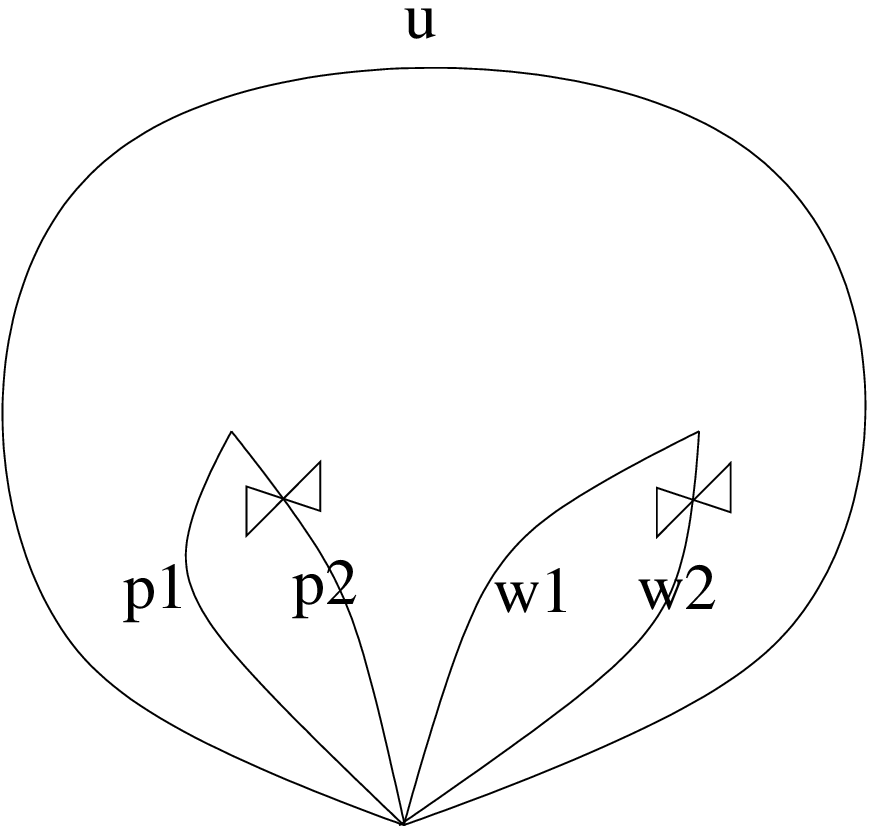,width=0.15\linewidth}}\\
&&&&\\
\psfrag{u}{\tiny $u$}
\psfrag{w}{\tiny $w$}
\psfrag{p}{\tiny $p$}
\psfrag{q}{\tiny $q$}
\psfrag{2-}{\tiny $2$}
\raisebox{0mm}{\epsfig{file=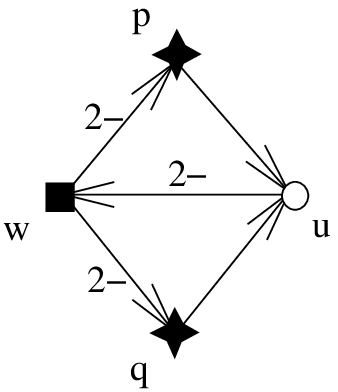,width=0.12\linewidth}}&&
\psfrag{u}{\tiny $u$}
\psfrag{r}{\tiny $w_2$}
\psfrag{w}{\tiny $q$}
\psfrag{p}{\tiny $w_1$}
\psfrag{q}{\tiny $p$}
\raisebox{4mm}{\epsfig{file=diagrams_pic/block5l.eps,width=0.12\linewidth}}&&
\psfrag{u}{\tiny $u$}
\psfrag{w1}{\tiny $w_1$}
\psfrag{w2}{\tiny $w_2$}
\psfrag{p1}{\tiny $p$}
\psfrag{p2}{\tiny $q$}
\raisebox{0mm}{\epsfig{file=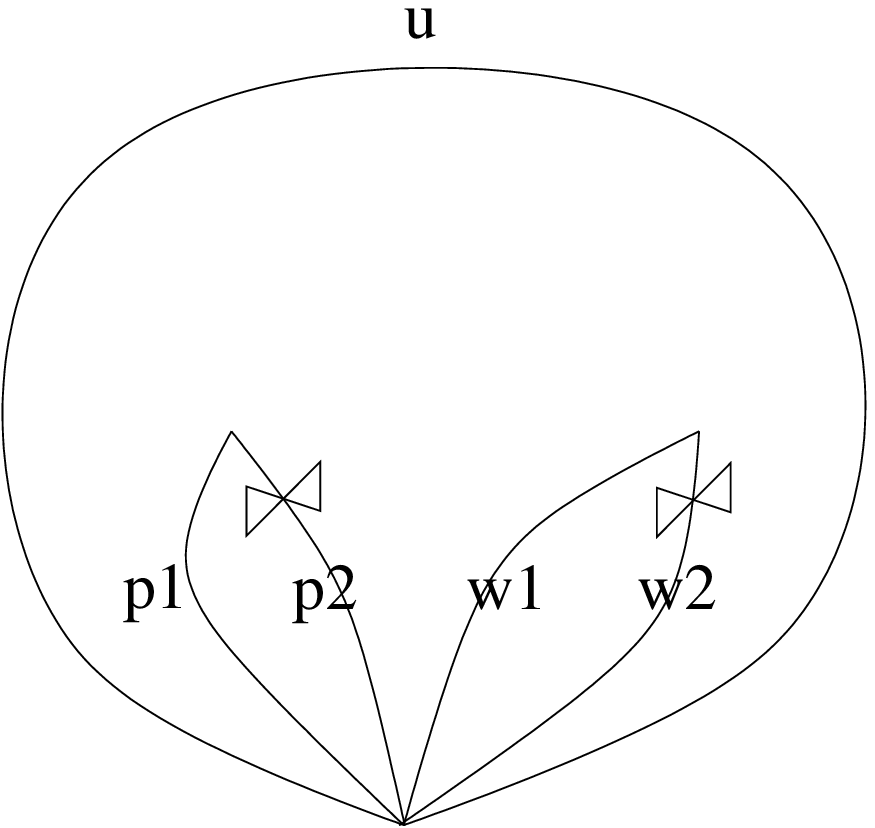,width=0.15\linewidth}}\\
&&&&\\
\psfrag{u}{\tiny $u$}
\psfrag{w}{\tiny $p$}
\psfrag{p}{\tiny $w$}
\psfrag{2-}{\tiny $2$}\psfrag{4}{\tiny $4$}
\raisebox{5mm}{\epsfig{file=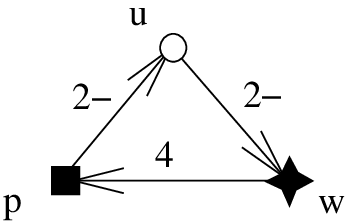,width=0.12\linewidth}}&&
\psfrag{u}{\tiny $u$}
\psfrag{r}{\tiny $w_2$}
\psfrag{w}{\tiny $p_2$}
\psfrag{p}{\tiny $w_1$}
\psfrag{q}{\tiny $p_1$}
\raisebox{3mm}{\epsfig{file=diagrams_pic/block5l.eps,width=0.12\linewidth}}&&
\psfrag{u}{\tiny $u$}
\psfrag{w1}{\tiny $w_1$}
\psfrag{w2}{\tiny $w_2$}
\psfrag{p1}{\tiny $p_1$}
\psfrag{p2}{\tiny $p_2$}
\raisebox{0mm}{\epsfig{file=diagrams_pic/s-block-VI.eps,width=0.15\linewidth}}\\
&&&&\\

\psfrag{u}{\tiny $u$}
\psfrag{r}{\tiny $r$}
\psfrag{w}{\tiny $w$}
\psfrag{p}{\tiny $p$}
\psfrag{q}{\tiny $q$}
\psfrag{2-}{\tiny $2$}
\raisebox{6mm}{\epsfig{file=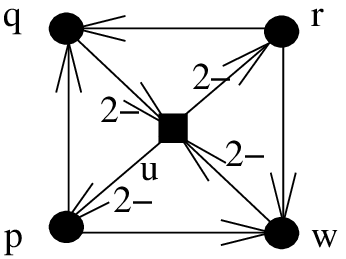,width=0.12\linewidth}}&&
\psfrag{u1}{\tiny $u_1$}
\psfrag{u2}{\tiny $u_2$}
\psfrag{r}{\tiny $r$}
\psfrag{w}{\tiny $w$}
\psfrag{p}{\tiny $p$}
\psfrag{q}{\tiny $q$}
\raisebox{0mm}{\epsfig{file=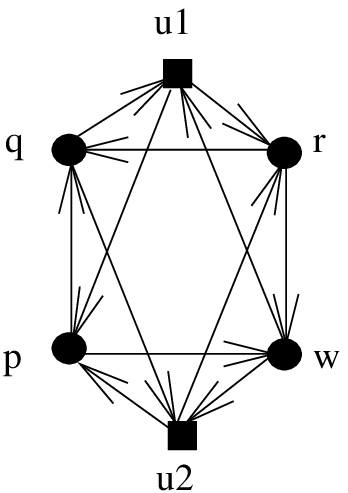,width=0.12\linewidth}}&&
\psfrag{w1}{\tiny $p$}
\psfrag{w2}{\tiny $r$}
\psfrag{u}{\tiny $u_1$}
\psfrag{v}{\tiny $u_2$}
\psfrag{p}{\tiny $w$}
\psfrag{q}{\tiny $q$}
\raisebox{0mm}{\epsfig{file=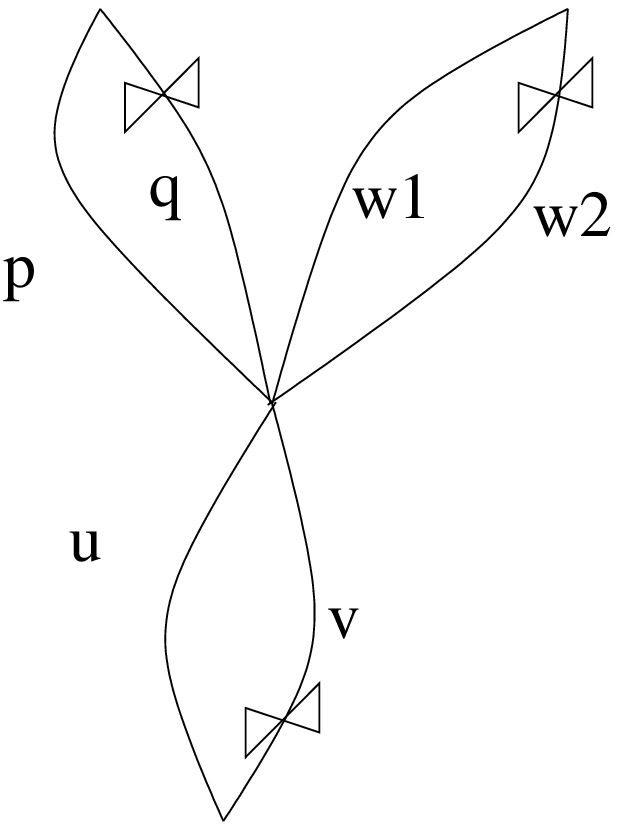,width=0.11\linewidth}}\\

\end{tabular}
\end{center}
\end{table}

\begin{remark}
\label{block6u}
The triangulation corresponding to the local unfolding of block $\t{\mr{VI}}$ has no decomposition into surfaces representing blocks of type ${\mr{I}}-{\mr{V}}$, and thus, does not correspond to any block decomposition of the unfolding diagram. Therefore, this triangulation occurs to be an exclusion from the theory derived in~\cite{FST}. 

In fact, similarly to block $\t{\mr{VI}}$, its local unfolding diagram has no outlets, so it cannot be used in any construction of further diagrams. This is the reason the authors of~\cite{FST} have made no use of that diagram as a block. For completeness of our theory, it is convenient to define the local unfolding of block $\t{\mr{VI}}$ (see Table~\ref{newblocks}~or~\ref{reg}) to be a skew-symmetric block of type ${\mr{VI}}$.  

\end{remark}

Note that any such local unfolding (except the last one) corresponds to the triangulation with two edges inside a digon (or monogon) representing the same isotopy class: one tagged plain and the other tagged notched. Let us call such pair of edges \emph{conjugate}. Conjugate pair of edges  represents two vertices of the unfolding diagram whose folding in s-decomposable diagram is exactly one vertex. Mutation of the folding vertex corresponds to the flips of the both edges from the conjugate pair. These flips do commute, and as a result we obtain again a triangulation where the corresponding edges form a conjugate pair. 

Similar to the notion of composite mutation for an unfolding diagram, we define a {\it composite flip} of a triangulation corresponding to an unfolding diagram as a collection of flips in all edges representing vertices whose folding is the same vertex. An example of a composite flip is a sequence of two flips in conjugate edges. Note that individual flips in a composite flip always mutually commute. 

Given an s-decomposable diagram $S_{\mr{dec}}$ with fixed block decomposition (different from block $\t{\mr{VI}}$), the considerations above allow us to construct a unique tagged triangulation of a marked bordered surface with chosen tuple of conjugate pairs. This surface (with triangulation) can be obtained by gluing of surfaces corresponding to local unfoldings of blocks of $S_{\mr{dec}}$, and we mark every conjugate pair that corresponds to one vertex in $S$. 
This construction is invariant under mutations of $S$: mutating $S$, the corresponding triangulation can be obtained from the initial one by corresponding composite flips.

Conversely, looking at tagged triangulations containing conjugate pairs (different from block ${\mr{VI}}$), one can easily see that every conjugate pair lies either inside a digon, or inside a monogon. Recalling the definition of block-decomposable diagram, this implies that the first case corresponds to blocks of types $\mr{{III}}$ and ${\mr{IV}}$, and the latter corresponds to blocks of type ${\mr{V}}$ (in this case there is another conjugate pair inside the same monogon). In other words, every such triangulation with arbitrary chosen tuple of conjugated pairs of edges can be obtained via local unfolding from some s-decomposable skew-symmetrizable diagram.  

Furthermore, every such triangulation with chosen conjugate pairs may come from a unique s-decomposable diagram (with fixed block decomposition) only. Indeed, given a triangulation, there is a unique way to distribute triangles, digons and monogons amongst blocks, which implies uniqueness of block decomposition of folding.

The case of block $\t{\mr{VI}}$ can be easily treated separately. Folding one of the three conjugate pairs of the triangulation corresponding to block ${\mr{VI}}$ leads to the diagram of block $\t{\mr{VI}}$. 

Summarizing the discussion above, we come to the following statement.

\begin{theorem}
\label{tr}
There is a one-to-one correspondence between s-decomposable skew-symmetrizable diagrams with fixed block decomposition and ideal tagged triangulations of marked bordered surfaces with fixed tuple of conjugate pairs of edges.

\end{theorem}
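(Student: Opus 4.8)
The plan is to produce two mutually inverse maps, one of which is the \emph{local unfolding} of Section~\ref{unfolding-st} read geometrically, and the other the \emph{folding} operation, reducing everything to the skew-symmetric dictionary of~\cite{FST} between block-decomposable diagrams with fixed block decomposition and ideal tagged triangulations of marked bordered surfaces. For the first map $\Phi$: given an s-decomposable diagram $S$ with a fixed block decomposition $S_{\mr{dec}}$, replace each block by the triangulated surface piece assigned to its local unfolding in Table~\ref{fig:triangulations}, glue the pieces along the boundary arcs corresponding to identified outlets exactly as the blocks are glued, and take as distinguished tuple of conjugate pairs precisely those pairs of arcs inside a digon or monogon whose two endpoints, viewed as vertices of the block-decomposable unfolding $\tau(S_{\mr{dec}})$, fold to one vertex of $S$. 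Because a local unfolding has $d_i=1$ at every outlet, the gluing of surface pieces is unambiguous, so $\Phi(S_{\mr{dec}})$ is a well-defined ideal tagged triangulation with a chosen tuple of conjugate pairs; the block $\t{\mr{VI}}$ is handled separately using the piece of Remark~\ref{block6u}, and when no vertex of $S$ is a fold of two vertices, $\Phi$ restricts to the dictionary of~\cite{FST} with no conjugate pairs chosen.

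For the inverse map $\Psi$ (\emph{folding}), start from an ideal tagged triangulation $T$ with a distinguished tuple of conjugate pairs. By~\cite{FST}, $T$ carries an adjacency diagram $\h S$ that is block-decomposable, together with the puzzle-piece decomposition induced by $T$. Each distinguished conjugate pair is a pair of arcs of $\h S$ equipped with an involution; folding $\h S$ by the group generated by all of these involutions yields a diagram $S$ and a surjection of vertex sets $\h S\to S$. Two things must be checked: (i) $S$ is skew-symmetrizable and, taking the integers $d_i$ to be the orbit sizes, $\h S$ is an unfolding of $S$ in the sense of Section~\ref{unfolding-s}; and (ii) the puzzle pieces of $\h S$ meeting a folded digon or monogon are precisely the local unfoldings of the new blocks $\t{\mr{III}}$--$\t{\mr{VI}}$, while the remaining pieces are untouched, so that $S$ inherits a canonical block decomposition $S_{\mr{dec}}$. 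Assertion (i) is the crux; it follows from Lemma~\ref{unfolding-l} and Theorem~\ref{unfolding}, which guarantee that local unfoldings exist for all s-decomposable diagrams and that the unfolding relation is preserved under composite mutations, so that folding along a compatible tuple of conjugate pairs is genuinely well defined and compatible with mutation. Assertion (ii) is the combinatorial observation recorded before the theorem: a conjugate pair lies in a digon (forcing a block of type $\mr{III}$ or $\mr{IV}$) or in a monogon (forcing a block of type $\mr{V}$, with a second conjugate pair inside the same monogon), which together with the finite list of puzzle pieces pins down which pieces are folded and how.

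It remains to verify $\Psi\circ\Phi=\mr{id}$ and $\Phi\circ\Psi=\mr{id}$. The first identity is immediate: $\Phi$ assembles the surface out of the pieces in Table~\ref{fig:triangulations}, and $\Psi$ reads the blocks of $S_{\mr{dec}}$ back off those same pieces and off the marked conjugate pairs. For the second, one checks that re-unfolding a folded triangulation returns $T$ on the nose: by the digon/monogon dichotomy of (ii), each distinguished conjugate pair determines a unique surface piece of $T$ containing it, and these pieces together with the unchanged ones recover both $T$ and the distinguished tuple, the latter tautologically. The exceptional block $\t{\mr{VI}}$, whose surface piece has no outlets and three conjugate pairs, is dealt with by the separate remark that folding any one of its three conjugate pairs produces $\t{\mr{VI}}$ and that this is the only way $\t{\mr{VI}}$ arises. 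The main obstacle throughout is assertion (i): showing that ``folding'' a block-decomposable triangulation along an arbitrary compatible tuple of conjugate pairs does not depend on auxiliary choices and behaves well under mutation; this is exactly the content supplied by the existence of local unfoldings (Lemma~\ref{unfolding-l}, Theorem~\ref{unfolding}), which is why the statement is placed after Section~\ref{unfoldings}.
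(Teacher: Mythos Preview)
Your proposal is correct and follows essentially the same route as the paper: the discussion preceding the theorem \emph{is} the paper's proof, and it proceeds exactly by building $\Phi$ via local unfoldings (Table~\ref{fig:triangulations}), building $\Psi$ via the digon/monogon dichotomy for conjugate pairs, and treating block~$\t{\mr{VI}}$ separately.

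One small point of overreach: you present assertion~(i) as the crux and invoke Lemma~\ref{unfolding-l} and Theorem~\ref{unfolding} to justify it, but for the bare bijection these are not needed. Once assertion~(ii) is established---each marked conjugate pair sits in a digon or monogon and hence in a skew-symmetric block of type $\mr{III}$, $\mr{IV}$, or $\mr{V}$ that is exactly a local unfolding from Table~\ref{fig:triangulations}---the folded object $S$ is \emph{by construction} an s-decomposable diagram with a canonical block decomposition, so skew-symmetrizability and well-definedness of $\Psi$ are automatic from the block-level dictionary. The full unfolding property (stability of conditions (A)--(C) under arbitrary composite mutations) is what underlies the mutation-invariance remark \emph{after} the theorem, not the bijection itself. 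In short, your (ii) already implies your (i), and the logical weight sits on the combinatorial observation rather than on Theorem~\ref{unfolding}.
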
  

The correspondence above is invariant under mutations: mutating a skew-symmetriz\-able diagram, the corresponding triangulation can be obtained from the initial one by corresponding composite flips.

%
%

\section{Minimal non-decomposable diagrams}
\label{inf}

In this section we provide a polynomial-time criterion for a diagram to be mutation-finite by proving Theorem~\ref{crit}. The considerations are identical to ones used in~\cite[Section~7]{FST1}.

\begin{definition}
\label{definf}
A \emph{minimal mutation-infinite diagram} $S$ is a diagram that
\begin{itemize}
\item has infinite mutation class;
\item any proper subdiagram of $S$ is mutation-finite.
\end{itemize}
\end{definition}

Any minimal mutation-infinite diagram is connected. Notice that the property to be minimal mutation-infinite is not mutation invariant.
Note also that minimal mutation-infinite diagram of order at least $4$ does not contain edges of multiplicity greater than $4$.

We will deduce the criterion from the following lemma.

\begin{lemma}
\label{le10}

Any minimal mutation-infinite diagram contains at most $10$ vertices.

\end{lemma}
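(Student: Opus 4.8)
The plan is to mimic the proof of the analogous statement in the skew-symmetric case (\cite[Section~7]{FST1}), using the classification of mutation-finite diagrams established in Theorems~\ref{all} and~\ref{all-s} as the engine. The key structural observation is that if $S$ is minimal mutation-infinite, then every proper subdiagram of $S$ is mutation-finite; in particular, every subdiagram of $S$ of order $|S|-1$ is mutation-finite, hence (being connected or a disjoint union of connected mutation-finite diagrams) is either s-decomposable or falls into the finite list of non-decomposable mutation-finite diagrams of Figures~\ref{allfig} and~\ref{allfign}. So the question reduces to: how large can a diagram be if every one of its codimension-one subdiagrams is mutation-finite, yet $S$ itself is not?

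First I would dispose of the s-decomposable obstruction. If every proper subdiagram of $S$ of order $|S|-1$ were s-decomposable, one would want to conclude $S$ is s-decomposable (hence mutation-finite by Corollary~\ref{mut-fin}), contradicting minimality — this is exactly the kind of ``local-to-global'' statement that underlies the definition of minimal non-decomposable diagram in Section~\ref{minimal}. More precisely, I would argue that a large enough diagram all of whose vertex-deleted subdiagrams are s-decomposable must itself be s-decomposable; combined with Theorem~\ref{g8} (minimal non-decomposable diagrams have at most $7$ vertices) this forces $|S|\le 8$ in that case. So the remaining case is that some subdiagram $S_1 = S\setminus x$ of order $|S|-1$ is non-decomposable mutation-finite, hence mutation-equivalent (via Lemma~\ref{submt}, so WLOG equal after mutations) to one of the diagrams on the finite lists. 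Because each of these explicitly listed diagrams has order at most $11$ in the skew-symmetric case and at most $6$ in the genuinely skew-symmetrizable case, we immediately get $|S|\le 12$ as a crude bound; the work is to push this down to $10$ by a direct inspection.

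The real content, then, is a finite case analysis: for each non-decomposable mutation-finite diagram $S_1$ of order $9$ or $10$ from Figures~\ref{allfig} and~\ref{allfign} (the ones of order $11$, namely $E_8^{(1,1)}$, need a separate quick argument), and for each way of attaching a new vertex $x$ to $S_1$ by edges of weight $\le 4$ subject to the perfect-square-along-chordless-cycles constraint, one checks — using the computer tools \cite{progr},\cite{K} already invoked throughout the paper — whether the result $S$ is mutation-infinite and whether every proper subdiagram of $S$ is mutation-finite. The proof of Theorem~\ref{all} already does essentially this bookkeeping to show there are no non-decomposable mutation-finite diagrams of order $\ge 11$; here one reuses those computations but tracks the extra ``minimality'' condition. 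I expect that, as in \cite{FST1}, one produces an explicit minimal mutation-infinite diagram on exactly $10$ vertices (so the bound is sharp) and verifies that any attempt to build one on $11$ or more vertices already contains a proper mutation-infinite subdiagram.

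The main obstacle is not conceptual but combinatorial: organizing the finitely many cases so the computer search is genuinely exhaustive, and correctly handling the distinction between the order of a matrix's mutation class and that of its diagram (Remark~\ref{d-m}) — since ``mutation-infinite'' here always means at the level of diagrams, this is consistent throughout, but care is needed when passing through the skew-symmetrizability constraint on weights around cycles. A secondary subtlety is the local-to-global step for s-decomposability: one must ensure that ``all vertex-deleted subdiagrams s-decomposable $\Rightarrow$ s-decomposable'' holds for the relevant orders, which is where Theorem~\ref{g8} and the substitution table (Table~\ref{subst}) do the heavy lifting.
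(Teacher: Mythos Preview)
Your approach is essentially the paper's, but several bookkeeping slips inflate the work. First, in the ``all vertex-deleted subdiagrams s-decomposable'' branch you aim to prove $S$ itself is s-decomposable; that is not what happens. Since $S$ is mutation-infinite it is certainly \emph{not} s-decomposable (Corollary~\ref{mut-fin}), so $S$ is by definition a minimal non-decomposable diagram, and Theorem~\ref{g8} gives $|S|\le 7$ directly (not $\le 8$; note the paper stresses that Theorem~\ref{g8} does not assume $S$ mutation-finite). Second, you misremember the top of the list: the largest non-decomposable mutation-finite diagram in Figures~\ref{allfig}--\ref{allfign} is $E_8^{(1,1)}$ of order $10$, not $11$. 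Hence the crude bound is $|S|\le 11$ (not $12$), and there is exactly one case left, $|S|=11$, with the order-$10$ subdiagram necessarily mutation-equivalent to $E_8^{(1,1)}$. The paper then runs the computer check only over the $5739$ diagrams in that single mutation class, attaching one vertex in all ways and verifying each resulting diagram already has a mutation-infinite subdiagram of order $10$; your plan to sweep through all order-$9$ and order-$10$ non-decomposable types is unnecessary extra work. Apart from these numerical corrections, your outline matches the paper's proof.
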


\begin{proof}
Let $S$ be a minimal mutation-infinite diagram.

First, we prove a weaker statement, i.e. we show that $|S|\le 11$. In fact, this bound follows immediately from Theorems~\ref{g8} and~\ref{all}.
Indeed, either all the proper subdiagrams of $S$ are block-decomposable, or $S$ contains a proper mutation-finite non-decomposable subdiagram of order $|S|-1$ (we can assume that this diagram is connected: if it is not connected but non-decomposable, it contains a non-decomposable connected component $S_0$, and any connected subdiagram of $S$ of order $|S|-1$ containing $S_0$ is non-decomposable). In the former case $|S|\le 7$ according to Theorem~\ref{g8} (again, we emphasize that we did not require $S$ to be mutation-finite in the assumptions of Theorem~\ref{g8}). In the latter case $|S|-1\le 10$ due to Theorem~\ref{all}, which proves inequality  $|S|\le 11$.

Now suppose that $|S|=11$. Then $S$ contains a proper finite mutational non-decomposable
subdiagram $S'$ of order $10$. According to Theorem~\ref{all}, $S'$ is mutation-equivalent
to $E_{10}^{(1,1)}$. The mutation class of $E_{10}^{(1,1)}$ consists of $5739$ diagrams,
which can be easily computed using Keller's \rm{{\tt Java}} applet~\cite{K}. In other words, we see that $S$
contains one of $5739$ diagrams of order $10$ as a proper subdiagram.

Hence, we can list all minimal mutation-infinite diagrams of order $11$ in the following
way. To each of $5739$ diagrams above we add one vertex in all possible ways (we can do that since
the weight of edge is bounded by $4$; the sources codes can be found in~\cite{progr}). For every obtained diagram we check whether all its proper subdiagrams of order $10$ (and, therefore, all the others) are mutation-finite. However, the resulting set of the procedure above is empty: every obtained diagram has at least one mutation-infinite subdiagram of order $10$, so it is not minimal.

\end{proof}

As a corollary of Lemma~\ref{le10}, we get the criterion for a diagram to be mutation-finite.

\begin{theorem}
\label{crit}
A diagram $S$ of order at least $10$ is mutation-finite if and only if all subdiagrams of $S$ of order $10$ are mutation-finite.

\end{theorem}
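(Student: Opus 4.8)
The plan is to obtain Theorem~\ref{crit} as a quick corollary of Lemma~\ref{le10}, after first isolating one elementary observation: \emph{every subdiagram of a mutation-finite diagram is mutation-finite}. To see this I would note that if $S'\subset S$ and $x$ is a vertex of $S'$, then restricting $\mu_x(S)$ to the vertex set of $S'$ yields exactly $\mu_x(S')$ --- this is because the mutation rule of Figure~\ref{quivermut} at $x$ alters an edge $ij$ (with $i,j\neq x$) using only the edges $ix$, $xj$, $ij$, all of which lie in $S'$. Consequently every diagram in the mutation class of $S'$ is the restriction to $S'$ of some diagram in the mutation class of $S$, so finiteness of the latter class forces finiteness of the former.

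Granting this, the ``only if'' direction is immediate: a mutation-finite $S$ has all of its subdiagrams mutation-finite, in particular those of order $10$.

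For the ``if'' direction I would argue by contradiction. Assume $|S|\ge 10$, that every order-$10$ subdiagram of $S$ is mutation-finite, but that $S$ is mutation-infinite. Pick a subdiagram $S_0\subseteq S$ of minimal order among the mutation-infinite subdiagrams of $S$ (this collection is nonempty, containing $S$ itself). By minimality, every proper subdiagram of $S_0$ is mutation-finite, so $S_0$ is a minimal mutation-infinite diagram, whence $|S_0|\le 10$ by Lemma~\ref{le10}. Since $|S_0|\le 10\le |S|$, I may adjoin vertices of $S$ to $S_0$ one at a time to obtain a subdiagram $S_1\subseteq S$ with $S_0\subseteq S_1$ and $|S_1|=10$. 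Then $S_1$ contains the mutation-infinite diagram $S_0$, so by the observation above $S_1$ is itself mutation-infinite, contradicting the hypothesis. This proves the theorem.

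There is no genuine obstacle at this stage: all the substance sits in Lemma~\ref{le10} (which we may assume, and whose own proof rests on Theorems~\ref{g8} and~\ref{all} together with a finite computer-assisted check). The only point requiring a little care in the present argument is the passage between a diagram and its subdiagrams --- namely that mutation of a subdiagram at a common vertex commutes with restriction --- and this is exactly the observation recorded above.
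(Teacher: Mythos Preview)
Your proof is correct and follows essentially the same route as the paper: both derive the criterion directly from Lemma~\ref{le10} by observing that a mutation-infinite diagram must contain a minimal mutation-infinite subdiagram of order at most $10$, and then using that subdiagrams of mutation-finite diagrams are mutation-finite to pass from ``order at most $10$'' to ``order exactly $10$''. You simply spell out the mutation-commutes-with-restriction fact that the paper uses implicitly.
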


\begin{proof}
According to Definition~\ref{definf}, every mutation-infinite diagram contains some minimal mutation-infinite diagram as a subdiagram. Thus, a diagram is mutation-finite if and only if it does not contain any minimal mutation-infinite subdiagram. By Lemma~\ref{le10}, this holds if and only if all subdiagrams of order at most $10$ are mutation-finite. Since a subdiagram of a mutation-finite diagram is also mutation-finite, the latter condition, in its turn, holds if and only if all subdiagrams of order $10$ are mutation-finite, which completes the proof.

\end{proof}

\begin{remark}
\label{sharp}
The bound in Lemma~\ref{le10} is sharp: as it was mentioned in~\cite{FST1}, there exist skew-symmetric minimal mutation-infinite diagrams of order $10$.
\end{remark}

Reformulating Theorem~\ref{crit} in terms of matrices, we obtain the following result.

\begin{theorem}
\label{min_inf_matr}
A skew-symmetrizable $n\times n$ matrix $B$, $n\ge 10$, has finite mutation class if and only if a mutation class of every principal $10\times 10$ submatrix of $B$ is finite.

\end{theorem}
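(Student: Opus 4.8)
The plan is to deduce this statement directly from Theorem~\ref{crit} by passing between matrices and diagrams via Lemma~\ref{iff}. First I would set $S$ to be the diagram of $B$, so $|S|=n\ge 10$. For any subset $I\subseteq\{1,\dots,n\}$ of size $10$, the principal submatrix $B_I$ is again skew-symmetrizable: if $D=\mathrm{diag}(d_1,\dots,d_n)$ makes $BD$ skew-symmetric, then $D_I=\mathrm{diag}(d_i)_{i\in I}$ makes $B_I D_I$ skew-symmetric. Moreover, by the definition of the diagram of a matrix (the edge between $i$ and $j$, together with its orientation and weight, depends only on the pair $(b_{ij},b_{ji})$), the diagram of $B_I$ is exactly the subdiagram of $S$ spanned by the vertices in $I$; and conversely every order-$10$ subdiagram of $S$ arises in this way from a principal $10\times 10$ submatrix of $B$.

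Next I would apply Lemma~\ref{iff} twice. On one hand, $B$ has finite mutation class if and only if $S$ is mutation-finite. On the other hand, for each $10$-element subset $I$, the matrix $B_I$ has finite mutation class if and only if its diagram, i.e.\ the order-$10$ subdiagram of $S$ spanned by $I$, is mutation-finite. Hence the condition ``every principal $10\times 10$ submatrix of $B$ has finite mutation class'' is equivalent to the condition ``every subdiagram of $S$ of order $10$ is mutation-finite''.

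Finally I would invoke Theorem~\ref{crit}: since $S$ has order at least $10$, $S$ is mutation-finite if and only if all its subdiagrams of order $10$ are mutation-finite. Chaining the three equivalences just described yields that $B$ has finite mutation class if and only if every principal $10\times 10$ submatrix of $B$ does, which is the assertion. There is essentially no genuine obstacle here: the only points needing a sentence of justification are the stability of skew-symmetrizability under restriction to principal submatrices and the compatibility of ``principal submatrix'' with ``spanned subdiagram'', both of which are immediate from the definitions in Section~\ref{cluster}; the substance of the theorem is entirely contained in Theorem~\ref{crit} (hence in Lemma~\ref{le10}).
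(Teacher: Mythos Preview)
Your proposal is correct and matches the paper's approach exactly: the paper simply states that Theorem~\ref{min_inf_matr} is a reformulation of Theorem~\ref{crit} in terms of matrices, and you have spelled out precisely this translation via Lemma~\ref{iff} together with the observation that principal submatrices correspond to spanned subdiagrams. The extra sentences you wrote about skew-symmetrizability of principal submatrices and the diagram--submatrix correspondence are the only details the paper leaves implicit, and you handled them correctly.
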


\appendix
\section{}
In this section we list the refinements to the proof of~\cite[Theorem~5.2]{FST1} which allow us to prove Theorem~\ref{g8}.
\subsection{Block decompositions: basic tools}
We reformulate statements from~\cite[Section~4]{FST1} in our settings.

First, we fix some notation we will use.

Let $S_1$ and $S_2$ be subdiagrams of $S$ having no common vertex. We say that $S_1$ and $S_2$ are {\it orthogonal} ($S_1\perp S_2$) if no edge joins vertices of $S_1$ and $S_2$.

For a vertex $v$ of $S$ by {\it valence} of $v$ in $S$ we mean the number of neighbors of $v$ in $S$ (i.e., unsigned valence: every edge is counted with a unit weight).

For two vertices $u_i,u_j$ of diagram $S$ we denote by $(u_i,u_j)$ a directed arc connecting $u_i$ and $u_j$ which may or may not belong to $S$. It may be directed  either way. By $(u_i,u_j,u_k)$ we denote oriented triangle with vertices $u_i,u_j,u_k$ which is oriented either way and whose edges also may or may not belong to $S$. We use standard notation $\l u_i,u_j\r$ for an edge of $S$.

We denote by $\B_{\rm{I}}$, $\B_{\rm{II}}$ etc. the isomorphism classes of blocks of types $\rm{I}$, $\rm{II}$, etc. respectively. For a block ${\mathsf{B}}$ we write ${\mathsf{B}}\in\B_{\rm{I}}$ if ${\mathsf{B}}$ is of type $\rm{I}$.

\begin{prop}
\label{razval1}
Let $S$ be a connected diagram with $n$ vertices, and let $b$ be a vertex of $S$ satisfying the following properties:

$(0)$ $S\setminus b$ is not connected;

$(1)$ for any $u\in S$ the diagram $S\setminus u$ is s-decomposable;

$(2)$ at least one connected component of $S\setminus b$ has at least $3$ vertices;

$(3)$ each connected component of $S\setminus b$ has at most $n-3$ vertices.

\smallskip
\noindent
Then $S$ is s-decomposable.

\end{prop}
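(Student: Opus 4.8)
The plan is to use the cut vertex $b$ to split $S$ into two pieces overlapping only in $b$, show each piece is s-decomposable via hypothesis $(1)$ and Lemma~\ref{sub}, and then reassemble a block decomposition of $S$ by gluing the two decompositions along $b$. Concretely, let $C_1$ be a connected component of $S\setminus b$ with $|C_1|\ge 3$, granted by $(2)$, and let $R=S\setminus(C_1\cup\{b\})$ be the union of the remaining components, so that $\langle C_1,b\rangle$ and $S\setminus C_1=\langle R,b\rangle$ cover $S$ and meet exactly in $b$, with no edge between $C_1$ and $R$ because $b$ is a cut vertex. Both pieces are proper subdiagrams: $|\langle C_1,b\rangle|=|C_1|+1\le n-2$ by $(3)$, and $|S\setminus C_1|=n-|C_1|\le n-3$ since $|C_1|\ge 3$. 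As $S\setminus b$ is disconnected, $R$ is nonempty, so $\langle C_1,b\rangle\subseteq S\setminus u$ for any $u\in R$ and $S\setminus C_1\subseteq S\setminus u$ for any $u\in C_1$; by $(1)$ and Lemma~\ref{sub}, both $\langle C_1,b\rangle$ and $S\setminus C_1$ are s-decomposable.

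The core of the argument is to choose block decompositions of the two pieces in which $b$ is an \emph{outlet}, and in fact one not already matched to another outlet inside its piece; granting this, the two decompositions combine into a block decomposition of $S$ by adjoining one entry to the matching that pairs the copy of $b$ in one piece with the copy of $b$ in the other (these lie in different blocks, and no spurious weight-$4$ edges or cancellations appear because $C_1$ and $R$ share no edges). To produce such a decomposition of $\langle C_1,b\rangle$ I would attach an auxiliary pendant vertex to $b$ from the complementary part: pick $w\in R$ adjacent to $b$ and pass to $\langle C_1,b,w\rangle$, which is again a proper subdiagram — its complement meets another component of $S\setminus b$, using $(3)$ once more — hence s-decomposable. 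In any block decomposition of $\langle C_1,b,w\rangle$ the vertex $w$ has valence $1$, so it lies in a single block in which it has valence $1$; an inspection of the block list (Figure~\ref{bloki}, Table~\ref{newblocks}) together with a count of the valence of $b$ then forces this block to be the edge $\{w,b\}$ with $b$ an outlet of it. Since $b$ also has neighbours in $C_1$, it belongs to a further block, hence is matched there; deleting the block $\{w,b\}$ (and the vertex $w$) leaves a block decomposition of $\langle C_1,b\rangle$ in which $b$ is an unmatched outlet. The symmetric argument for $S\setminus C_1$ uses an auxiliary pendant vertex $w'\in C_1$ adjacent to $b$, available precisely because $|C_1|\ge 3$.

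The delicate point — and the only place where something genuinely new happens relative to the skew-symmetric proof of~\cite[Section~4]{FST1} — is the block-by-block analysis hidden in ``forces this block to be the edge $\{w,b\}$''. With the new weight-$2$ and weight-$4$ blocks present, a valence-one vertex can also sit in a weight-$2$ edge-block of type $\widetilde{\mathrm{III}}$ or at an end of a block of type $\mathrm{II}$, and these possibilities must be ruled out (or shown to still place $b$ in the outlet position) using the valence of $b$, and, where that is not enough, by invoking hypothesis $(1)$ on a suitable $S\setminus u$ to exclude the offending local picture. I expect this finite verification, that no configuration of old or new blocks escapes the dichotomy, to be the bulk of the work and the main obstacle; it is exactly the kind of refinement collected in this appendix.
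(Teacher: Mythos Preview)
Your overall shape is reasonable, but the route is genuinely different from the paper's and the step you flag as ``delicate'' is in fact a gap, not a routine verification.

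The paper (following \cite[Prop.~4.6]{FST1}) does \emph{not} pass through Lemma~\ref{sub} or any pendant-vertex trick. It splits $S\setminus b$ into $S_1$ and $S_2$, chooses $a_2\in S_2$ carefully (so that $S\setminus a_2$ is connected and no leaf of $S_1$ hangs off $b$), takes a block decomposition of $S\setminus a_2$ \emph{directly from hypothesis $(1)$}, and then runs a case analysis on block types to show that no single block of this decomposition meets both $S_1$ and $S_2\setminus a_2$. Since $b$ has neighbours on both sides, it then lies in exactly one block on each side and is therefore an outlet of the restricted decomposition of $\langle S_1,b\rangle$. The same argument with $a_1$ handles $\langle S_2,b\rangle$, and the two pieces glue. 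The new content in this appendix is just that the ``no block straddles'' case analysis survives the addition of the blocks $\widetilde{\mathrm{III}}$, $\widetilde{\mathrm{IV}}$, $\widetilde{\mathrm{V}}_1$, $\widetilde{\mathrm{V}}_2$, $\widetilde{\mathrm{V}}_{12}$.

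Your argument instead takes an s-decomposition of the much smaller diagram $\langle C_1,b,w\rangle$ obtained via Lemma~\ref{sub}, i.e.\ by iteratively deleting vertices from some $S\setminus u$; you have very little control over this decomposition. The assertion ``$w$ lies in a single block in which it has valence~$1$'' is not justified (an outlet can sit in two blocks, and cancellations can reduce valence), and even granting it, the block containing $w$ need not be the bare edge $\{w,b\}$: already among the \emph{old} blocks, a block of type $\mathrm{III}$ with outlet $b$ and dead ends $w$ and some $c\in C_1$ fits all your local constraints. Removing $w$ then replaces this block by the edge $\{b,c\}$, and if $b$ has any further neighbour in $C_1$ it remains matched, so you do \emph{not} obtain $b$ as a free outlet. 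Your valence count on $b$ does not exclude this, and the obstruction has nothing to do with the new skew-symmetrizable blocks. The careful choice of $a_1,a_2$ in the paper (avoiding leaves of $S_1$ attached to $b$) is there precisely to kill this type-$\mathrm{III}$ phenomenon; your choice of an arbitrary pendant $w$ does not.
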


\begin{proof}
The proof follows the proof of~\cite[Proposition~4.6]{FST1}. We divide $S\setminus b$ into two parts $S_1$ and $S_2$ in the following way: $S_1$ is any connected component of $S\setminus b$ with at least $2$ vertices (it exists by assumption $(2)$), and $S_2=S\setminus \l b,S_1\r$.

Now choose points $a_1\in S_1$ and $a_2\in S_2$ satisfying the following conditions: $S\setminus a_i$ is connected, and $S\setminus a_i$ does not contain leaves attached to $b$ and belonging to $S_1$. We always can take as $a_2$ a vertex of $S_2$ at the maximal distance from $b$. To choose $a_1\in S_1$, we look at the valence of $b$ in $\l S_1,b\r$ and structure of $S_1$. If either there is exactly one vertex of $S_1$ joined with $b$, or $S_1$ has no leaves (as a diagram), we choose $a_1$ as a vertex of $S_1$ being at the maximal distance from $b$. If there are at least two vertices of $S_1$ joined with $b$ and there is a leaf $x$ of $S_1$, then we take $x$ as $a_1$. Clearly, $a_1$ and $a_2$ chosen in this way satisfy required conditions.

We need to prove that each $\l S_i,b\r$ is s-decomposable with outlet $b$. For that, we consider the diagram $S\setminus a_2$, choose any its decomposition into blocks, and prove that for any block ${\mathsf{B}}$ either $\mathsf{B}\cap S_1=\emptyset$ or ${\mathsf{B}}\cap (S_2\setminus a_2)=\emptyset$. Let us consider all possible types of block ${\mathsf{B}}$.

\smallskip
\noindent
{\bf Case 1:} ${\mathsf{B}}\in\B_{\mr{III}},\B_{\mr{IV}},\B_{\mr{V}},\B_{\t{\mr{III}}},\B_{\t{\mr{V}}_1},\B_{\t{\rm{V}}_2},\B_{\t{\rm{V}}_{12}}$.\\
The proof repeats the proofs of Cases~1 and~2 of~\cite[Proposition~4.6]{FST1}.

\smallskip
\noindent
{\bf Case 2:} ${\mathsf{B}}\in\B_{\t{\mr{IV}}},\B_{\rm{II}},\B_{\rm{I}}$.\\
The proof repeats the proof of Cases~3 (for the first two types) and~4 (for the latter type) of~\cite[Proposition~4.6]{FST1}.

\end{proof}

We say that a leaf $x$ of a diagram $S$ is {\it simple} if a unique edge emanating from $x$ is simple (i.e., of unit weight).

\begin{prop}
\label{razval2}

Let $S$ be a connected diagram $S=\l S_1,b_1,b_2,S_2\r$, where $S_1\perp S_2$, and $S$ has at least $8$ vertices. Suppose that

$(0)$ $b_1$ and $b_2$ are not joined in $S$;

$(1)$ for any $u\in S$ the diagram $S\setminus u$ is s-decomposable;

$(2)$ there exist $a_1\in S_1,a_2\in S_2$ such that

${}$\phantom{w}  $(2a)$ $S\setminus a_i$ is connected;

${}$\phantom{w}    $(2b)$ either $\l S_i,b_1,b_2\r\setminus a_i$ or $\l S_j,b_1,b_2\r$ (for $i,j=1,2,\ j\ne i$)  contains no simple leaves attached to $b_1$;

\qquad\quad similarly, either $\l S_i,b_1,b_2\r\setminus a_i$ or $\l S_j,b_1,b_2\r$ (for $j\ne i$) contains no simple leaves attached to $b_2$;


${}$\phantom{w}  $(2c)$ if $a_i$ is joined with $b_j$ (for $i,j=1,2$), then there is another vertex $w_i\in S_i$ attached to $b_j$.

\smallskip

\noindent
Then $S$ is s-decomposable.

\end{prop}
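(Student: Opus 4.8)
\emph{Plan.} The strategy is the one used for Proposition~\ref{razval1}, following the proof of the corresponding two-cut-vertex statement in~\cite{FST1}: I would show that each of the diagrams $\l S_1,b_1,b_2\r$ and $\l S_2,b_1,b_2\r$ is s-decomposable with both $b_1$ and $b_2$ as outlets, and then recover $S$ by gluing these two diagrams along the outlets $b_1,b_2$. Since $b_1$ and $b_2$ are not joined (hypothesis $(0)$) and $S_1\perp S_2$, every edge of $S$ already lies in $\l S_1,b_1,b_2\r$ or in $\l S_2,b_1,b_2\r$, so this gluing produces exactly $S$ (two simple edges with the same endpoints at $b_1$ or $b_2$ may have to be combined into an edge of weight $4$, but in every case the outcome coincides with $S$). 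Thus the whole problem reduces to decomposing the two halves with the prescribed outlets.

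To decompose $\l S_1,b_1,b_2\r$, remove $a_2$: by $(2a)$ the diagram $S\setminus a_2$ is connected, and being a proper subdiagram of $S$ on at least $7$ vertices it is s-decomposable by $(1)$. Fix a block decomposition of $S\setminus a_2$. The key claim is that no block of this decomposition meets both $S_1$ and $S_2\setminus a_2$. Indeed, such a block ${\mathsf{B}}$ is connected and contains vertices on both sides of the separating pair $\{b_1,b_2\}$; since all edges of ${\mathsf{B}}$ are edges of $S$ and every $S_1$--$S_2$ path in $S$ runs through $b_1$ or $b_2$, the block ${\mathsf{B}}$ must contain $b_1$ or $b_2$. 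If ${\mathsf{B}}$ contains a chordless cycle joining its $S_1$-part to its $S_2$-part --- this covers blocks of types $\mr{IV}$, $\mr{V}$, $\t{\mr{IV}}$, $\t{\mr V}_1$, $\t{\mr V}_2$, $\t{\mr V}_{12}$, $\t{\mr{VI}}$ and the cyclic occurrences of $\mr{III}$ and $\mr{\t{III}}$ --- then either a vertex of $S_1$ is joined to a vertex of $S_2$, contradicting $S_1\perp S_2$, or $b_1$ is joined to $b_2$, contradicting $(0)$. Otherwise ${\mathsf{B}}$ has a vertex whose deletion separates its $S_1$-part from its $S_2$-part; that vertex is $b_1$ or $b_2$, and then ${\mathsf{B}}$ attaches a simple leaf (or a short simple configuration) of $S_1$ to $b_i$, while the mirror picture on the $S_2$-side, taken together with the removal of $a_i$, is excluded by $(2b)$ and $(2c)$ exactly as in the cases of~\cite[Proposition~4.6]{FST1}; the only genuinely new verifications concern the new blocks $\mr{\t{III}a}$, $\mr{\t{III}b}$, $\t{\mr{IV}}$, $\t{\mr V}_1$, $\t{\mr V}_2$, $\t{\mr V}_{12}$, $\t{\mr{VI}}$, and these I would collect in the appendix.

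Granting the claim, the blocks of the decomposition of $S\setminus a_2$ split into those lying in $\l S_1,b_1,b_2\r$ and those lying in $\l S_2\setminus a_2,b_1,b_2\r$; the first collection is a block decomposition of $\l S_1,b_1,b_2\r$, and $b_1,b_2$ are outlets of it (were some $b_i$ a dead end, it would be matched entirely among $S_1$-side blocks, which together with the symmetric argument applied to $S\setminus a_1$ contradicts connectedness of $S$ through $b_i$; here $|S|\ge 8$ guarantees that both halves are large enough for this bookkeeping). Running the same argument with $a_1$ deleted shows $\l S_2,b_1,b_2\r$ is s-decomposable with outlets $b_1,b_2$, and gluing the two halves along $b_1$ and $b_2$ reconstitutes $S$, which is therefore s-decomposable. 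The main obstacle is precisely the case analysis inside the key claim: checking, block by block, that a block of one of the new types $\mr{\t{III}}$--$\t{\mr{VI}}$ cannot straddle the pair $\{b_1,b_2\}$ without forcing an edge forbidden by $S_1\perp S_2$ or by $(0)$, or a simple-leaf pattern at $b_1$ or $b_2$ contradicting $(2b)$/$(2c)$; a secondary subtlety is the bookkeeping that keeps $b_1$ and $b_2$ outlets of the restricted decompositions, which, as in~\cite{FST1}, requires combining the information from the decompositions of $S\setminus a_1$ and of $S\setminus a_2$.
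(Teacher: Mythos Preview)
Your overall strategy matches the paper's: show the blocks of a decomposition of $S\setminus a_2$ cannot straddle $\{b_1,b_2\}$, then restrict to each half with $b_1,b_2$ as outlets. But your key claim contains a real gap.

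You write ``since all edges of ${\mathsf{B}}$ are edges of $S$''. This is false. In a block decomposition, two simple edges with the same endpoints and opposite orientations cancel; thus a block may contain edges that are \emph{not} present in $S$. Concretely, a block ${\mathsf{B}}$ of type ${\rm II}$ (an oriented triangle) with vertices $w_1\in S_1$, $b_1$, and $w_2\in S_2\setminus a_2$ can perfectly well appear in a decomposition of $S\setminus a_2$: the forbidden edge $(w_1,w_2)$ is then cancelled by another block ${\mathsf{B}}_1$ glued along $(w_1,w_2)$ (necessarily of type ${\rm I}$, ${\rm II}$, or $\t{\rm IV}$). Your dichotomy ``either the block contains a chordless cycle, forcing a forbidden edge, or a cut vertex, forcing a simple leaf'' collapses once you allow cancellation, and precisely the types ${\rm II}$ and $\t{\rm IV}$ slip through.

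These are exactly the hard cases in the paper's proof. For ${\mathsf{B}}\in\B_{\t{\rm IV}}$ one must analyze the cancelling block ${\mathsf{B}}_1$ and, when ${\mathsf{B}}_1$ is of type ${\rm II}$ with third vertex $b_2$, invoke Proposition~\ref{razval1} at $b_2$. For ${\mathsf{B}}\in\B_{\rm II}$ the argument is much longer: one gets a second type-${\rm II}$ block ${\mathsf{B}}_1$ on $\{w_1,w_2,b_2\}$, then a further neighbor $t_1\in S_1$ of $b_1$, then a block ${\mathsf{B}}_2$ through $b_2$, and the proof splits according to whether ${\mathsf{B}}_2$ lies on the $S_1$- or $S_2$-side, with several subcases (in the latter, one passes to a decomposition of $S\setminus t_1$ and examines all possible block types through $w_2$). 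The hypotheses $(2b)$ and $(2c)$ are used inside this analysis, not merely to rule out leaves hanging off a single block. Your proposal gives no hint of this machinery; as written, the argument for the key claim does not go through.
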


\begin{proof}
The proof follows the proof of~\cite[Proposition~4.8]{FST1}.
First, we show that for any decomposition of  $S\setminus a_2$ any block ${\mathsf{B}}$ is contained entirely either in $\l S_1,b_1,b_2\r$ or in $\l S_2,b_1,b_2\r\setminus a_2$. For this, we consider any decomposition of $S\setminus a_2$, assuming that for a block ${\mathsf{B}}$ both intersections ${\mathsf{B}}\cap S_1$ and ${\mathsf{B}}\cap (S_2\setminus a_2)$ are not empty. We consider all possible types of block ${\mathsf{B}}$
and obtain contradiction for each type.

\medskip
\noindent
{\bf Case 1:} ${\mathsf{B}}\in\B_{\rm{V}},\B_{\rm{III}},\B_{\t{\mr{III}}},\B_{\t{\rm{V}}_1},\B_{\t{\rm{V}}_2},\B_{\t{\rm{V}}_{12}}$. \
The proof is the same as in Proposition~\ref{razval1}.

\medskip
\noindent
{\bf Case 2:} ${\mathsf{B}}\in\B_{\rm{IV}}$. \
The proof is very similar to the proof of Case~2 of~\cite[Proposition~4.8]{FST1}. In the case when $b_1,b_2$ are the dead ends of ${\mathsf{B}}$, the only difference is that, while considering complementary block ${\mathsf{B}}_1$, we need to allow it to be of type $\t{\rm{IV}}$, too. The consideration of that type itself does not differ from consideration of ${\mathsf{B}}_1$ of type $\t{\rm{III}}$.

In the case when $b_1,b_2$ are the outlets of ${\mathsf{B}}$, the only difference is in possibility of gluing a block ${\mathsf{B}}_1$ of type $\t{\rm{IV}}$ along the edge $(b_1,b_2)$. As a result we get a diagram with $5$ vertices without outlets in contradiction to connectedness of $S\setminus a_2$.

\medskip
\noindent
{\bf Case 3:} ${\mathsf{B}}\in\B_{\t{\rm{IV}}}$. This case is new.\
First, we note that a unique dead end of ${\mathsf{B}}$ must conside with one of $b_1$ and $b_2$, say $b_1$. Denote the outlets of ${\mathsf{B}}$ by $w_1\in S_1$ and $w_2\in S_2\setminus a_2$. Then, to avoid the edge $(w_1,w_2)$ in $S$, a block ${\mathsf{B}}_1$ should be glued along $(w_1,w_2)$. ${\mathsf{B}}_1$ may be of type ${\rm{I}},{\t{\rm{IV}}}$, or ${\rm{II}}$. In the first two cases we get a diagram with $3$ or $4$ vertices without outlets, which contradicts connectedness of $S\setminus a_2$.

In the latter case, we may assume that the third vertex of ${\mathsf{B}}_1$ is $b_2$. Then $b_2$ is the unique outlet of the union of ${\mathsf{B}}$ and ${\mathsf{B}}_1$. Since $|S_1|\ge 2$, $b_2$ is contained in some block ${\mathsf{B}}_2$, where all the vertices of ${\mathsf{B}}_2\setminus b_2$ belong to $S_1$. Furthermore, notice that no vertex of $S\setminus\l b_1,b_2\r$ is joined with $w_1$, and no vertex of $S\setminus\l b_1,b_2,a_2\r$ is joined with $w_2$. Since $|S\setminus\l b_1,b_2,w_1,w_2,a_2\r|\ge 3$, we conclude that $S$ is s-decomposable by Proposition~\ref{razval1} applied to $b_2$.

\medskip
\noindent
{\bf Case 4:} ${\mathsf{B}}\in\B_{\t{\rm{II}}}$. \
The proof follows the proof of Case~3 of~\cite[Proposition~4.8]{FST1}. Let vertices of ${\mathsf{B}}$ be $w_1\in S_1, w_2\in S_2\setminus a_2$, and $b_1$. We may assume that there is a block ${\mathsf{B}}_1$ of second type with vertices $w_1,w_2,b_2$. We may also assume that there is a vertex $t_1\in S_1$ distinct from $w_1$ attached to $b_1$, and there is a block ${\mathsf{B}}_2$ containing $b_2$. The proof splits into two cases: ${\mathsf{B}}_2$ is entirely contained either in $\l S_1,b_1,b_2\r$ or in $\l b_1,b_2,S_2\setminus a_2\r$.

\smallskip
\noindent
{\bf Case 4.1:} ${\mathsf{B}}_2$ is contained in $\l S_1,b_1,b_2\r$. \
The proof repeats the proof of Case~3.1 of~\cite[Proposition~4.8]{FST1}. We just need to substitute all occurrences of ``leaf'' by ``simple leaf''. Also in case~3.1.2 of~\cite[Proposition~4.8]{FST1} vertex $r_1$ may not exist, but then the edge $(t_1,b_1)$ is not simple. In this case we take as $S'$ the subdiagram $S'\!=\!\l t_1,b_1,w_1,w_2,a_2\r$ which is mutation-infinite.

\smallskip
\noindent
{\bf Case 4.2:} ${\mathsf{B}}_2$ is contained in $\l b_1,b_2,S_2\setminus a_2\r$. \
As in the proof of Case~3.2 of~\cite[Proposition~4.8]{FST1}, we can assume that $t_1$ is a leaf of $S$ (it may not be simple), and $a_2$ is attached to $w_2$ by non-double edge. We can also assume that $a_2$ is joined with some $t_2\in S\setminus\l t_1,b_1,w_1,w_2,b_2\r$. Now we take any decomposition of $S\setminus t_1$ and consider all possible types of blocks (with at least $3$ vertices) containing $w_2$ (taking into account that the only vertices joined with $w_2$ are $b_1,b_2$ and $a_2$).

\smallskip
\noindent
{\bf Case 4.2.1:} $w_2$ lies in block ${\mathsf{B}}_3$ of type ${\rm{V}}$. \
See the proof of Case~3.2.1 of~\cite[Proposition~4.8]{FST1}.

\smallskip
\noindent
{\bf Case 4.2.2:} $w_2$ lies in block ${\mathsf{B}}_3$ of type ${\t{\mr{V}}_1}$ or ${\t{\mr{V}}_2}$. \
Due to its valence and the fact that only one edge emanating from $w_2$ may not be simple, $w_2$ is an outlet of ${\mathsf{B}}_3$. Thus, orientations of edges $(w_2,b_1)$ and $(w_2,b_2)$ must coincide, which does not hold.

\smallskip
\noindent
{\bf Case 4.2.3:} $w_2$ lies in block ${\mathsf{B}}_3$ of type ${\t{\mr{V}}_{12}}$. \
Since $w_2$ is incident to three edges, $w_2$ is an outlet of ${\mathsf{B}}_3$. Therefore, $w_2$ is incident to at least $2$ non-simple edges, which is not true.

\smallskip
\noindent
{\bf Case 4.2.4:} $w_2$ is contained in block ${\mathsf{B}}_3$ of type ${\rm{IV}}$. \
Due to its valence, $w_2$ is an outlet of ${\mathsf{B}}_3$. Consider two cases.

\smallskip
\noindent
{\bf Case 4.2.4.1:} $w_2$ is contained in block ${\mathsf{B}}_3$ only. \
See the proof of Case~3.2.2.1 of~\cite[Proposition~4.8]{FST1}.

\smallskip
\noindent
{\bf Case 4.2.4.2:} $w_2$ is contained simultaneously in two blocks ${\mathsf{B}}_3$ and ${\mathsf{B}}_4$, ${\mathsf{B}}_4\ne {\mathsf{B}}_3$. \
Block ${\mathsf{B}}_4$ is of type ${\rm{II}}$ or ${\t{\rm{IV}}}$. In the latter case orientations of edges $\l w_2,b_1\r$ and $\l w_2,b_2\r$ coincide, so we get a contradiction. In the first case, by the same reason $a_2$ is a dead end of ${\mathsf{B}}_3$. This implies that valence of $a_2$ is $2$,
so only $t_2$ can be outlet of ${\mathsf{B}}_3$. The second dead end of ${\mathsf{B}}_3$ should be joined
with both $t_2$ and $w_2$. Since $b_1$ is not joined with $t_2$, $b_2$ is a dead end of ${\mathsf{B}}_3$.
But this contradicts existence of the edge joining $b_2$ and $w_1$.

\smallskip
\noindent
{\bf Case 4.2.5:} $w_2$ is contained in block ${\mathsf{B}}_3$ of type ${\t{\rm{IV}}}$. \
In this case $w_2$ is contained in block ${\mathsf{B}}_4$ of type ${\rm{I}}$, and $a_2$ is the dead end of ${\mathsf{B}}_3$, while $w_2$ and one of $b_1,b_2$ are the outlets of ${\mathsf{B}}_3$. But this contradicts the existence of the edge $\l a_2,t_2\r$.

\smallskip
\noindent
{\bf Case 4.2.6:} $w_2$ is contained in block ${\mathsf{B}}_3$ of type ${\rm{III}}$. \
Due to orientation of edges, $w_2$ is the outlet of ${\mathsf{B}}_3$, and at least one of $b_1$ and $b_2$ is a dead end of ${\mathsf{B}}_3$, hence a leaf of $S\setminus t_1$. But neither $b_1$ nor $b_2$ is a leaf since they are joined with $w_1$.

\smallskip
\noindent
{\bf Case 4.2.7:} $w_2$ is contained in block ${\mathsf{B}}_3$ of type ${\rm{II}}$. \
In this case $w_2$ is also contained in block ${\mathsf{B}}_4$ of type ${\rm{I}}$ or ${\t{\mr{III}}}$.
In the latter case $a_2$ must be a leaf of $S\setminus t_1$, which contradicts existence of edge $\l a_2,t_2\r$.
For the proof of the first case see Case~3.2.4 of~\cite[Proposition~4.8]{FST1}.

\medskip
\noindent
{\bf Case 5:} ${\mathsf{B}}\in\B_{\rm{I}}$. \ The proof is the same as in Proposition~\ref{razval1}.

\medskip

The rest of the proof repeats the proof of the~\cite[Proposition~4.8]{FST1}.
In few cases we need to consider blocks of type ${\t{\mr{IV}}}$ together with types ${\rm{II}}$ and ${\rm{IV}}$, but this requires only minor changes in the proof: while substituting block of fourth type, we lose a vertex, and while substituting block of second type, we substitute an outlet by a dead end.

\end{proof}

\begin{cor}
\label{after2}
Suppose that $S=\l S_1,b_1,b_2,S_2\r$ satisfies all the assumptions of Proposition~\ref{razval2} except $(2)$. Suppose also that $|S_1|\ge 2$, $|S_2|\ge 3$, and there exists $c_1\in S_1$ such that the following holds:

$(a)$ $S_1\setminus c_1$ is connected;

$(b)$ $S_1$ contains no leaves of $S$ attached to  $b_1$ or $b_2$,  and $S_1\setminus c_1$ contains no leaves of $S\setminus c_1$ attached to  $b_1$ or $b_2$;

$(c)$ $S_1\setminus c_1$ is attached to both $b_1$ and $b_2$.

Then $S$ is s-decomposable.

\end{cor}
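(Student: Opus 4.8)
The plan is to deduce the statement from Proposition~\ref{razval2}: all the hypotheses of that proposition except $(2)$ are assumed here, so it suffices to produce vertices $a_1\in S_1$ and $a_2\in S_2$ for which conditions $(2a)$--$(2c)$ hold.

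First I would take $a_1:=c_1$ and check that $(2a)$--$(2c)$ hold for $(a_1,a_2)=(c_1,a_2)$ and \emph{every} choice of $a_2\in S_2$. The point is that, since $S_1\perp S_2$, each vertex of $S_1$ has the same neighbours in $S$, in $\langle S_1,b_1,b_2\rangle$, and (after deleting $c_1$) in $S\setminus c_1$ and in $\langle S_1,b_1,b_2\rangle\setminus c_1$; hence a leaf of $\langle S_1,b_1,b_2\rangle$ lying in $S_1$ is a leaf of $S$, and a leaf of $\langle S_1,b_1,b_2\rangle\setminus c_1$ lying in $S_1$ is a leaf of $S\setminus c_1$. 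So assumption $(b)$ yields $(2b)$ (using the first alternative for $i=1$ and the second for $i=2$), assumption $(c)$ yields $(2c)$ for $a_1=c_1$, and $(2a)$ for $c_1$ follows from $(a)$ and $(c)$ once we note that $\langle S_1,b_1,b_2\rangle$ is connected (because $S_1\setminus c_1$ is connected and joined to both $b_1$ and $b_2$, and $c_1$ is joined to this connected set) and that every connected component of $S_2$ is joined to $b_1$ or to $b_2$ (because $S$ is connected, $S_1\perp S_2$, and $b_1\not\sim b_2$ by $(0)$).

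It remains to choose $a_2\in S_2$ with $S\setminus a_2$ connected and $(2c)$ satisfied, i.e.\ such that if $a_2\sim b_j$ then some other vertex of $S_2$ is also joined to $b_j$; call $v\in S_2$ \emph{bad} if it violates this. There are at most two bad vertices. Let $\widetilde S_2$ be the connected graph obtained from $S_2$ by identifying $b_1$ and $b_2$ to a single vertex $\beta$; since $\langle S_1,b_1,b_2\rangle$ is connected, a vertex $v\in S_2$ is a cut vertex of $S$ precisely when it is a cut vertex of $\widetilde S_2$. If $\widetilde S_2$ is $2$-connected, every vertex of $S_2$ keeps $S$ connected and, as $|S_2|\ge 3$, a non-bad one exists. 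Otherwise the block-cut tree of $\widetilde S_2$ has at least two leaf blocks, and $\beta$ lies in the interior $B\setminus c_B$ of at most one of them; choosing a leaf block $B$ with $\beta\notin B\setminus c_B$, the vertices of $B\setminus c_B$ are non-cut vertices of $S$ contained in $S_2$, and a short case analysis on $|B\setminus c_B|$ (and, when it is $1$ or $2$, on whether $c_B=\beta$) produces a non-bad vertex among them or forces a second suitable leaf block. The key observation is that a vertex of $B\setminus c_B$ adjacent to $\beta$ would have to be the unique neighbour in $S_2$ of $b_1$ or of $b_2$, and the presence of two such vertices would leave $\beta$ with no other neighbours, disconnecting $\widetilde S_2$. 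Taking this vertex as $a_2$, condition $(2)$ of Proposition~\ref{razval2} is verified and we conclude that $S$ is s-decomposable.

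The only real difficulty is the last paragraph, namely the purely combinatorial extraction of $a_2$; this part is essentially the same as in~\cite{FST1}, and the new block types play no role in it because the choice of $a_2$ depends only on the underlying graph of $S$, not on the edge weights.
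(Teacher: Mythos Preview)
Your approach is correct and is exactly what the paper intends: it simply says ``The proof repeats the proof of~\cite[Corollary~4.9]{FST1}'', and that proof proceeds precisely by setting $a_1=c_1$ (so that $(2a)$--$(2c)$ for $i=1$ follow from $(a)$--$(c)$, with $(2b)$ handled via the $S_1$-alternative for both $i$) and then extracting a suitable $a_2\in S_2$ by a purely graph-theoretic argument independent of edge weights.

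One small comment on your block--cut tree argument for $a_2$: the case analysis at the end is a bit loose as written. A cleaner way to finish is to note that if some leaf block $B$ has $\beta\notin B$, then no interior vertex of $B$ is adjacent to $\beta$ (all its edges lie in $B$), hence none is bad, and you are done. Otherwise every leaf block contains $\beta$; since all leaves of the block--cut tree are then adjacent to the single node $\beta$, the tree is a star with centre $\beta$, so $\beta$ is the \emph{only} cut vertex of $\widetilde S_2$ and every vertex of $S_2$ is non-cut; with $|S_2|\ge 3$ and at most two bad vertices you again find $a_2$. This avoids the ad hoc split on $|B\setminus c_B|$.
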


The proof repeats the proof of~\cite[Corollary~4.9]{FST1}.

\subsection{Minimal non-decomposable diagrams}

In this section, we generalize results of~\cite[Section~5]{FST1}.
We recall the definition of minimal non-decomposable diagram.

A \emph{minimal non-decomposable diagram} $S$ is a diagram that
\begin{itemize}
\item is non-decomposable;
\item for any $u\in S$ the diagram $S\setminus u$ is s-decomposable.
\end{itemize}
As before, any minimal non-decomposable diagram is connected.

\medskip
\noindent
{\bf Theorem~5.1.}
{\it Any minimal non-decomposable diagram contains at most $7$ vertices.}

\medskip

The proof follows the proof of Theorem~5.2 from~\cite{FST1}. We assume that there exists a diagram $S$ of order at least $8$ satisfying the assumptions of Theorem~\ref{g8}, and show for each type of block that if an s-decomposable subdiagram $S\setminus u$ contains block of this type then $S$ is also s-decomposable.

Throughout this section we assume that $S$ satisfies the assumptions of Theorem~\ref{g8} (and $|S|\ge 8$). We do not assume the mutation class of $S$ to be finite.

\begin{lemma}
\label{no5}
For any $x\in S$ any block decomposition of $S\setminus x$ does not contain blocks of type ${\rm{V}}$, ${\t{\mr{V}}_1}$, ${\t{\mr{V}}_2}$ and ${\t{\mr{V}}_{12}}$.

\end{lemma}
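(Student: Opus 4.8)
The plan is to argue by contradiction and reduce everything to the reduction results established above, namely Propositions~\ref{razval1},~\ref{razval2} and Corollary~\ref{after2}; the argument follows the type~${\rm{V}}$ part of the proof of~\cite[Theorem~5.2]{FST1}. So suppose that for some $x\in S$ a block decomposition of $S\setminus x$ contains a block $\mathsf{B}$ belonging to one of $\B_{\rm{V}}$, $\B_{\t{\mr{V}}_1}$, $\B_{\t{\mr{V}}_2}$, $\B_{\t{\mr{V}}_{12}}$; I would then derive that $S$ itself is s-decomposable, contradicting the standing assumption that $S$ is non-decomposable. Two observations make the hypotheses of those statements easy to supply: the recurrent requirement ``$S\setminus u$ is s-decomposable for every $u\in S$'' is automatic here, since $S$ is minimal non-decomposable; and, as $|S|\ge 8$ while $|\mathsf{B}|\le 5$, at least three vertices of $S$ lie outside $\mathsf{B}$, which is exactly what the size conditions in the propositions need.

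The structural point driving the reduction is that each of these four blocks has a dead end $v$ whose neighbours all lie inside the block (and $\t{\mr{V}}_{12}$ has, in addition, a unique outlet $c$ whose only neighbours inside the block are its two dead ends). Since dead ends are never identified during gluing, the neighbours of $v$ in $S\setminus x$ are precisely its neighbours in $\mathsf{B}$, and in $S$ they are these together with at most $x$. Deleting $v$ — or, for $\t{\mr{V}}_{12}$, the outlet $c$ — from $\mathsf{B}$ breaks it into two ``sides'' of at most two vertices each; correspondingly, unless $x$ happens to link the two sides, that vertex is a cut vertex of $S$ whose removal splits $S$ into parts that are subdiagrams of the s-decomposable diagram $S\setminus x$, hence s-decomposable by Corollary~\ref{sub} (and the size estimates above guarantee the components meet the cardinality constraints $(2)$, $(3)$ of Proposition~\ref{razval1}). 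Proposition~\ref{razval1} — or Proposition~\ref{razval2} for a separating pair of dead ends, or Corollary~\ref{after2} — then produces an s-decomposition of $S$.

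To make this precise I would treat the cases $\mathsf{B}\in\B_{\rm{V}}$, $\B_{\t{\mr{V}}_1}$, $\B_{\t{\mr{V}}_2}$, $\B_{\t{\mr{V}}_{12}}$ separately: the skew-symmetric case $\B_{\rm{V}}$ is the one already handled in~\cite[Section~5]{FST1}, and for the three new blocks the argument has the same shape, the only new steps being the choice of the separating vertex or pair and a check that the auxiliary vertices required by the propositions are available. Within each case one subdivides further according to whether $x$ is adjacent to the chosen separating vertex and to which outlets of $\mathsf{B}$ other blocks are glued; in the configurations where removing a single vertex leaves $S$ connected — which can happen only when $x$ bridges the two sides of $\mathsf{B}$ — one passes instead to a pair of dead ends and applies Proposition~\ref{razval2}, or invokes Corollary~\ref{after2}. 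I expect the only genuine work, and hence the main obstacle, to be this bookkeeping: verifying, in every attachment pattern of the remaining blocks and every position of $x$, the technical side conditions of the propositions (absence of simple leaves attached to the separating vertices, and existence of the vertices $a_i$, $w_i$, $t_i$, $c_1$ that they call for). Exactly as in~\cite{FST1}, the edge weights play no role in this verification, so the new skew-symmetrizable blocks $\t{\mr{V}}_1,\t{\mr{V}}_2,\t{\mr{V}}_{12}$ introduce no genuinely new subcase — only more instances of the same check.
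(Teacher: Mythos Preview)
Your plan has a structural error that undermines the whole reduction. You assert that deleting a chosen dead end $v$ (or, for $\t{\mr V}_{12}$, the outlet) ``breaks $\mathsf{B}$ into two sides of at most two vertices each''. This is false: each of the blocks ${\rm V},\ \t{\mr V}_1,\ \t{\mr V}_2,\ \t{\mr V}_{12}$ is $2$-connected. For instance, block~${\rm V}$ minus its outlet is a $4$-cycle on the dead ends, and minus any dead end is still connected through the outlet; $\t{\mr V}_{12}$ is an oriented triangle. So no vertex of $\mathsf{B}$ is a cut vertex of $\mathsf{B}$, and the mechanism you describe for producing a cut vertex of $S$ does not exist. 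The only separation one gets for free is that the \emph{outlet} of $\mathsf{B}$ separates the set of dead ends from the rest of $S\setminus x$; but the dead ends stay connected among themselves, and in $S$ this separation fails as soon as $x$ is joined to any dead end --- exactly the case you leave to an unspecified application of Proposition~\ref{razval2} or Corollary~\ref{after2}.

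The paper does not attempt a direct cut-vertex argument. Its proof hinges on a rigidity statement, Proposition~\ref{5is5}: if a decomposition of $S\setminus x$ contains such a block $\mathsf{B}$ (outlet $v$, dead ends $v_1,\dots,v_k$) together with a neighbour $t$ of $v$, then for \emph{every} $u\notin\langle\mathsf{B},t\rangle$ and \emph{every} decomposition of $S\setminus u$, the vertices $v,v_1,\dots,v_k$ again form a block of the same type; in particular no neighbour $t$ of the outlet is joined to any dead end. Only with this rigidity in hand does the paper repeat the argument of~\cite[Lemma~5.3]{FST1}. The persistence of $\mathsf{B}$ across different deletions is precisely what lets one control the attachment of $x$ and choose the right vertex to cut at; your proposal omits this ingredient, and the ``bookkeeping'' you anticipate cannot be carried out without it.
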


To prove the lemma we use the following proposition.

\begin{prop}
\label{5is5}
Suppose that $S\setminus x$ contains a subdiagram $S_1$ consisting of a block ${\mathsf{B}}$ of type ${\rm{V}}$ (or ${\t{\mr{V}}_1}$, ${\t{\mr{V}}_2}$, ${\t{\mr{V}}_{12}}$) with outlet $v$ and dead ends $v_1,\dots,v_k$, and a vertex $t$ joined with $v$ (and probably with some of $v_i$). Then for any $u\in S\setminus S_1$
and any block decomposition of $S\setminus u$ a subdiagram $\l v,v_1,\dots,v_k\r$ is contained in one block of type ${\rm{V}}$ (or ${\t{\mr{V}}_1}$, ${\t{\mr{V}}_2}$, and ${\t{\mr{V}}_{12}}$ respectively). In particular, $t$ does not attach to any of $v_i$, $i=1,\dots,k$.

\end{prop}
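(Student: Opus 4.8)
The plan is to adapt, block by block, the argument used for the analogous statement in~\cite{FST1}. Since $|S|\ge 8>|S_1|$ we may fix some $u\in S\setminus S_1$; as $S$ is minimal non-decomposable, $S\setminus u$ is s-decomposable, so we may also fix an arbitrary block decomposition $\mathcal D$ of $S\setminus u$. Because $u\notin S_1$, all of $v,v_1,\dots,v_k$ and $t$ lie in $S\setminus u$, and since $\l v,v_1,\dots,v_k\r$ is an induced subdiagram of $S$ it is precisely the block $\mathsf B$; hence in $S$ the weights and orientations of all edges among $v,v_1,\dots,v_k$ are exactly those of $\mathsf B$, each $v_i$ has small valence, and the only neighbours of $v_i$ inside $S_1$ are some of $v,v_1,\dots,v_k$ together with (a priori) $t$. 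The heart of the argument is to examine, for each $i$, the block $\mathsf B_i\in\mathcal D$ containing $v_i$: the valence of $v_i$ and the fact that at most one of the edges incident to $v_i$ is non-simple severely restrict both the type of $\mathsf B_i$ and the position of $v_i$ in it.

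Next I would run through the finite list of possible types for $\mathsf B_i$. Most branches are excluded at once: either $\mathsf B_i$ would force an edge at $v_i$ of the wrong weight or an edge from $v_i$ to a vertex outside $\{v,v_1,\dots,v_k\}$ (in particular to $t$), or two edges meeting at an outlet of a $\t{\mr{V}}$-type block would be forced to be oriented consistently while $\l v_i,v\r$ and $\l v_i,v_j\r$ are not. In each remaining branch one obtains a splitting of $S$ to which Proposition~\ref{razval1}, Proposition~\ref{razval2} or Corollary~\ref{after2} applies, yielding that $S$ is s-decomposable and contradicting minimality. What survives is that $\mathsf B_i$ is a block of type ${\rm{V}}$ (resp. $\t{\mr{V}}_1$, $\t{\mr{V}}_2$, $\t{\mr{V}}_{12}$) on the vertex set $\{v,v_1,\dots,v_k\}$; since this holds for every $i$, the subdiagram $\l v,v_1,\dots,v_k\r$ is a single block of the asserted type in $\mathcal D$.

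The ``in particular'' assertion is then immediate: the $v_i$ are dead ends of that block, so no edge leaves it at $v_i$, whence the only neighbours of $v_i$ in $S\setminus u$ — and therefore in $S$, since $t\ne u$ — lie in $\{v,v_1,\dots,v_k\}$; in particular no $v_i$ is joined to $t$. The only genuine difficulty is the length of the case check: compared with~\cite{FST1} there are now the additional candidates $\t{\mr{III}}$, $\t{\mr{IV}}$, $\t{\mr{V}}_1$, $\t{\mr{V}}_2$, $\t{\mr{V}}_{12}$, $\t{\mr{VI}}$ to discard for $\mathsf B_i$, but each is dealt with by the same weight-and-orientation bookkeeping; the complete list of these refinements is recorded in the remainder of this appendix, and I expect no conceptual obstacle beyond it.
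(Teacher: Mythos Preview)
Your outline takes a different route from the paper's. The paper pivots on the \emph{outlet} $v$, not on the dead ends $v_i$: since $t$ is joined to $v$, the valence of $v$ in $S_2=S\setminus u$ is at least $k+1$, and together with the count of weight-$2$ edges at $v$ this forces $v$ to lie in two blocks $\mathsf B_1,\mathsf B_2$, at least one of which is already of type $\mr V$ or $\mr{IV}$ (or a tilde variant). Ruling out the $\mr{IV}/\t{\mr{IV}}$ option is then a short link computation: $L_{S_2}(v)$ splits as the dead-end part of $\mathsf B_1$ disjoint from $\mathsf B_2\setminus v$, and one checks that the specific degree/weight pattern that $\mathsf B$ forces on $v_1,\dots,v_k$ can only be matched on the $\mathsf B_1$ side, so $\l v,v_1,\dots,v_k\r$ is exactly $\mathsf B_1$.

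Your approach through the $v_i$ has genuine gaps as written. First, the claim that at most one edge at $v_i$ is non-simple is false for the new blocks: in $\t{\mr V}_1$ (and $\t{\mr V}_2$) one dead end carries three weight-$2$ edges, and in $\t{\mr V}_{12}$ each dead end carries a weight-$2$ and a weight-$4$ edge, so the restriction you invoke does not hold. Second, you have not accounted for a possible edge $\l v_i,x\r$: the $v_i$ are dead ends only in the given decomposition of $S\setminus x$, so in $S$ (hence in $S\setminus u$ when $u\ne x$) each $v_i$ may acquire one extra neighbour of unknown weight, which spoils the ``small valence'' bookkeeping you rely on. Third, speaking of \emph{the} block $\mathsf B_i$ containing $v_i$ presupposes that $v_i$ is a dead end in $\mathcal D$, which is part of what must be proved; and even granting a $\mr V$-type block at each $v_i$ you still owe an argument that these blocks coincide. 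The paper's choice to argue from $v$ sidesteps all three issues at once, because the high valence of $v$ immediately isolates the relevant block.
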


\begin{proof}
Take any $u\in S\setminus S_1$ and consider any block decomposition of $S_2=S\setminus u$. Since valence of $v$ in $S_2$ is at least $k+1$ (and $v$ is contained in at least $4-k$ edges of weight $2$), $v$ is contained in exactly two blocks ${\mathsf{B}}_1$ and ${\mathsf{B}}_2$, at least one of which is of the type ${\rm{V}}$ or ${\rm{IV}}$ (or ${\t{\mr{V}}_1}$, ${\t{\mr{V}}_2}$, ${\t{\mr{V}}_{12}}$, ${\t{\mr{IV}}}$). Suppose that none of ${\mathsf{B}}_1$ and ${\mathsf{B}}_2$ is of the type ${\rm{V}}$ (or ${\t{\mr{V}}_1}$, ${\t{\mr{V}}_2}$, ${\t{\mr{V}}_{12}}$ respectively; notice that because of valence and orientation of edges, the types can not mix), and let ${\mathsf{B}}_1$ be of the type ${\rm{IV}}$ (or ${\t{\mr{IV}}}$). Then for any choice of ${\mathsf{B}}_2$ we have the following:\\

$-$ the number of vertices of $S_2$ which are neighbors of $v$ and  have valence at least three in $S_2$ does not exceed $3$, which means ${\mathsf{B}}$ is not of type ${\rm{V}}$;

$-$ no neighbor of $v$ in $S_2$ is incident to three edges of weight $2$ only, which means ${\mathsf{B}}$ is not of type ${\t{\mr{V}}_1}$ and ${\t{\mr{V}}_2}$;

$-$ no neighbor of $v$ in $S_2$ is incident to one edge of weight $2$ and one double edge only, which means ${\mathsf{B}}$ is not of type ${\t{\mr{V}}_{12}}$.

The contradiction implies that we may assume ${\mathsf{B}}_1$ to be of the type ${\rm{V}}$ (or ${\t{\mr{V}}_1}$, ${\t{\mr{V}}_2}$, and ${\t{\mr{V}}_{12}}$ respectively) with outlet $v$. Now consider four types of blocks separately.

If block ${\mathsf{B}}$ of $S\setminus x$ is of type ${\rm{V}}$ (and so is ${\mathsf{B}}_1$), then the proof repeats the proof of Proposition~5.4 from~\cite{FST1}.

If block ${\mathsf{B}}$ of $S\setminus x$ is of type ${\t{\mr{V}}_{1}}$ or ${\t{\mr{V}}_{2}}$, then the subdiagram $\l v_1,v_2,v_3\r\subset S$ consists of two edges of weight $2$.
At the same time, the link $L_{S_2}(v)$ is a disjoint union of a diagram composed of two edges of weight $2$ only having a vertex in common (composed by dead ends of ${\mathsf{B}}_1$) and another diagram with at most $4$ vertices (composed by vertices of ${\mathsf{B}}_2\setminus v$). If we assume that $v_1,v_2,v_3$ are not contained in one block (${\mathsf{B}}_1$ or ${\mathsf{B}}_2$) in $S_2$, then we come to a contradiction. Clearly, the only block with a subdiagram consisting of two edges of weight $2$ (and nothing else) is of type ${\t{\mr{V}}_{1}}$ or ${\t{\mr{V}}_{2}}$.

Finally, suppose that block ${\mathsf{B}}$ of $S\setminus x$ is of type ${\t{\mr{V}}_{12}}$. Again, consider the subdiagram $\l v_1,v_2\r\subset S$, it consists of a double edge. Since ${\mathsf{B}}_1$ is also of type ${\t{\mr{V}}_{12}}$, the link $L_{S_2}(v)$ is a disjoint union of a double edge and another diagram with at most $4$ vertices. No link of outlet contains a double edge except block of type ${\t{\mr{V}}_{12}}$, so we complete the proof.

\end{proof}

Now the proof of the lemma repeats the proof of Lemma~5.3 from~\cite{FST1}.

\begin{lemma}
\label{no4}
For any $x\in S$ no block decomposition of $S\setminus x$ contains blocks of type ${\rm{IV}}$.

\end{lemma}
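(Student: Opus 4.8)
The argument is essentially that of the corresponding lemma in~\cite[Section~5]{FST1}, with the case analysis enlarged to accommodate the new blocks. Assume, for contradiction, that for some $x\in S$ a block decomposition of $S\setminus x$ contains a block $\mathsf{B}$ of type $\mr{IV}$; write its two outlets as $v_1,v_2$ and its two dead ends as $v_3,v_4$. Since $S$ is non-decomposable while $S\setminus x$ is s-decomposable, the whole obstruction lies in the edges joining $x$ to $S\setminus x$, and the plan is to exhibit a block decomposition of $S$ nonetheless, contradicting non-decomposability of $S$. Throughout, Lemma~\ref{no5} guarantees that no block decomposition arising along the way contains a block of type $\mr{V}$, $\t{\mr{V}}_1$, $\t{\mr{V}}_2$ or $\t{\mr{V}}_{12}$, so these never have to be considered as complementary blocks.

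First I would dispose of the case in which $x$ is joined to a dead end of $\mathsf{B}$, say $v_3$. In $S\setminus x$ a dead end of $\mathsf{B}$ is adjacent only to vertices of $\mathsf{B}$, so in $S$ the vertex $v_3$ has valence at most three, with $x$ its only new neighbour. Examining the few possibilities for the edge $\langle x,v_3\rangle$ (together with the edges, if any, from $x$ to $v_4,v_1,v_2$), one checks directly that $\langle\mathsf{B},x\rangle$ is s-decomposable with a decomposition in which $v_1$ and $v_2$ are still outlets; since every block of the decomposition of $S\setminus x$ other than $\mathsf{B}$ meets $\mathsf{B}$ only at $v_1$ or $v_2$, these blocks can be re-glued to the new decomposition of $\langle\mathsf{B},x\rangle$, yielding a block decomposition of $S$ --- a contradiction. (Any re-decomposition that would be forced to produce a block of type $\mr{V}$, $\t{\mr{V}}_1$, $\t{\mr{V}}_2$ or $\t{\mr{V}}_{12}$ contradicts Lemma~\ref{no5} directly.)

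It remains to treat the case in which $x$ is joined only to the outlets $v_1,v_2$ of $\mathsf{B}$ and to vertices of $S\setminus\mathsf{B}$. Here the idea is to produce a cut vertex $b$, or a cut pair $b_1,b_2$, to which Proposition~\ref{razval1}, Proposition~\ref{razval2} or Corollary~\ref{after2} applies; the hypothesis $|S|\ge 8$ ensures that the resulting pieces are large enough for those statements. Since the dead ends $v_3,v_4$ have valence at most two in $S$, after removing $v_1$ and $v_2$ (or one of them together with a suitable separating vertex) one presents $S$ as $\langle S_1,b_1,b_2,S_2\rangle$ with $S_1\perp S_2$, where $S_1$ is contained in $\{v_3,v_4\}$ and $S_2$ carries the remaining $\ge 3$ vertices; hypotheses $(0)$--$(3)$ of Proposition~\ref{razval1}, respectively $(0)$--$(2)$ of Proposition~\ref{razval2}, follow from a routine count of valences and simple leaves, with Corollary~\ref{after2} covering the borderline situations where $(2)$ fails. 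In every configuration one concludes that $S$ is s-decomposable, a contradiction; the handful of configurations in which no cut of the required form is available are disposed of by direct inspection of the (then necessarily small) diagram $S\setminus x$.

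The main obstacle, as in the skew-symmetric case, is the proliferation of sub-cases in the last step: one must run through all the ways the complementary blocks attached at $v_1,v_2$ and the block(s) carrying $x$ can interlock and verify in each that a suitable cut vertex or cut pair exists. The one new feature beyond~\cite{FST1} is that a complementary block glued to $\mathsf{B}$ along an edge may now additionally be of the small new type $\mr{\t{III}}$ or $\t{\mr{IV}}$; in the valence bookkeeping these behave just like the classical small blocks $\mr{I}$ and $\mr{II}$, the only difference being that replacing a block of type $\t{\mr{IV}}$ drops a vertex while replacing one of type $\mr{\t{III}}$ does not, so each sub-case of~\cite{FST1} splits into at most two nearly identical ones treated by the same argument. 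Combined with the exclusion of $\t{\mr{V}}_1,\t{\mr{V}}_2,\t{\mr{V}}_{12}$ by Lemma~\ref{no5}, this keeps the case list finite and of essentially the length of the corresponding proof in~\cite{FST1}.
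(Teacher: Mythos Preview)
Your proposal is correct in spirit and matches the paper's approach: the paper's entire proof is the single sentence ``The proof repeats the proof of Lemma~5.5 from~\cite{FST1}.'' In particular, the paper does not even record any enlargement of the case analysis; its position is that once Propositions~\ref{razval1}, \ref{razval2} and Corollary~\ref{after2} have been upgraded to allow the new blocks (which was done earlier in the appendix), the argument of \cite[Lemma~5.5]{FST1} runs verbatim, so your remarks about extra sub-cases for complementary blocks of type $\mr{\t{III}}$ or $\t{\mr{IV}}$ are already absorbed into those updated tools rather than into this lemma's proof.
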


The proof repeats the proof of Lemma~5.5 from~\cite{FST1}.

\begin{cor}
\label{valle4}
Valence of any vertex $v$ of a minimal non-decomposable diagram  $S$ does not exceed $4$.

\end{cor}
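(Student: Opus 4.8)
The plan is to argue by contradiction, reducing the statement to the list of admissible blocks established in Lemmas~\ref{no5} and~\ref{no4}. Suppose some vertex $v\in S$ has valence at least $5$. The first step is to pass to a subdiagram $S\setminus u$, $u\ne v$, in which $v$ still has valence at least $5$: if $v$ has a non-neighbour in $S$ other than itself, take $u$ to be such a vertex, so the valence of $v$ is unaffected; otherwise $v$ is joined to all of the remaining $|S|-1\ge 7$ vertices, and then for an arbitrary $u\ne v$ the valence of $v$ in $S\setminus u$ equals $|S|-2\ge 6$. In either case, by minimality of $S$ the diagram $S\setminus u$ is s-decomposable (it need not be connected, but Lemmas~\ref{no5}, \ref{no4} and the notion of s-decomposability apply to disconnected diagrams as well) and it has a vertex of valence at least $5$.

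The second step is to bound the valence of every vertex of an s-decomposable diagram whose block decomposition avoids the blocks of types $\mr{IV}$, $\mr{V}$, $\t{\mr{V}}_1$, $\t{\mr{V}}_2$, $\t{\mr{V}}_{12}$. Fix a block decomposition of $S\setminus u$; by Lemmas~\ref{no5} and~\ref{no4} it uses only blocks of types $\mr{I}$, $\mr{II}$, $\mr{IIIa}$, $\mr{IIIb}$, $\mr{\t{III}a}$, $\mr{\t{III}b}$, $\t{\mr{IV}}$, and $\t{\mr{VI}}$. Inspecting these finitely many block types one checks that within each of them every vertex has valence at most $2$, the only exception being blocks of type $\t{\mr{VI}}$, which may contain vertices of valence $3$ or $4$ but which have no outlets at all. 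Combining this with the gluing rules --- a dead end lies in a unique block, an outlet in at most two --- yields that every vertex of $S\setminus u$ has valence at most $4$: a vertex lying in a $\t{\mr{VI}}$-block is a dead end, hence lies in that block only, so its valence is at most $4$; any other vertex lies in at most two blocks each contributing at most $2$.

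This contradicts the conclusion of the first step, so no vertex of $S$ can have valence exceeding $4$. The only genuine content is the block-by-block inspection in the second step, and it is entirely routine once Lemmas~\ref{no5} and~\ref{no4} have removed the large blocks; the choice of $u$ in the first step and the bookkeeping ``at most $2$ per block, at most $2$ blocks per outlet'' are formal. I do not anticipate a serious obstacle here.
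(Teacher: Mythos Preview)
Your proof is correct and is exactly the argument the paper has in mind: the corollary is stated without proof precisely because it follows immediately from Lemmas~\ref{no5} and~\ref{no4} by the block-by-block count you carry out (each remaining block except $\t{\mr{VI}}$ has all vertices of valence at most $2$, a vertex lies in at most two blocks, and $\t{\mr{VI}}$ has no outlets with maximal valence $4$). Your care in choosing $u$ so that $v$ keeps valence $\ge 5$ in $S\setminus u$ is the only point requiring a sentence, and you handle it cleanly.
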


\smallskip

Consider now a block of type $\t{\rm{IV}}$. We will prove its absence in decompositions of subdiagrams of $S$ in two steps.

\begin{prop}
\label{no3in4t}
Suppose that some block decomposition of $S\setminus x$ contains blocks of type $\t{\rm{IV}}$ with outlet $v$ and dead ends $v_1, v_2$. Then

$(a)$ $x\perp v$;

$(b)$ for any $u\in S\setminus\l v,v_1,v_2,x\r$ and any decomposition of $S\setminus u$ vertices $\l v,v_1,v_2\r$ form a block of type $\t{\rm{IV}}$;

$(c)$ for any $u\in S\setminus\l v,v_1,v_2\r$ either $u\perp v_1$ or $u\perp v_2$.

\end{prop}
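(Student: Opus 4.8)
The statement is a rigidity lemma for the block $\t{\mr{IV}}$, of the same kind as the rigidity results for blocks of type $\mr{V}$ in~\cite{FST1} (cf.\ Proposition~\ref{5is5}), and I would prove it by the ``re-decomposition'' method used throughout this appendix. Fix once and for all the given decomposition of $S\setminus x$ in which $\{v,v_1,v_2\}$ spans a block ${\mathsf B}$ of type $\t{\mr{IV}}$; recall from Table~\ref{reg} that ${\mathsf B}$ is a triangle on these three vertices with one edge of weight $1$ and two of weight $2$, and with $v$ its unique outlet. By the standing hypotheses of this section $S$ is non-decomposable, every proper subdiagram of $S$ is s-decomposable (Corollary~\ref{sub}), $|S|\ge 8$, and, by Lemmas~\ref{no5} and~\ref{no4}, no decomposition of any diagram $S\setminus u$ uses a block of type $\mr{IV}$, $\mr{V}$, $\t{\mr{V}}_1$, $\t{\mr{V}}_2$ or $\t{\mr{V}}_{12}$; hence the only block types ever occurring are $\mr{I}$, $\mr{II}$, $\mr{III}a$, $\mr{III}b$, $\mr{\t{III}a}$, $\mr{\t{III}b}$, $\t{\mr{IV}}$ and single vertices, and every vertex of $S$ has valence at most $4$ (Corollary~\ref{valle4}). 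Mutation-finiteness of $S$ is never used.

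The observation underlying all three parts is that $v_1$ and $v_2$ are dead ends of ${\mathsf B}$, so in $S\setminus x$ neither is adjacent to any vertex outside $\{v,v_1,v_2\}$; therefore in $S$ the only vertex of $S\setminus\langle v,v_1,v_2\rangle$ that can be adjacent to $v_1$ or to $v_2$ is $x$ itself. Part (c) thus reduces to: $x$ is not joined simultaneously to $v_1$ and to $v_2$. I would argue by contradiction: if it were, then $v_1$ and $v_2$ would have all their neighbours inside $\langle v,v_1,v_2,x\rangle$, so in the s-decomposable diagram $S\setminus v$ the vertex $x$ would separate the triangle $\langle v_1,v_2,x\rangle$ from the remaining $\ge 4$ vertices, and a short inspection of which admissible block can carry the edge pattern forced at $v_1,v_2$ by the two weight-$2$ edges of ${\mathsf B}$ (tracking orientations) rules this out. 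The same enumeration yields (b): for $u\notin\langle v,v_1,v_2,x\rangle$ the vertices $v_1,v_2$ keep in $S\setminus u$ exactly their ${\mathsf B}$-adjacencies together with at most one edge to $x$, so in any decomposition of $S\setminus u$ the subdiagram $\langle v,v_1,v_2\rangle$ is forced---again by running through the block types incident to a weight-$2$ edge, with orientations---to be a $\t{\mr{IV}}$ block with outlet $v$.

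For part (a) I would assume $x\sim v$ and derive an s-decomposition of $S$, contradicting non-decomposability. If moreover $x\perp v_1$ and $x\perp v_2$, then $\langle v_1,v_2\rangle$ is attached to the rest of $S$ only through $v$, so $v$ is a cut vertex of $S$ one side of whose complement has order $2$ and the other order $n-3\ge 5$; Proposition~\ref{razval1} applied with $b=v$ then gives that $S$ is s-decomposable. If instead $x$ is joined to exactly one of $v_1,v_2$ (it cannot be joined to both, by (c)), a more involved analysis is needed, along the lines of Case~4 of Proposition~\ref{razval2}: one removes that dead end, uses Corollary~\ref{sub} on the resulting proper subdiagram, locates the $\mr{\t{III}}$-block carrying $v$ together with the remaining dead end, and feeds the emerging cut configuration into Proposition~\ref{razval2} or Corollary~\ref{after2}, splitting according to the position (the $d$-value) of that dead end inside ${\mathsf B}$. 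In every branch $S$ turns out s-decomposable, a contradiction, so $x\perp v$.

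The real work, exactly as for the parallel statements of~\cite{FST1}, is the bookkeeping: the case distinction over which block from the (now enlarged) list can contain a vertex lying on a weight-$2$ edge, carried out simultaneously with tracking the orientations of the three arrows of ${\mathsf B}$ around $v$, and---in part (a)---checking that the reassembled decomposition really places $v$ at an outlet so that ${\mathsf B}$ can be glued back. These verifications are short but delicate; they are the only content beyond the arguments already present in~\cite{FST1} and in Proposition~\ref{5is5}, and they are precisely what Propositions~\ref{razval1} and~\ref{razval2} and Corollary~\ref{after2} are designed to absorb.
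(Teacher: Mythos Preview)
Your proof rests on a misreading of the block $\t{\mr{IV}}$: it has \emph{two} outlets (the endpoints of the weight-$1$ edge) and a \emph{single} dead end (the vertex incident to both weight-$2$ edges), not the other way around. This is visible from Table~\ref{newblocks} and Table~\ref{reg} together with Definition~\ref{def-simpl} (the outlet indices are exactly those with $d_i=1$), and is stated explicitly in Case~3 of Proposition~\ref{razval2} (``a unique dead end of ${\mathsf{B}}$'') and in Lemma~\ref{no4t} (``$v$ its dead end, and $v_1,v_2$ its outlets''). The phrase ``outlet $v$ and dead ends $v_1,v_2$'' in the statement is a slip that you have propagated into your proof.

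With the correct labelling the whole strategy inverts. Your ``observation underlying all three parts''---that $v_1,v_2$ are dead ends and so can only be joined to $x$ outside the block---is false: $v_1,v_2$ are outlets and may have many neighbours in $S$. Hence your reduction of (c) to ``$x$ is not joined to both $v_1$ and $v_2$'' collapses; indeed (c) is the hardest part, and the paper spends most of the proof on it, using (b) to force the $\t{\mr{IV}}$ block in $S\setminus y$ and then exhausting the ways a vertex $u$ can be simultaneously attached to both outlets via further blocks, repeatedly feeding the resulting configurations into Propositions~\ref{razval1} and~\ref{razval2}. Likewise your argument for (b) assumes $v_1,v_2$ retain fixed adjacencies, which they need not; what actually persists is that the dead end $v$ keeps exactly its two weight-$2$ edges (this uses (a)), and one checks that any decomposition of $S\setminus u$ accommodating such a $v$ other than as the dead end of a $\t{\mr{IV}}$ block yields an outlet-free component, whence $S$ is s-decomposable by Proposition~\ref{razval1}. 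Part (a) is then the analysis of what block can contain $v$ in $S\setminus w$ when $v$ has three incident edges (two of weight $2$ and one to $x$). In short, the difficulty is located at the outlets $v_1,v_2$, not at the dead end, and your plan as written does not address it.
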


\begin{proof}
$(a)$ \ Suppose that $x$ is joined with $v$. Take any vertex $w\in S\setminus\l v,v_1,v_2,x\r$ and consider $S_1=S\setminus w$ with some decomposition. Since $v$ is contained in exactly three edges (and at least two of which are of weight $2$), we see that either $v$ is contained in block of type ${\t{\mr{V}}_1}$, ${\t{\mr{V}}_2}$ or ${\t{\mr{V}}_{12}}$ (which is impossible by Lemma~\ref{no5}), or $v$ is contained in one block of type ${\t{\mr{IV}}}$ and one block of type ${\t{\mr{III}}}$.

Let $\l v_1,v,x\r$ compose a block  of type ${\t{\mr{IV}}}$ in the decomposition of $S_1$, and $\l v_2,v\r$ compose a block  of type ${\t{\mr{III}}}$. Then $v_1$ and $x$ are joined by an edge of weight $2$, and $v_1$ and $v_2$ are not joined in $S_1$ (so, in $S$). This means that some block ${\mathsf{B}}$ of type ${\t{\mr{IV}}}$, ${\rm{II}}$ or ${\rm{I}}$ is glued in $S\setminus x$ along the edge $\l v_1,v_2\r$. Since $v_1$ and $v_2$ are dead ends of their blocks in $S_1=S\setminus w$, the only vertex which can attach to $v_1$ or $v_2$ is $w$.

If ${\mathsf{B}}$ is of type ${\rm{I}}$, then $S$ is s-decomposable by Proposition~\ref{razval1} applied to $x$. If ${\mathsf{B}}$ is of type $\t{\rm{IV}}$, then $v_1$ is contained in at least three edges of weight $2$, so in any decomposition of $S\setminus v_2$ it should be contained in block of of type ${\t{\mr{V}}_1}$, ${\t{\mr{V}}_2}$ or ${\t{\mr{V}}_{12}}$, which contradicts Lemma~\ref{no5}. Therefore, ${\mathsf{B}}$ is of type ${\rm{II}}$.


Now consider the diagram $S\setminus v_2$ with some decomposition. Vertex $v_1$ is contained in exactly three edges, two of them are of weight $2$. Due to orientations,
$\l w,v,v_1\r$ must compose a block of type $\t{\rm{IV}}$, which is impossible since $w$ is not joined with $v$.

\smallskip
\noindent
$(b)$ \ According to $(a)$, $v$ is incident to exactly two edges, each of them is of weight $2$. Suppose that $\l v,v_1,v_2\r$ do not compose a block of type $\t{\rm{IV}}$. Then $v$ is contained either in two blocks of type $\t{\rm{III}}$, or in two blocks of type $\t{\rm{IV}}$ glued along simple edge. In the both cases the union of these two blocks has no outlets, so $S$ is s-decomposable by Proposition~\ref{razval1} applied to $u$.

\smallskip
\noindent
$(c)$ \ Suppose that some $u\in S\setminus\l v,v_1,v_2\r$ is joined with both $v_1$ and $v_2$. Since $|S|\ge 8$, valence of any vertex does not exceed $4$, and both $v_1$ and $v_2$ are joined with $u$, there exist at least two vertices ($y$ and $z$) which are not joined with any of $v_1$ and $v_2$. Consider $S\setminus y$ with some decomposition. Due to $(b)$, vertices $\l v,v_1,v_2\r$ compose a block ${\mathsf{B}}$ of type $\t{\rm{IV}}$ with outlet $v$. Notice also that $y$ is not attached to $\l v,v_1,v_2\r$.

Consider all cases to join $u$ with $v_1$ and $v_2$ by attaching different blocks to $v_1$ and $v_2$. First, suppose $\l u,v_1,v_2\r$ belong to one block ${\mathsf{B}}_1$. Then
the diagram $\l u,v_1,v_2,v\r$ either has no outlets (if ${\mathsf{B}}_1$ is of type $\t{\rm{IV}}$) or has a unique outlet $u$ (if ${\mathsf{B}}_1$ is of type ${\rm{II}}$). This implies that $S$ is s-decomposable by Proposition~\ref{razval1} applied to $y$ or $u$ respectively.

Now suppose that edges $\l u,v_1\r$ and $\l u,v_2\r$ belong to distinct blocks ${\mathsf{B}}_1$ and ${\mathsf{B}}_2$. These blocks can be of types $\t{\rm{IV}}$, ${\rm{I}}$ or ${\rm{II}}$.
It is easy to see that for two different pairs of blocks $({\mathsf{B}}_1,{\mathsf{B}}_2)$ and $({\mathsf{B}}_1',{\mathsf{B}}_2')$ the pairs of links $(L_S(v_1),L_S(v_2))$ of $v_1$ and $v_2$ will be different, and the union of these two links will contain all the vertices of ${\mathsf{B}}_1$ and ${\mathsf{B}}_2$ distinct from $v_1$ and $v_2$. This means that decompositions of the union of ${\mathsf{B}}$, ${\mathsf{B}}_1$ and ${\mathsf{B}}_2$ into blocks in $S\setminus z$ will be the same as in $S\setminus y$. In particular, if we have proved that no vertex except $y$ is not joined with some vertex of the union of blocks, then $y$ is not joined with that vertex either.

If one of ${\mathsf{B}}_1$ and ${\mathsf{B}}_2$ (say, ${\mathsf{B}}_1$) is of the first type, then the union of blocks ${\mathsf{B}}$, ${\mathsf{B}}_1$ and ${\mathsf{B}}_2$ contains at most $5$ vertices and has at most one outlet, so $S$ is either s-decomposable by Proposition~\ref{razval1} applied to the outlet of ${\mathsf{B}}_2$ (if any) or disconnected (otherwise). If one of ${\mathsf{B}}_1$ and ${\mathsf{B}}_2$ (say, ${\mathsf{B}}_1$) is of type $\t{\rm{IV}}$, then the union of blocks ${\mathsf{B}}$, ${\mathsf{B}}_1$ and ${\mathsf{B}}_2$ contains at most $6$ vertices and has at most one outlet, so, again, $S$ is s-decomposable by Proposition~\ref{razval1} applied to the outlet of ${\mathsf{B}}_2$ (if any) or disconnected (otherwise). Therefore, we can assume that both ${\mathsf{B}}_1$ and ${\mathsf{B}}_2$ are of second type. Moreover, we can assume that they have the only common vertex $u$, otherwise $S$ is disconnected as above.

If $|S|=8$ (i.e., there are exactly two vertices not contained in the union of ${\mathsf{B}}$, ${\mathsf{B}}_1$ and ${\mathsf{B}}_2$), then a short direct check shows that $S$ is either s-decomposable or contains a mutation-infinite subdiagram. So, assume that $|S|\ge 9$, and denote by $w_1$ and $w_2$ the remaining vertices of ${\mathsf{B}}_1$ and ${\mathsf{B}}_2$.
If $w_1$ and $w_2$ are not joined in $S$, then $S$ is s-decomposable by Proposition~\ref{razval2} applied to $S=\left\l S_1=\l v,v_1,v_2,u\r,b_1=w_1,b_2=w_2\right.$, $\left.S_2=S\setminus\l S_1,w_1,w_2\r\right\r$. Thus, there exists some block ${\mathsf{B}}_3$ of type $\t{\rm{IV}}$, ${\rm{I}}$ or ${\rm{II}}$ containing vertices $w_1$ and $w_2$. In the first two cases the union of four blocks has no outlets, so $S$ is disconnected. Therefore, ${\mathsf{B}}_3$ is of second type. Denote by $w$ its remaining vertex.


Since valence of any vertex in $S$ does not exceed four, $y$ is not joined with any of $w_1, w_2$, so $S$ is s-decomposable by Proposition~\ref{razval1} applied to $w$.

\end{proof}

\begin{lemma}
\label{no4t}
For any $x\in S$ no block decomposition of $S\setminus x$ contains blocks of type $\t{\rm{IV}}$.

\end{lemma}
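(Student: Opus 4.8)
The plan is to argue by contradiction in the usual setting of Appendix~A, where $S$ is a minimal non-decomposable diagram with $|S|\ge 8$ (and the mutation class of $S$ is not assumed to be finite). Assume some block decomposition of $S\setminus x$ contains a block $\mathsf{B}$ of type $\t{\mr{IV}}$, with outlet $v$ and dead ends $v_1,v_2$. I will deduce that $S$ is s-decomposable, which contradicts non-decomposability of $S$; hence no such $x$ can exist. The main input will be Proposition~\ref{no3in4t}, whose conclusions already pin down the relevant local structure, together with Proposition~\ref{razval1}.

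First I would collect the neighborhood information available from Proposition~\ref{no3in4t}. By part~(a), $x\perp v$, and, as established in the course of proving that proposition (using $x\perp v$ together with the valence bound of Corollary~\ref{valle4} and the absence from decompositions of subdiagrams of $S$ of blocks of types $\mr{IV}$, Lemma~\ref{no4}, and $\mr{V},\t{\mr{V}}_1,\t{\mr{V}}_2,\t{\mr{V}}_{12}$, Lemma~\ref{no5}), the vertex $v$ is incident to exactly the two weight-$2$ edges $\l v,v_1\r$ and $\l v,v_2\r$, so $N_S(v)=\{v_1,v_2\}$. Moreover $v_1$ and $v_2$ are dead ends of $\mathsf{B}$, hence belong to no other block of the fixed decomposition of $S\setminus x$; therefore their only neighbors in $S\setminus x$ are vertices of $\mathsf{B}$, which gives $N_S(v_1)\subseteq\{v,v_2,x\}$ and $N_S(v_2)\subseteq\{v,v_1,x\}$ (fully consistent with Proposition~\ref{no3in4t}(c)).

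Next I would observe that these inclusions make $\{v,v_1,v_2\}$ a whole connected component of $S\setminus x$: in $S\setminus x$ the vertex $v$ is joined only to $v_1,v_2$, each of $v_1,v_2$ is joined only to vertices of $\{v,v_1,v_2\}$, and the induced subdiagram on these three vertices is the connected block $\mathsf{B}$. In particular $S\setminus x$ is disconnected, with one component of size $3$ and further components containing $|S|-4$ vertices in total; since $|S|\ge 8$, every component of $S\setminus x$ has at most $|S|-3$ vertices. Thus the vertex $b:=x$ satisfies all the hypotheses of Proposition~\ref{razval1}: $S\setminus x$ is disconnected; $S\setminus u$ is s-decomposable for every $u\in S$ because $S$ is minimal non-decomposable; some component of $S\setminus x$ has at least $3$ vertices; and each component has at most $|S|-3$ vertices. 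Proposition~\ref{razval1} then yields that $S$ is s-decomposable — the desired contradiction, which proves the lemma.

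The step that needs the most care is the equality $N_S(v)=\{v_1,v_2\}$, i.e. ruling out any additional neighbor of the outlet $v$; this is precisely where one must invoke the global structure results of this section rather than purely local reasoning, and it is already carried out inside the proof of Proposition~\ref{no3in4t}. Once that is in place, the remainder is the short bookkeeping above feeding into Proposition~\ref{razval1}.
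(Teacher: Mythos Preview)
Your argument rests on the claim that $v_1,v_2$ are \emph{dead ends} of the block $\t{\mr{IV}}$, and hence have no neighbors in $S\setminus x$ outside of $\{v,v_1,v_2\}$. This is where the proof breaks. The block $\t{\mr{IV}}$ has \emph{two outlets} and \emph{one dead end}: from the matrix in Table~\ref{reg} (and the symmetrizer $d_1=d_2=1$, $d_3=2$), the two vertices joined by the simple edge are the outlets, while the vertex incident to both weight-$2$ edges is the unique dead end. (The phrasing in the statement of Proposition~\ref{no3in4t} appears to have the labels swapped; its \emph{proof} and, explicitly, the proof of the present lemma use $v$ for the dead end and $v_1,v_2$ for the outlets.) Since $v_1,v_2$ are outlets, further blocks can be glued at them in the decomposition of $S\setminus x$, and they may well have neighbors outside $\{v,v_1,v_2\}$. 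Consequently $\{v,v_1,v_2\}$ need not be a connected component of $S\setminus x$, and your application of Proposition~\ref{razval1} at $b=x$ is unjustified.

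Proposition~\ref{no3in4t} only gives you that $N_S(v)=\{v_1,v_2\}$ (part~(a), together with the argument in~(b)) and that no single vertex $u\notin\{v,v_1,v_2\}$ is joined with \emph{both} $v_1$ and $v_2$ (part~(c)); it does not bound the neighborhoods of $v_1,v_2$ separately. The paper's proof proceeds differently: it passes to a decomposition of $S\setminus v$, shows (using part~(c) to pin down the edge $\l v_1,v_2\r$) that this edge must form a block of type~$\mr{I}$, and then reinserts $\mathsf{B}$ to decompose $S$. The cases where $\l v_1,v_2\r$ sits in a larger block~$\mathsf{B}_1$ of type~$\mr{II}$ or~$\mr{III}$ are ruled out via Propositions~\ref{razval1} and~\ref{razval2}. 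You will need to carry out a comparable analysis; the shortcut via disconnection of $S\setminus x$ is not available.
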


\begin{proof}
Let ${\mathsf{B}}$ be a block of type $\t{\rm{IV}}$ in the decomposition of $S\setminus x$, denote by $v$ its dead end, and by $v_1,v_2$ its outlets. We can also assume that $x$ is not joined with any of $v_1$ and $v_2$. By Proposition~\ref{no3in4t}(c), $v_1$ and $v_2$ are joined by a simple edge in $S$. Indeed, if some block ${\mathsf{B}}'$ is glued to ${\mathsf{B}}$ along the edge $(v_1,v_2)$, then either $S$ is disconnected (if ${\mathsf{B}}'$ is of the first type), or there is a vertex joined with both $v_1$ and $v_2$.

Consider the diagram $S\setminus v$ with any decomposition. Our aim is to prove that the edge $\l v_1,v_2\r$ forms a block of first type. Then, substituting this block by block ${\mathsf{B}}$, we get an s-decomposition of $S$ (due to Proposition~\ref{no3in4t}, $v$ is joined in $S$ with $v_1$ and $v_2$ only).

Suppose that $\l v_1,v_2\r$ belongs to some block ${\mathsf{B}}_1$ containing more than two vertices, i.e. there exists a third vertex $u\in {\mathsf{B}}_1$. According to Proposition~\ref{no3in4t}(c), ${\mathsf{B}}_1$ is either of second or third type.

If ${\mathsf{B}}_1$ is of type ${\rm{III}}$, then $u$ and one of $v_1,v_2$ (say, $v_1$) are dead ends of ${\mathsf{B}}_1$, and the remaining vertex ($v_2$) is outlet. This implies that $S$ is s-decomposable by Proposition~\ref{razval1} applied to $v_2$.

Therefore, ${\mathsf{B}}_1$ is of type ${\rm{II}}$. According to Proposition~\ref{no3in4t}(c), $u$ is not joined with one of $v_1$ and $v_2$ (say, $v_1$), so there is a block ${\mathsf{B}}_2$ glued to ${\mathsf{B}}_1$ along the edge $(u,v_1)$. Clearly, ${\mathsf{B}}_2$ is of type $\t{\rm{IV}}$, ${\rm{I}}$ or ${\rm{II}}$. In the first two cases $S$ is s-decomposable by Proposition~\ref{razval1} applied to $v_2$ (which a unique outlet of the union of ${\mathsf{B}}$, ${\mathsf{B}}_1$ and ${\mathsf{B}}_2$). In the latter case $S$ is s-decomposable by Proposition~\ref{razval2} applied to $S=\left\l S_1=\l v,v_1,u\r,b_1=w,b_2=v_2,S_2=S\setminus\l S_1,w,v_2\r\right\r$, where $w$ is the remaining vertex of ${\mathsf{B}}_2$.

\end{proof}

Finally, we reduce the proof to the skew-symmetric case by proving the following lemma.

\begin{lemma}
\label{no3t}
$S$ contains no edges of weight $2$.

\end{lemma}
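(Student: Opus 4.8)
Here is the plan of proof for \textsf{Lemma~\ref{no3t}}.

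The plan is to argue by contradiction: assuming the diagram $S$ of order $n\ge 8$ (with every proper subdiagram s-decomposable, but $S$ itself non-decomposable) contains an edge $e=\langle a,b\rangle$ of weight $2$, I will deduce that $S$ is s-decomposable. The first step locates $e$ inside a block. For every $x\in S\setminus\{a,b\}$ the diagram $S\setminus x$ is s-decomposable and still contains $e$; since gluing blocks never produces an edge of weight $2$ and the old blocks of types ${\rm I}$--${\rm V}$ are skew-symmetric (hence carry only weight-$1$ edges), $e$ lies inside a single new block, and Lemmas~\ref{no5},~\ref{no4},~\ref{no4t} leave only ${\mr{\t{III}a}}$, ${\mr{\t{III}b}}$ and ${\t{\mr{VI}}}$ as possibilities for that block. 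I dispose of ${\t{\mr{VI}}}$ immediately: having no outlets, such a block occurring in a decomposition of $S\setminus x$ must be a whole orthogonal component of $S\setminus x$; since $|S\setminus x|\ge 7>5$, the diagram $S\setminus x$ is then disconnected, its ${\t{\mr{VI}}}$-component has $5\ge3$ vertices, and every component has at most $n-3$ vertices ($5\le n-3$ and $n-6\le n-3$), so Proposition~\ref{razval1} applied to $x$ shows $S$ is s-decomposable, a contradiction. Hence for every $x\in S\setminus\{a,b\}$ the edge $e$ is the unique edge of a two-vertex block ${\mathsf B}_x=\{a,b\}$ of type ${\mr{\t{III}}}$, whose dead end has valence $1$ in $S\setminus x$.

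The second step reduces the local picture around $\{a,b\}$. By Corollary~\ref{valle4}, $\deg_S a,\deg_S b\le 4$. If $\deg_S a\ge 3$ I pick $x\ne a,b$ not adjacent to $a$ (possible, since at most $\deg_S a-1\le 3$ of the $n-2\ge 6$ vertices other than $a,b$ are neighbours of $a$); then $a$ has valence $\ge 3$ in $S\setminus x$, so $a$ cannot be the dead end of ${\mathsf B}_x$, hence $b$ is, giving $\deg_S b\le 2$. Thus $\min(\deg_S a,\deg_S b)\le 2$ and, relabelling, I may assume $\deg_S a\le 2$. Then at most $4$ vertices different from $a,b$ are neighbours of $a$ or of $b$, so there is $x_0\ne a,b$ adjacent to neither; for that $x_0$ the valences of $a$ and $b$ in $S\setminus x_0$ equal those in $S$, and one of them is $1$ (it is the dead end of ${\mathsf B}_{x_0}$). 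Relabelling once more, I reach the single remaining configuration: $a$ is a leaf of $S$ joined to $b$ by a weight-$2$ edge.

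The final step contradicts non-decomposability in this configuration. The diagram $S\setminus a$ is s-decomposable, connected, of order $n-1\ge 7$. If $b$ is an outlet in some block decomposition of $S\setminus a$, then gluing the block of type ${\mr{\t{III}a}}$ or ${\mr{\t{III}b}}$ (whichever matches the orientation of $e$) along $b$ reproduces $S$, so $S$ is s-decomposable and we are done. Otherwise $b$ is a dead end (or an interior vertex) of every block decomposition of $S\setminus a$; then $S\setminus b$ decomposes as $\{a\}\sqcup C$ with $C=S\setminus\{a,b\}$, $|C|=n-2$, and I would split into the case where $C$ is disconnected (applying Proposition~\ref{razval1} to $b$) and the case where $C$ is connected, in which I analyse the block of a fixed decomposition of $S\setminus a$ that has $b$ as a dead end (it has at most two more vertices) and show that reattaching $a$ either yields an s-decomposition of $S$ directly, or produces a presentation $S=\langle S_1,b_1,b_2,S_2\rangle$ with $b$ in the role of $b_1$ or $b_2$ to which Proposition~\ref{razval2} or Corollary~\ref{after2} applies. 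I expect this matching of the weight-$2$ leaf against the admissible block decompositions of $S\setminus a$ to be the main obstacle; the rest of the argument is short and formal. In every branch $S$ is shown to be s-decomposable, contradicting the hypothesis, so $S$ has no edge of weight $2$.
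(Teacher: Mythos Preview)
Your Steps~1 and~2 follow the paper's argument closely; your disposal of the block~$\t{\mr{VI}}$ is in fact more explicit than the paper, which simply asserts that the weight-$2$ edge must lie in a block of type~$\t{\mr{III}}$. You arrive at the same normalisation: $a=v_1$ is a leaf of~$S$ attached to $b=v_2$ by the weight-$2$ edge.

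The gap is in Step~3. You propose to analyse a decomposition of $S\setminus a$ and, when $b$ fails to be an outlet, to invoke Proposition~\ref{razval2} or Corollary~\ref{after2} with $b$ itself in the role of $b_1$ or $b_2$. There is no natural companion vertex $b_2$ in this picture: the hypothesis requires $b_1\perp b_2$ and a splitting $S=\langle S_1,b_1,b_2,S_2\rangle$ with $S_1\perp S_2$, and nothing in the block structure of $S\setminus a$ produces such a $b_2$. The plan as stated does not go through.

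What the paper does instead is to keep working with $S\setminus x$ for $x\notin\{v_1,v_2\}$ not adjacent to $v_2$, so that the block of type~$\t{\mr{III}}$ on $\{v_1,v_2\}$ is still present. Then $v_2$ is its outlet and lies in at most one further block, necessarily of type ${\rm I}$, ${\rm II}$, ${\rm III}$ or~$\t{\mr{III}}$; this immediately gives $\deg_S v_2\le 3$, a bound you never state. If $\deg_S v_2=2$, Proposition~\ref{razval1} (applied to the unique neighbour of $v_2$ other than $v_1$) finishes. If $\deg_S v_2=3$, that second block must have type ${\rm II}$ or ${\rm III}$ with two further vertices $w_1,w_2$; the type-${\rm III}$ case is killed by Proposition~\ref{razval1}, and in the type-${\rm II}$ case it is \emph{these} two vertices that serve as $b_1,b_2$ in Proposition~\ref{razval2}, with $S_1=\{v_1,v_2\}$. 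The residual sub-case where $w_1$ and $w_2$ are joined in $S$ is then dispatched by finally passing to $S\setminus v_1$ and a short analysis of the blocks through $v_2$. So the correct cut-vertices for Proposition~\ref{razval2} are the \emph{other} neighbours of $b$, not $b$ itself.
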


\begin{proof}
Suppose that $S$ contains an edge $\l v_1,v_2\r$ of weight $2$. For any vertex $x\in S\setminus\l v_1,v_2\r$, the edge $\l v_1,v_2\r$ form a block of type $\t{\rm{III}}$ in any decomposition of $S\setminus x$. In particular, one of $v_1$ and $v_2$ (say, $v_1$) should be a leaf of $S$, and valence of $v_2$ in $S$ does not exceed $3$. If valence of $v_2$ in $S$ equals $2$, then $S$ is s-decomposable by Proposition~\ref{razval1} applied to the vertex attached to $v_2$ distinct from $v_1$, so we assume that valence of $v_2$ in $S$ equals $3$. Take as $x$ any vertex not joined with $v_2$, and consider decomposition of $S\setminus x$.

Denote by ${\mathsf{B}}$ be a block with $3$ vertices containing $v_2$. Clearly, ${\mathsf{B}}$ is of third or second type. In the first case union of $v_1$ and ${\mathsf{B}}$ has no outlets, so $S$ is s-decomposable by Proposition~\ref{razval1} applied to $x$, thus, we assume that ${\mathsf{B}}$ is of type ${\rm{II}}$. Denote the two remaining vertices of ${\mathsf{B}}$ by $w_1$ and $w_2$. We can assume that $w_1$ joined with $w_2$ in $S$, otherwise $S$ is s-decomposable by Proposition~\ref{razval2} applied to $S=\left\l S_1=\l v_1,v_2\r,b_1=w_1,b_2=w_2,S_2=\right.$ $\left.S\setminus\l S_1,w_1,w_2\r\right\r$.

Consider any decomposition of $S\setminus v_1$. If $\l v_2,w_1,w_2\r$ form one block, then $v_2$ is an outlet of the decomposition, and we get a decomposition of $S$ by gluing a block $\l v_1,v_2\r$ of type $\t{\rm{III}}$. Therefore, two blocks meet at $v_2$, one of them (${\mathsf{B}}_1$) contains $w_1$, and the other (${\mathsf{B}}_2$) $w_2$. Moreover, there is a block ${\mathsf{B}}_3$ containing an edge $\l w_1,w_2\r$. Blocks ${\mathsf{B}}_1$ and ${\mathsf{B}}_2$ are simultaneously of first or second type, and ${\mathsf{B}}_3$ is also of one of these two types. Notice that the only outlet of the union of ${\mathsf{B}}_1$, ${\mathsf{B}}_2$ and ${\mathsf{B}}_3$ is the third vertex of ${\mathsf{B}}_3$ (if any). Therefore, either $S$ is disconnected (if ${\mathsf{B}}_3$ is of type ${\rm{I}}$), or $S$ is s-decomposable by Proposition~\ref{razval1} applied to the third vertex of ${\mathsf{B}}_3$ (otherwise).

\end{proof}

Now, reasoning as in the skew-symmetric case, we complete the proof of Theorem~\ref{g8}.


\end{document}